\theoremstyle{plain}
\newtheorem{thm}{Theorem}[section]
\newtheorem{lem}[thm]{Lemma}
\newtheorem{cor}[thm]{Corollary}
\newtheorem{prop}[thm]{Proposition}
\newtheorem{conj}[thm]{Conjecture}
\theoremstyle{definition}
\newtheorem{defn}[thm]{Definition}
\newtheorem{ex}[thm]{Example}
\newtheorem{rmk}[thm]{Remark}
\renewcommand\appendix{
\section*{Appendix}
\gdef\thesection{\Alph{section}}
\setcounter{section}{1}
\newtheorem{appxex}{Example}[section]

\newtheorem{appxprop}[appxex]{Proposition}
\newtheorem{appxrmks}[appxex]{Remarks}
}
\newcommand{\sC}{{\mathcal C}}
\newcommand{\sL}{{\mathcal L}}
\newcommand{\C}{{\mathbb C}}
\newcommand{\F}{{\mathbb F}}
\newcommand{\N}{{\mathbb N}}
\newcommand{\Q}{{\mathbb Q}}
\newcommand{\R}{{\mathbb R}}
\newcommand{\U}{{\mathbb U}}
\newcommand{\Z}{{\mathbb Z}}
\def\NDT{{\fontencoding{T5}\selectfont Nguy\~ \ecircumflex n Duy T\^an}}
\begin{document}
\title[Kernel Unipotent Conjecture and Massey Products over rigid fields]{The kernel unipotent conjecture and the Vanishing of  Massey products  for odd rigid fields}
 \author[J. Min\'a\v{c} and N. D. T\^an (with an appendix by I. Efrat, J. Min\'a\v{c} and N. D. T\^an)]{ J\'an Min\'a\v{c} and \NDT\\
 (with an appendix by Ido Efrat, J\'an Min\'a\v{c} and \NDT)}
  \address{Department of Mathematics, Ben Gurion University of the Negev, P.O.  Box 653, Be'er-Sheva 84105
Israel}
\email{efrat@math.bgu.ac.il }
\address{Department of Mathematics, Western University, London, Ontario, Canada N6A 5B7}
\email{minac@uwo.ca}
 \address{Department of Mathematics, Western University, London, Ontario, Canada N6A 5B7 and Institute of Mathematics, Vietnam Academy of Science and Technology, 18 Hoang Quoc Viet, 10307, Hanoi - Vietnam } 
\email{dnguy25@uwo.ca}
\subjclass[2010]{Primary 12F10, 20F14; Secondary 12G05}
\thanks{IE was supported by the Israel Science Foundation (grant No. 152/13). JM is partially supported  by the Natural Sciences and Engineering Research Council of Canada (NSERC) grant R0370A01. NDT is partially supported  by the National Foundation for Science and Technology Development (NAFOSTED)}
 \begin{abstract}
 A major difficult problem in Galois theory is the characterization of profinite groups which are realizable as absolute Galois groups of fields. Recently the Kernel $n$-Unipotent Conjecture and the Vanishing $n$-Massey Conjecture for $n\geq 3$ were formulated. These conjectures evolved in the last forty years as a byproduct of the application of topological methods to Galois cohomology. We show that both of these conjectures are true for odd rigid fields. This is the first case of a significant family of fields where both of the conjectures are verified besides fields whose Galois groups of $p$-maximal extensions are free pro-$p$-groups.
We also prove the Kernel Unipotent Conjecture for Demushkin groups of rank 2, and establish various filtration results for free pro-$p$-groups, provide examples of pro-$p$-groups which do not have the kernel $n$-unipotent property, compare various Zassenhaus filtrations with the descending $p$-central series and establish new type  of automatic Galois realization.
\end{abstract}
\maketitle
\section{Introduction}
In 1928, W. Krull in \cite{Kr} introduced a topology for a general Galois group. Thus each Galois group is a profinite topological group. If $F$ is a field and $F_{\rm sep}$ is its separable closure, then $G_F={\rm Gal}(F_{\rm sep}/F)$ - the Galois group of $F_{\rm sep}$ over $F$ - is  called the absolute Galois group of $F$. What special properties do  absolute Galois groups have among all profinite groups?
In the classical papers \cite{AS1,AS2} published in 1927, E. Artin and O. Schreier developed a theory of real fields, and they showed in particular that the only non-trivial finite subgroups of absolute Galois groups are groups of order 2. In \cite{Be}, E. Becker developed some parts of Artin-Schreier theory by replacing separable closures of fields by maximal $p$-extensions of fields.
%In \cite{MT1} we explained how two conjectures, Conjectures~\ref{conj:vanishing n-Massey} and \ref{conj:kernel intersection} below, evolved over a number of years.
Conjectures~\ref{conj:vanishing n-Massey} and \ref{conj:kernel intersection} below describe conjecturally rather important special properties of absolute Galois groups and their maximal pro-$p$-quotients. In \cite{MT1} we explained how these conjectures evolved over a number of years.

Let $G$ be a profinite group and let $\F_p$ be a field with $p$ elements considered as a discrete $G$-module with trivial action. For each $i\in \N\cup\{0\}$, let $H^i(G,\F_p)$ be the $i$-th cohomology of $G$  with $\F_p$-coefficients. Let $\alpha_1,\ldots,\alpha_n\in H^1(G,\F_p)$. In Section~\ref{sec:Review Massey}, we recall the definition of an $n$-fold Massey product
\[
 \langle \alpha_1,\ldots,\alpha_n\rangle \subseteq H^2(G,\F_p),
\]
when it is defined. So an $n$-fold Massey product is not always defined, and when it is defined, in general it is not a single-valued function of $\alpha_1,\ldots,\alpha_n$ but it is a subset of $H^2(G,\F_p)$. Nevertheless the significance of $n$-Massey products for Galois theory is considerable. An important theorem is Dwyer's theorem \cite[Theorem 2.4]{Dwy75}, quoted as Theorem~\ref{thm:Dwyer} below in our paper. It shows that certain lifts of unipotent representations are possible if and only if $\langle \alpha_1,\ldots,\alpha_n\rangle$ is defined and $0\in\langle \alpha_1,\ldots,\alpha_n \rangle$.  If this is so, then we say that $\langle \alpha_1,\ldots,\alpha_n\rangle$ vanishes. Assume that a prime $p$ and an integer $n\in \N$ are given. If, for each $\langle \alpha_1,\ldots,\alpha_n \rangle$ which is defined, we have that $\langle \alpha_1,\ldots,\alpha_n\rangle$ vanishes, then we say that $G$ has the vanishing $n$-fold Massey product property with respect to $\F_p$.  

In searching for other significant Galois groups which could play an analogous role as dihedral groups of oder 8, play in the quotients of $G_F(2)$ by its third descending 2-central series, the authors of \cite{GLMS}  found certain groups $G_1$ and $G_2$ of order $32$ and $64$ respectively. In 2004 J. Labute, during his visist to the first author, pointed out the significance of Massey products for Galois theory, and he pointed out two remarkable works \cite{Mo1} and \cite{Vo}. In 2011 the first author, while working with I. Efrat, observed the relevance of Massey products with work in \cite{GLMS}. Then in 2013, I. Efrat \cite{Ef} investigated a property of profinite groups, what we later christened as the kernel $n$-unipotent property bellow. In \cite{MT1} it was observed that $G_2\simeq \U_4(\F_2)$ and $G_1$ is isomorphic to a subgroup of $\U_4(\F_2)$. Moreover it was shown that $G_1$ and $G_2$ play an important role in vanishing triple Massey products with respect to $p=2$. 
Finally the following conjecture was formulated in \cite{MT1}.
\begin{conj}
\label{conj:vanishing n-Massey}
 Let $p$ be a prime number and $n\geq 3$ an integer. Let $F$ be a field, which contains a primitive $p$-th root of unity if ${\rm char}(F)\not=p$. Then the absolute Galois group $G_F$ of $F$ has the vanishing $n$-fold Massey product property with respect to $\F_p$.
\end{conj}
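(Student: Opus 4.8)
The plan is to translate the conjecture into a family of embedding problems and to attack these through the fine structure of the maximal pro-$p$ Galois group. By Theorem~\ref{thm:Dwyer}, for classes $\al_1,\ldots,\al_n\in H^1(G_F,\F_p)$ the Massey product $\langle\al_1,\ldots,\al_n\rangle$ is defined and vanishes exactly when the representation $\bar\rho\colon G_F\to\bar\U_{n+1}(\F_p)$ prescribed by placing the $\al_i$ on the superdiagonal lifts along the central extension
\[
1\to\F_p\to\U_{n+1}(\F_p)\to\bar\U_{n+1}(\F_p)\to 1 .
\]
Because $\U_{n+1}(\F_p)$ is a finite $p$-group, every such $\bar\rho$ factors through the maximal pro-$p$ quotient $G_F(p)=\Gal(F(p)/F)$, so I would first reduce the whole problem to proving, for every $n\ge 3$, that $G_F(p)$ has this lifting property. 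In this pro-$p$ setting the hypothesis that $F$ contains $\mu_p$ (when $\Char(F)\ne p$) lets me invoke the Bloch--Kato/Voevodsky--Rost theorem: the cohomology ring $H^*(G_F,\F_p)\cong K^M_*(F)/p$ is a quadratic algebra, generated in degree one with all relations in degree two. This is the structural input I want to exploit.

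The core argument is an induction on $n$. A defining system for $\langle\al_1,\ldots,\al_n\rangle$ is precisely a homomorphism $\bar\rho\colon G_F(p)\to\bar\U_{n+1}(\F_p)$ extending the prescribed superdiagonal, and its existence is governed by the vanishing of the contiguous proper sub-products $\langle\al_i,\ldots,\al_j\rangle$, which the induction hypothesis supplies. Given such a $\bar\rho$, the obstruction to lifting it to $\U_{n+1}(\F_p)$ is a $2$-cocycle whose class in $H^2(G_F,\F_p)$ is the Massey value attached to $\bar\rho$; the vanishing I must prove is the existence of some $\bar\rho$ for which this class is $0$. Equivalently, fixing one $\bar\rho$, the obstruction must lie in the indeterminacy, which contains $\al_1\cup H^1(G_F,\F_p)+H^1(G_F,\F_p)\cup\al_n$. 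Here the quadraticity from Bloch--Kato becomes decisive: since $H^2$ is spanned by cup products, I would try to show that the obstruction, assembled from the entries of $\bar\rho$, decomposes as a sum of products in which one factor is $\al_1$ or $\al_n$, so that adjusting the free entries of $\bar\rho$ absorbs it into the cup-product ideal.

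To make this decomposition systematic I would model $G_F(p)$ on a minimal free pro-$p$ presentation and transport the computation to the cohomology of a Koszul (quadratic) algebra, where higher Massey products of degree-one classes are controlled by the quadratic relations alone; this is the mechanism that succeeds for the explicit one-relator and Demushkin relation structures treated in this paper, and for the small cases $n=3,4$ already known. The main obstacle is precisely the passage from these controlled situations to arbitrary $n$ and arbitrary $F$: Massey products are genuine higher-order operations, and without an explicit presentation of the relations of $G_F(p)$ one has no a priori guarantee either that the defining systems required by the induction exist for all $n$, or that the resulting obstruction always falls into the restricted cup-product indeterminacy. Securing a uniform such guarantee --- equivalently, establishing a strong Koszulity/mildness property of Galois cohomology valid for all $n$ --- is the crux on which the general conjecture rests.
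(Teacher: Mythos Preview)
The statement you are addressing is a \emph{conjecture}; the paper does not prove it in general, and indeed it remains open. The paper establishes only the special case of odd $p$-rigid fields (Theorem~\ref{thm:Massey rigid}), and its method is entirely different from yours: rigidity forces any two classes $\chi_{a_i},\chi_{a_{i+1}}$ with vanishing cup product to be scalar multiples of one another, so the problem reduces to the diagonal case $\langle\chi,\ldots,\chi\rangle$, which is then settled for \emph{all} fields (Theorem~\ref{thm:p>2}) by an explicit construction of a lift $G_F\to\U_{p^s+1}(\F_p)$ built from the concrete matrices of Section~3 and the Kummer-theoretic structure of $F(\zeta_{p^{s+1}},a^{1/p^{s+1}})/F$. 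No appeal to Bloch--Kato or Koszulity is made.

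Your outline is a reasonable heuristic for why one might \emph{expect} the conjecture to hold, but it contains a genuine gap that you partly acknowledge. Quadraticity of $H^*(G_F,\F_p)$ tells you that $H^2$ is spanned by cup products $\chi\cup\psi$ of \emph{arbitrary} degree-one classes; it does not tell you that a given obstruction class lies in the much smaller subspace $\alpha_1\cup H^1+H^1\cup\alpha_n$. There is no mechanism in your argument bridging this gap, and it is exactly here that the difficulty of the conjecture resides. Moreover, your description of the indeterminacy as containing $\alpha_1\cup H^1+H^1\cup\alpha_n$ is only correct under additional hypotheses (essentially that the lower Massey products vanish \emph{uniquely}); in general the indeterminacy is more delicate, which further obstructs the induction you propose. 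Finally, the inductive step ``the vanishing of contiguous sub-products supplies a defining system'' is not automatic: definedness of $\langle\alpha_1,\ldots,\alpha_n\rangle$ is an extra hypothesis in the conjecture, not something the induction manufactures.
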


This conjecture is a significant conjecture in Galois theory, and in each case when it can be established for any particular family of fields, any $n\in \N$, $n\geq 3$ and any prime $p$, it has remarkable consequences for the automatic realization of Galois groups and the structure of absolute Galois groups. For further motivations, results and applications, see \cite{MT1}.

In \cite{MT1} we show that Conjecture~\ref{conj:vanishing n-Massey} is true for $n=3,p=2$, and all fields. This is the first case when the validity of this conjecture is established for all fields. This results extends a previous result of M. J. Hopkins and K. G. Wickelgren in \cite{HW}, where they prove this result for global fields of characteristic not 2. In \cite{MT1} we also prove Conjecture~\ref{conj:vanishing n-Massey} for all local fields and all $n\geq 3$, and all primes  $p$. In \cite{MT2} we prove Conjecture~\ref{conj:vanishing n-Massey} for algebraic number fields $n=3$ and all primes $p$.

In \cite{MT1} the Kernel $n$-Unipotent Conjecture recalled below, was also formulated and discussed. In Section~\ref{sec:kernel conj} we recall the definition of the $p$-Zassenhaus filtration $G_{(n)}$ for any profinite group $G$ and prime number $p$. Recall that for a unital commutative ring $\Lambda$, $\U_n(\Lambda)$ is the group of all upper-triangular unipotent $n\times n$-matrices
with entries in  $\Lambda$. The following very interesting property of profinite groups was first studied in \cite{Ef}.

 \begin{defn}
 Let $G$ be a pro-$p$-group and let $n\geq 1$ be an integer. We say that $G$ has the {\it kernel $n$-unipotent property} if 
\[
G_{(n)}=\bigcap \ker(\rho\colon G\to \U_n(\F_p)),
\]
where $\rho$ runs over the set of all representations (continuous homomorphisms) $G\to \U_n(\F_p)$.
\end{defn}

It is easy to see that for $n=1$ and $2$, every pro-$p$-group $G$ has the kernel $n$-unipotent property. In Appendix written jointly with Ido Efrat, we show that for each $n\geq 3$ there exists a finitely generated pro-$p$-group $G$ such that $G$ does not have the kernel $n$-unipotent property. 
In analogy with transcendental numbers, although we assume that many  pro-$p$-groups do not have the kernel $n$-unipotent property, one has to find a suitable family in order to able to check this. 
In a subsequent paper \cite{MT3}, we show that every pro-$p$ Demushkin group has the kernel $n$-unipotent property for $n=3,4$. In Section  5, we also show that pro-$p$ Demushkin groups of rank 2 have the kernel $n$-unipotent property for all $n\geq 3$ (see Proposition~\ref{prop:Demushkin rank 2}).  It is shown in \cite[Theorem A]{Ef} that every free pro-$p$-group has the kernel $n$-property for all $n\geq 3$. This theorem was in \cite{Ef} deduced from a more general theorem called Theorem A' in \cite{Ef}.
 (In Section 2, we provide an alternative direct short proof  (see Theorem~\ref{thm:intersection} part a). That such a proof is possible was announced earlier in \cite[Section 8]{MT1}. Recently, Efrat in \cite{Ef2} also obtained such a proof independently from us.)

It was shown that for $G=G_F(p)$, where $F$ is a field containing a primitive $p$-th root of unity, $G$ has the kernel $3$-unipotent property. (See \cite{MS2,Vi,EM1} for the case $p=2$ and \cite[Example 9.5 (1)]{EM2} for the case $p>2$.) These are deep results closely related to the well-known Merkurjev-Suslin theorem (\cite{MeSu}).

\begin{conj}[Kernel $n$-Unipotent Conjecture]
\label{conj:kernel intersection}
Let $F$ be a field containing  a primitive $p$-th root of unity and let $G=G_F(p)$. Let $n\geq 3$ be an integer. Then $G$ has the kernel $n$-unipotent property.
\end{conj}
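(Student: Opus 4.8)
The inclusion $G_{(n)}\subseteq\bigcap_\rho\ker\rho$ is automatic and holds for every pro-$p$-group: the $p$-Zassenhaus filtration is functorial, a subgroup inclusion $H\le K$ gives $H_{(m)}\le K_{(m)}$, and $\U_n(\F_p)\subseteq 1+N$ with $N$ the strictly upper-triangular matrices satisfies $N^n=0$, so $\U_n(\F_p)_{(n)}\subseteq 1+N^n=\{1\}$; hence $\rho(G_{(n)})\le\U_n(\F_p)_{(n)}=1$ for every $\rho\colon G\to\U_n(\F_p)$. The content of the conjecture is therefore the reverse inclusion: for each $g\notin G_{(n)}$ one must produce a representation $\rho\colon G\to\U_n(\F_p)$ with $\rho(g)\ne 1$.

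The plan is to phrase this separation problem inside the completed group algebra. Let $I\subseteq\F_p[[G]]$ be the augmentation ideal; then $G_{(n)}=G\cap(1+\overline{I^n})$, and a representation $G\to\U_n(\F_p)$ is the same datum as an augmented $\F_p$-algebra homomorphism $\F_p[[G]]\to\F_p\oplus N$ into a nilpotent algebra with $N^n=0$ sending $G$ into unitriangular matrices. Thus $g\notin G_{(n)}$ means the image of $g-1$ is nonzero in the finite-dimensional local algebra $A_{n}:=\F_p[[G]]/\overline{I^n}$, and we must separate this image by a map factoring through a length-$n$ unitriangular representation. Writing $\gr_I\F_p[[G]]=\bigoplus_{k\ge 0}\overline{I^k}/\overline{I^{k+1}}$, it suffices to detect, for each $0<k<n$, every nonzero homogeneous element of degree $k$ by a unitriangular representation of size $\le n$; by Quillen's theorem this graded algebra is the restricted enveloping algebra of the Zassenhaus restricted Lie algebra $\bigoplus_k G_{(k)}/G_{(k+1)}$, and this is exactly the object that the size-$n$ representations must resolve.

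Next I would bring in the Galois-theoretic input that distinguishes $G=G_F(p)$ from a general pro-$p$-group. Writing $G=S/R$ with $S$ free pro-$p$, the free case (our Theorem~\ref{thm:intersection}(a), a reproof of Efrat's Theorem~A in \cite{Ef}) is settled by the Magnus embedding $\F_p[[S]]\cong\F_p\langle\!\langle X_i\rangle\!\rangle$ together with the \emph{staircase representations}: a monomial $X_{i_1}\cdots X_{i_k}$ of degree $k<n$ is detected in the corner entry of an explicit homomorphism into $\U_{k+1}(\F_p)\subseteq\U_n(\F_p)$. The key structural fact for the quotient is supplied by the Norm Residue Isomorphism Theorem of Voevodsky--Rost (for the degree-$2$ case this is the Merkurjev--Suslin theorem \cite{MeSu}): for $F$ containing a primitive $p$-th root of unity the ring $H^\ast(G,\F_p)$ is generated in degree $1$ with relations generated in degree $2$. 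Dually, the relation ideal of $\F_p[[G]]$ has leading terms generated in degree $2$, so $\gr_I\F_p[[G]]$ is the \emph{quadratic} algebra $\F_p\langle X_i\rangle/J$ with $J$ generated by the degree-$2$ relations --- the Koszul dual of $H^\ast(G,\F_p)$. This is the precise analogue, for the quotient, of the fact that $\gr_I\F_p[[S]]$ is the free tensor algebra, and it is what one hopes will allow the staircase mechanism to be transplanted from $S$ to $G$.

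The crux --- and the step I expect to be the main obstacle --- is to upgrade the staircase construction so that it respects the relations. A staircase representation of $S$ descends to $G$ only if it annihilates $J$, i.e.\ kills the quadratic relations, and for a general $g$ there is no reason an individual detecting monomial lies in a staircase compatible with $J$. One must instead show that every nonzero element of degree $k<n$ in $\gr_I\F_p[[G]]$ is detected by some unitriangular representation of size $\le n$ factoring through the quadratic quotient; keeping the matrix size at $n$ rather than at $\dim_{\F_p}A_n$ is exactly the difficulty, since the naive left-regular representation of $A_n$ separates $g$ but only lands in $\U_N(\F_p)$ with $N\gg n$. I would attack this by Koszul duality: the perfect pairing between the degree-$k$ part of a Koszul algebra and $H^k(G,\F_p)$ should convert a nonzero degree-$k$ class into an explicit defining system for an $n$-fold Massey-type datum, and via Dwyer's theorem (Theorem~\ref{thm:Dwyer}) into a genuine homomorphism $G\to\U_n(\F_p)$ nontrivial on $g$. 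Carrying this out in general requires that $\gr_I\F_p[[G]]$ be not merely quadratic but Koszul in the strong, detecting sense --- Positselski's Koszulity conjecture for Galois groups --- which is presently open; establishing (or circumventing) this Koszulity, uniformly in $n$, is where the real work lies, and is the reason the conjecture remains out of reach for arbitrary $F$ while being accessible for structurally transparent families such as the odd rigid fields treated in this paper.
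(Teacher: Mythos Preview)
The statement you are addressing is a \emph{conjecture} in the paper, not a theorem; the paper contains no proof of it in general and explicitly treats it as open. What the paper does prove are special cases --- free pro-$p$-groups (Theorem~\ref{thm:intersection}(a)), odd $p$-rigid fields (Theorem~\ref{thm:main}), and Demushkin groups of rank~$2$ (Proposition~\ref{prop:Demushkin rank 2}) --- and in each case by a method entirely unlike yours: the quotient $G/G_{(n)}$ is computed explicitly as a (semi)direct product of cyclic $p$-groups, and for every nontrivial element a detecting homomorphism into $\U_n(\F_p)$ is written down by hand using the matrix lemmas of Section~3. No appeal is made to the Norm Residue Theorem, to $\gr_I\F_p[[G]]$, or to Koszul duality.

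Your proposal is not a proof but an outline of the Positselski program, as you yourself concede in the final paragraph. Two remarks on its internal logic. First, the step ``Dually, the relation ideal of $\F_p[[G]]$ has leading terms generated in degree~$2$, so $\gr_I\F_p[[G]]$ is the quadratic algebra $\F_p\langle X_i\rangle/J$'' is not a consequence of the Norm Residue Theorem: quadraticity of $H^\ast(G,\F_p)$ does not by itself force $\gr_I\F_p[[G]]$ to be quadratic, let alone to be the quadratic dual of $H^\ast$. That implication is essentially the Koszul property you invoke only in the next paragraph, so the argument is circular at this point. Second, even granting Koszulity, the passage ``the perfect pairing \dots\ should convert a nonzero degree-$k$ class \dots\ via Dwyer's theorem into a genuine homomorphism $G\to\U_n(\F_p)$'' is a hope, not an argument: Dwyer's theorem produces $\U_{k+1}$-representations from \emph{vanishing} Massey products of prescribed degree-$1$ classes, not detecting maps from arbitrary nonzero elements of $\gr^k_I\F_p[[G]]$, and the problem of bounding the matrix size by $n$ rather than by $\dim_{\F_p}\F_p[[G]]/\overline{I^n}$ is exactly the difficulty you identify but do not resolve. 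These are precisely the obstacles that keep Conjecture~\ref{conj:kernel intersection} open.
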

A connection between Conjectures~\ref{conj:vanishing n-Massey} and \ref{conj:kernel intersection} is via Dwyer's theorem quoted as Theorem~\ref{thm:Dwyer} below. Namely, in order to obtain $n$-unipotent representations of $G=G_F$ we need $n$-Massey products vanishing for suitable $\alpha_1,\ldots,\alpha_{n-1}\in H^1(G,\F_p)$. 

The main results in our paper are Theorems~\ref{thm:main} and~\ref{thm:p>2}. In them we establish both the Vanishing $n$-Massey Conjecture and the Kernel $n$-Unipotent Conjecture for $p$-rigid fields ($p>2$) and for all $n\geq 3$. We shall now recall the definition of rigid fields.

Assume $p>2$. This assumption is made throughout the paper except when we explicitly consider $p=2$. Let $F$ be a field, which we assume to contain a fixed primitive $p$-th root of unity $\zeta_p$.
For each $a\in F^\times=F\setminus\{0\}$, we have an element $\chi_a\in {\rm Hom}(G_F,\F_p)=H^1(G_F,\F_p)$ defined by $ \sigma (\sqrt[p]{a})= \zeta_p^{\chi_a(\sigma)} \sqrt[p]{a},$ for all $\sigma\in G_F$. 

Then $F$ is $p$-rigid if and only if $\chi_a\cup \chi_b=0\in H^2(G_F,\F_p)$ implies that $\chi_a,\chi_b\in H^1(G,\F_p)$ are linearly dependent over $\F_p$. (This definition  coincides with the one given in Definition~\ref{defn:p-rigid}.)  Since we assume that $p> 2$, we sometimes call such a field an odd rigid field. 
Besides fields $F$ with $G_F(p)$ free pro-$p$-groups or Demushkin pro-$p$-groups, $p$-rigid fields play a fundamental role in current studies of Galois groups of maximal $p$-extensions. (See e.g., \cite{CMQ}, \cite{EK}, \cite{Wa1},\cite{Wa2} and \cite{Wa3}.) 

The structure of this paper is as follows. In Section 2 we will present another (short and direct)  proof for a result, which was first proved by I. Efrat (\cite{Ef}), that  every  free pro-$p$-group has the kernel $n$-unipotent property for all $n$ (Theorem~\ref{thm:intersection} part a). At the same time, we obtain  analogous new results for other filtrations, such as the descending central series and the descending $p$-central series. The result for the descending $p$-central series is interesting because it provides us with first steps toward an analogy to the Kernel $n$-Unipotent Conjecture when we replace the $p$-Zassenhaus filtration by the descending $p$-central series. We also obtain discrete versions of these results (see Theorem~\ref{thm:discrete free}). 
In Section 3 we provide some results on unipotent matrices which we shall later use for constructing unipotent representations. 
In Section 4 we recall some basic facts on  the structure of Galois groups of maximal $p$-extensions  of  odd $p$-rigid fields. Using results obtained in Section 3 and the results recalled in Section 4, we prove the Kernel $n$-Unipotent Conjecture for odd rigid fields in Section 5. 
In this section we also show that every Demushkin group with rank at most $2$ has the kernel $n$-unipotent property for all $n$. In Section 6, for a given $n\geq 3$, we provide an example of a torsion-free pro-$p$-group which does not have the kernel $n$-unipotent property. We also compare the $p$-th and $(p+1)$-th terms in the $p$-Zassenhaus filtration with the third term in the descending $p$-central series for $G_F(p)$, here $F$ is a field containing a primitive $p$-th root of unity (see Proposition~\ref{prop:smallest}).
In Section 7, we review some basic facts on Massey products.  In Section 8 we prove the Vanishing $n$-Massey Conjecture for any odd rigid field and for any $n$. See Theorem~\ref{thm:Massey rigid}. This result is a consequence of a more general result, Theorem~\ref{thm:p>2}. The latter theorem has its own interest because it deals with general fields which contain a primitive $p$-th root of unity if their characteristic is different from $p$, and it also provides an explanation of the well-known result of Artin-Schreier result mentioned at the beginning of the Introduction. In this last section, we also derive a new type of automatic Galois realization (see Corollary~\ref{cor:automatic realization}).  In Appendix written jointly with Ido Efrat, for each $n\geq 3$ we provide examples of pro-$p$-groups which do not satify the kernel $n$-unipotent property.
\\
\\
{\bf Acknowledgements: } We are very grateful to Ido Efrat for his previous collaboration and his important contributions related to the formulation of the Kernel Unipotent Conjecture and results on its validity and to both Ido Efrat and Julien Blondeau for interesting discussions. We would like to thank Sunil Chebolu and  Claudio Quadrelli  for working with the first author on rigid fields.  We would like to thank the referee for his/her careful reading  and  suggestions which we used to improved our exposition.
\section{Kernel conjecture for free-$p$-groups}
\label{sec:kernel conj}

Recall that for a profinite group $G$ and a prime number $p$, the descending central series $(G_i)$, the descending $p$-central series $(G^{(i)})$ and the $p$-Zassenhaus filtration $(G_{(i)})$ of $G$ are defined inductively by
\[
G_1=G,\quad G_{i+1}=[G_i,G],
\]
by
\[
G^{(1)}=G,\quad G^{(i+1)}=(G^{(i)})^p[G^{(i)},G],
\]
and by
\[
G_{(1)}=G, \quad G_{(n)}=G_{(\lceil n/p\rceil)}^p\prod_{i+j=n}[G_{(i)},G_{(j)}],
\]
where $\lceil n/p \rceil$ is the least integer which is greater than or equal to $n/p$. (Here for closed subgroups $H$ and $K$ of $G$, the symbol $[H,K]$ means the smallest closed subgroup of $G$ containing the  commutators $[x,y]=x^{-1}y^{-1}xy$, $x\in H, y\in K$. Similarly, $H^p$ means the smallest closed subgroup of $G$ containing  the $p$-th powers $x^p$, $x\in H$.)

Let $\Lambda$ be $\Z_p$ or $\Z_p/p^r\Z_p=\Z/p^r\Z$ with $r\in \N$. Let  $S$ be a free pro-$p$-group on a finite set of  generators $x_1,\ldots,x_d$.  Then we have the  Magnus homomorphism from the   completed group algebra $\Lambda[[S]]$  to the ring $\Lambda\langle\langle X_1,\ldots,X_d\rangle\rangle$ of the formal power series in $d$ non-commuting variables $X_1,\ldots,X_d$ over $\Lambda$ (equipped with the topology of coefficient-wise convergence).%. The homomorphism is given by 
\[
\psi\colon \Lambda[[S]] \to \Lambda \langle\langle X_1,\ldots,X_d\rangle\rangle, x_i\mapsto 1+X_i.
\]
One basic result is the following
\begin{lem} 
\label{lem:0a}
The Magnus homomorphism  $\psi$ is a (continuous) isomorphism.
\end{lem}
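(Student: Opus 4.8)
The statement is the classical Magnus isomorphism, and the plan is to prove it in three moves: (a) construct $\psi$ and see it is continuous, using the universal property of the free pro-$p$-group $S$; (b) get surjectivity by a compactness/density argument; (c) get injectivity by passing to associated graded rings for suitable adic filtrations. For the set-up, write $A:=\Lambda\langle\langle X_1,\ldots,X_d\rangle\rangle$ and let $\mathfrak a\subseteq A$ be the two-sided ideal generated by $X_1,\ldots,X_d$, together with $p$ in case $\Lambda=\Z_p$. First I would note that $A=\varprojlim_n A/\mathfrak a^n$ with each $A/\mathfrak a^n$ finite, that the coefficient-wise topology on $A$ coincides with the $\mathfrak a$-adic one, and hence that $1+\mathfrak a=\varprojlim_n\bigl(1+\mathfrak a/\mathfrak a^n\bigr)$ is a pro-$p$-group. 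Since $1+X_i\in 1+\mathfrak a$, the universal property of the free pro-$p$-group $S$ on $x_1,\ldots,x_d$ yields a unique continuous homomorphism $S\to 1+\mathfrak a$ with $x_i\mapsto 1+X_i$; extending $\Lambda$-bilinearly over the group algebra and then completing produces the continuous $\Lambda$-algebra homomorphism $\psi$. Freeness of $S$ is used exactly here, and it is indispensable: for instance $1+X$ has infinite order in $1+\mathfrak a$, so no analogue of $\psi$ exists out of $\Z/p$.

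Surjectivity is then immediate: $\Lambda[[S]]$ is compact and $A$ is Hausdorff, so $\psi(\Lambda[[S]])$ is a closed $\Lambda$-subalgebra of $A$; it contains $1$ and each $X_i=\psi(x_i-1)$, and finite $\Lambda$-linear combinations of monomials in the $X_i$ are dense in $A$, so $\psi$ is onto.

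For injectivity, the essential point, I would filter $A$ by the powers of $\mathfrak a$ and $R:=\Lambda[[S]]$ by the powers of the closed two-sided ideal $\mathfrak I$ generated by $p,x_1-1,\ldots,x_d-1$ — equivalently the kernel of $R\to\F_p$, where one uses the small observation that the augmentation ideal of $R$ is topologically generated as a closed left ideal by the $x_i-1$. Since $\psi(p)=p$ and $\psi(s-1)=\psi(s)-1\in\mathfrak a$ for all $s\in S$, we get $\psi(\mathfrak I)\subseteq\mathfrak a$, hence an induced graded homomorphism $\gr(\psi)\colon\gr_{\mathfrak I}R\to\gr_{\mathfrak a}A$. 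Then I would check three things. (i) Both filtrations are separated and complete: clear for $A$, while for $R$ one writes $R=\varprojlim_U\Lambda[S/U]$ over open normal $U\le S$, observes that the ``augmentation-plus-$p$'' ideal of each finite ring $\Lambda[S/U]$ is nilpotent, deduces $\bigcap_n\mathfrak I^n=0$, and checks that the $\mathfrak I$-adic topology on $R$ is its profinite one — note that the Krull intersection theorem is \emph{not} available here, since $R$ is not Noetherian once $d\ge 2$. (ii) $\gr(\psi)$ is surjective automatically, because $\gr_{\mathfrak a}A$ is generated in degree $1$ by $\bar p,\bar X_1,\ldots,\bar X_d$, all of which lie in the image of $\gr_1(\psi)$. (iii) $\dim_{\F_p}\mathfrak I^n/\mathfrak I^{n+1}\le\dim_{\F_p}\mathfrak a^n/\mathfrak a^{n+1}$: indeed $\mathfrak I/\mathfrak I^2$ is spanned over $\F_p$ by $\bar p,\overline{x_1-1},\ldots,\overline{x_d-1}$, the ring $\gr_{\mathfrak I}R$ is generated in degree $1$, and $\bar p$ is central, so $\mathfrak I^n/\mathfrak I^{n+1}$ is spanned by the elements $\bar p^{\,a}\,\overline{x_{i_1}-1}\cdots\overline{x_{i_{n-a}}-1}$, whose number equals exactly $\dim_{\F_p}\mathfrak a^n/\mathfrak a^{n+1}$ as read off from the monomial $\F_p$-basis $\{\,p^aw:\ a+\deg w=n\,\}$ of the latter (with $a\le r-1$ when $\Lambda=\Z/p^r$). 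Combining (ii) and (iii), $\gr_n(\psi)$ is a surjection of $\F_p$-spaces of the same finite dimension for every $n$, hence an isomorphism.

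To finish I would invoke the standard fact that a filtered homomorphism between complete separated filtered rings inducing an isomorphism on associated graded rings is itself an isomorphism (successive approximation); since $\psi$ is then a continuous bijection between profinite rings, it is a homeomorphism, so $\psi$ is a continuous isomorphism. I expect the main obstacle to be the bookkeeping in step (iii) together with the hands-on proof that $\bigcap_n\mathfrak I^n=0$ in the non-Noetherian ring $R$; the equality of dimensions there is, a posteriori, the classical fact that the associated graded of the group algebra of a free (pro-$p$) group is a free associative algebra.
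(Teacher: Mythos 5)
The paper does not actually prove this lemma: its ``proof'' is a pointer to Serre (\emph{Galois cohomology}, Ch.~I) and Lazard. Your argument is, in substance, the standard proof found in those references --- construct $\psi$ via the universal property of the free pro-$p$-group applied to the pro-$p$-group $1+\mathfrak a$, get surjectivity from compactness plus density of the polynomials, and get injectivity by comparing the $\mathfrak I$-adic and $\mathfrak a$-adic associated graded rings, where the dimension count forces the graded surjection to be an isomorphism --- so I would classify it as correct and essentially the same route, and the plan is sound. Two small points to repair in the write-up. First, for $\Lambda=\Z_p$ the ring $\Z_p[S/U]$ is \emph{not} finite and its maximal ideal is not nilpotent; in step (i) you should instead write $R=\varprojlim_{U,m}(\Z/p^m\Z)[S/U]$ and use that the kernel of $(\Z/p^m\Z)[S/U]\to\F_p$ is nilpotent (the maximal ideal of a finite local ring), after which the deduction $\bigcap_n\mathfrak I^n=0$ goes through unchanged. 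Second, in (iii) you should filter $A$ by the closed ideals $\mathfrak a^{(n)}=\{\sum_I a_IX_I:\ v(a_I)\ge n-|I|\}$ rather than by the abstract powers $\mathfrak a^n$ (whose closedness you would otherwise have to check), so that the monomial basis $\{p^aX_I:\ a+|I|=n\}$ of $\mathfrak a^{(n)}/\mathfrak a^{(n+1)}$ that your dimension count relies on is literally available; with that convention the inequality in (iii) together with the surjectivity in (ii) does yield that each $\gr_n(\psi)$ is an isomorphism, and the successive-approximation step finishes the proof as you say.
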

\begin{proof} See, for example, \cite[Chapter I, Proposition 7]{Se} or \cite[Chapter 6]{Laz}.% for the case $\Lambda=\Z_p$; see \cite[Theorem 7.16]{Ko} for the case $\Lambda=\F_p$.
\end{proof}
%The element $\psi(s)$ is called the {Magnus expansion (relative to $\Lambda$)} of $s\in S$. 
 A multi-index $I=(i_1,\ldots,i_k)$ is called {\it of height} $d$ if $1\leq i_r\leq d$. Its {\it length} $|I|$ is $k$. If multi-indices $I=(i_1,\ldots,i_k)$, $J=(j_1,\ldots,j_l)$ are given, we denote by
\[
IJ=(i_1,\ldots,i_k,j_1,\ldots,j_l), 
\]
the concatenation of $I$ and $J$. Also for a multi-index $I=(i_1,\ldots,i_k)$ of height $d$, we set $X_I=X_{i_1}\cdots X_{i_k}$.

The {\it Magnus expansion (relative to $\Lambda$)} $\psi(s)$ of $s\in S$ is given by
\[
 \psi(s)= 1+ \sum_{I} \epsilon_{I,\Lambda}(s) X_I,
\]
where $I$ runs over all multi-indices of height $d$. So for each $I$, we have a map $\epsilon_{I,\Lambda}\colon S\to \Lambda$. For $I=\emptyset$, we define $\epsilon_{\emptyset,\Lambda}(s)=1$, for all $s\in S$.

% From now on we assume that this is the case.
The next lemma records some quite well-known facts. 
\begin{lem}
\label{lem:1a}
Let $\sigma,\tau$ be elements in $S$.  Let $n\geq 2$ be an integer.
\begin{enumerate}
\item[(a)] For any multi-index $I$ of height $n$,  one has 
\[ \epsilon_{I,\Lambda}(\sigma\tau)=\sum_{I_1I_2=I} \epsilon_{I_1,\Lambda}(\sigma)\epsilon_{I_2,\Lambda}(\tau).\]
\item[(b)] We have $\sigma\in S_{n}$ if and only if $\epsilon_{I,\Z_p}(\sigma)=0$ for every multi-index $I$ with $1\leq |I|<n$.
\item[(c)] We have $\sigma\in S_{(n)}$ if and only if $\epsilon_{I,\F_p}(\sigma)=0$ for every multi-index $I$ with $1\leq |I|<n$.
\item[(d)] We have $\sigma\in S^{(n)}$ if and only if  $v(\epsilon_{I,\Z_p}(\sigma))\geq n-|I|$ for every multi-index $I$ with $1\leq |I|<n$. Here $v$ is the $p$-adic valuation on $\Z_p$ ($v(p)=1$).
\end{enumerate}
\end{lem}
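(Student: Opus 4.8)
The plan is to dispose of (a) by a direct expansion, and then obtain (b), (c) and (d) uniformly by recognising each of the three filtrations as a ``dimension subgroup'' filtration that the Magnus isomorphism $\psi$ of Lemma~\ref{lem:0a} renders transparent.

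For (a), apply $\psi$ to $\sigma\tau$ and use that $\psi$ is a ring homomorphism, so $\psi(\sigma\tau)=\psi(\sigma)\psi(\tau)$. Writing $\psi(\sigma)=1+\sum_{J}\epsilon_{J,\Lambda}(\sigma)X_J$ and $\psi(\tau)=1+\sum_{K}\epsilon_{K,\Lambda}(\tau)X_K$, with the convention $\epsilon_{\emptyset,\Lambda}=1$, and reading off the coefficient of a fixed monomial $X_I$ on both sides yields $\epsilon_{I,\Lambda}(\sigma\tau)=\sum_{I_1I_2=I}\epsilon_{I_1,\Lambda}(\sigma)\epsilon_{I_2,\Lambda}(\tau)$, where $I_1,I_2$ range over all (possibly empty) multi-indices whose concatenation is $I$. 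This holds for every multi-index $I$.

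For (b), (c) and (d), write $\mathfrak m=(X_1,\dots,X_d)$ for the closed two-sided ideal of $R_\Lambda:=\Lambda\langle\langle X_1,\dots,X_d\rangle\rangle$, so that $\mathfrak m^{\,n}$ is exactly the set of power series whose monomials all have degree $\ge n$; since $\psi(x_i-1)=X_i$, the isomorphism $\psi$ carries the augmentation ideal $\mathfrak a_\Lambda:=\ker(\Lambda[[S]]\to\Lambda)$ onto $\mathfrak m$, hence $\mathfrak a_\Lambda^{\,n}$ onto $\mathfrak m^{\,n}$. Thus the vanishing conditions in (b) and (c) say exactly that $\sigma-1\in\mathfrak a_{\Z_p}^{\,n}$ and $\sigma-1\in\mathfrak a_{\F_p}^{\,n}$, while the valuation condition in (d) says exactly that $\psi(\sigma)-1$ lies in the weighted ideal $F^{\,n}R_{\Z_p}:=\sum_{a+k\ge n}p^{\,a}\mathfrak m^{\,k}$, i.e.\ the ideal for the filtration assigning weight $1$ to each $X_i$ and to $p$ (indeed $\sum_{|I|\ge1}c_IX_I$ lies in $F^{\,n}R_{\Z_p}$ if and only if $v(c_I)\ge n-|I|$ for every $I$). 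Hence (b), (c), (d) become the dimension-subgroup identifications
\[
S_n=\{\sigma:\sigma-1\in\mathfrak a_{\Z_p}^{\,n}\},\qquad
S_{(n)}=\{\sigma:\sigma-1\in\mathfrak a_{\F_p}^{\,n}\},\qquad
S^{(n)}=\{\sigma:\psi(\sigma)-1\in F^{\,n}R_{\Z_p}\}
\]
for the finitely generated free pro-$p$ group $S$. In each case one inclusion is an easy induction on $n$: using (a), the right-hand sets are seen to be closed normal subgroups $D_n$ with $D_1=S$, satisfying $[D_i,D_j]\subseteq D_{i+j}$ and, in the $\F_p$-case, $D_i^{\,p}\subseteq D_{pi}$ (from the identity $(1+u)^p=1+u^p$, valid in a characteristic-$p$ ring since $1$ is central) resp.\ $D_i^{\,p}\subseteq D_{i+1}$ in the $\Z_p$-case; by the minimality built into the definitions of the three filtrations this gives $S_n\subseteq D_n$, $S_{(n)}\subseteq D_n$, $S^{(n)}\subseteq D_n$. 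The reverse inclusions are the substantive part and are classical: the associated graded Lie algebra of $S$ for each filtration is the relevant \emph{free} object --- the free Lie $\Z_p$-algebra on $d$ generators (Magnus--Witt) for $(S_n)$, the free restricted Lie $\F_p$-algebra for $(S_{(n)})$, and the corresponding mixed free algebra for $(S^{(n)})$ --- and this free object embeds into $\gr R_\Lambda$, which for the weighted filtration is $\F_p\langle X_1,\dots,X_d\rangle[\pi]$ with $\pi=\gr_1(p)$ central of degree $1$. The injectivity of $\gr S\hookrightarrow\gr R_\Lambda$, combined with $\bigcap_n S_n=\bigcap_n S_{(n)}=\bigcap_n S^{(n)}=\{1\}$, upgrades the forward inclusions to equalities. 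See \cite{Se} and \cite{Laz}, and \cite{Ef} for the $p$-Zassenhaus filtration.

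I expect the only real friction to lie in (d): one must set up the weighted filtration $F^\bullet R_{\Z_p}$, check that its associated graded ring is $\F_p\langle X_1,\dots,X_d\rangle[\pi]$ with $\pi$ the central degree-$1$ symbol of $p$, and verify that the graded Lie algebra of the lower $p$-central series of $S$ embeds into it as the appropriate mixed free Lie algebra so that $\gr S\to\gr R_{\Z_p}$ is injective. The analogous steps for (b) and (c) are precisely the textbook Magnus and Jennings--Zassenhaus--Lazard theorems and require nothing new.
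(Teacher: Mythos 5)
Your proposal is correct and follows essentially the same route as the paper: part (a) is the same direct consequence of $\psi$ being a homomorphism, and for (b)--(d) the paper simply cites the classical Magnus--Witt, Jennings--Brauer--Lazard, and Skopin--Koch theorems (noting for (c) exactly the identification $S_{(n)}=\{s: s-1\in I_{\F_p}^n(S)\}$ that you make), which are the same dimension-subgroup identifications whose standard proofs you sketch. The substantive reverse inclusions are in both cases deferred to the classical literature, so there is no genuinely different argument here.
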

\begin{proof} (a) It just follows from the fact that $\psi$ is a homomorphism.  %See also \cite[Proposition 2.18]{Vo2} and/or \cite[Lemma 5.1 a)]{Ef}.

(b) See e.g., \cite[Proposition 8.15]{Mo}.

(c) See e.g.,  \cite[Lemma 2.19]{Vo2} and/or \cite[Proposition 6.2]{Ef}. The key point here is that by the Jennings-Brauer theorem (see \cite[Theorem 12.9]{DDMS}), the $p$-Zassenhaus filtration on $S$ coincides with the filtration induced from the $I_{\F_p}(S)$-adic filtration on $\F_p[[S]]$ (here $I_{\F_p}(S)$ is the augmentation ideal of $\F_p[[S]]$). Namely, we have $S_{(n)}=\{s\in S\mid s-1\in I_{\F_p}^{n}(S)\}.$

(d) See e.g., \cite[Theorem 7.14]{Ko} together with Lemma~\ref{lem:0a}, or \cite[Satz 1]{Ko1}. %or proof of Theorem 7.14 in \cite{Ko}.%\cite[Theorem 7.14]{Ko}.
\end{proof}
The following lemma was proved in \cite[Lemma 7.4]{Ef}.
\begin{lem}
\label{lem:2a}
Let $I=(i_1,\ldots,i_k)$ be a multi-index. We define a map  
\[ \rho=\rho_{I,\Lambda}\colon  S\to \U_{k+1}(\Lambda),\] 
by
\[\rho(\sigma)_{\mu\nu}=\epsilon_{(i_\mu,\ldots,i_{\nu-1}),\Lambda}(\sigma),
\]
for $\sigma\in S$ and $\mu<\nu$ (the other entries being obvious). Then $\rho$ is a continuous group homomorphism.
\end{lem}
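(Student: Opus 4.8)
The plan is to verify directly that $\rho=\rho_{I,\Lambda}$ is multiplicative, using the Magnus expansion and the concatenation formula of Lemma~\ref{lem:1a}(a). Write $I=(i_1,\ldots,i_k)$ and for $1\le \mu\le \nu\le k+1$ set $I[\mu,\nu)=(i_\mu,\ldots,i_{\nu-1})$, so that $\rho(\sigma)_{\mu\nu}=\epsilon_{I[\mu,\nu),\Lambda}(\sigma)$; in particular the diagonal entries are $\epsilon_{\emptyset,\Lambda}(\sigma)=1$, so $\rho(\sigma)$ genuinely lands in $\U_{k+1}(\Lambda)$. First I would record that for any $\sigma,\tau\in S$ the product in $\U_{k+1}(\Lambda)$ has entries
\[
(\rho(\sigma)\rho(\tau))_{\mu\nu}=\sum_{\mu\le \lambda\le \nu}\rho(\sigma)_{\mu\lambda}\,\rho(\tau)_{\lambda\nu}
=\sum_{\mu\le\lambda\le\nu}\epsilon_{I[\mu,\lambda),\Lambda}(\sigma)\,\epsilon_{I[\lambda,\nu),\Lambda}(\tau).
\]
The key combinatorial point is that the submulti-indices of the fixed multi-index $I[\mu,\nu)$ obtained by splitting it as a concatenation $I_1I_2$ are \emph{exactly} the pairs $\bigl(I[\mu,\lambda),\,I[\lambda,\nu)\bigr)$ for $\lambda$ ranging over $\mu,\mu+1,\ldots,\nu$ — because $I[\mu,\nu)$ is a contiguous block of the single sequence $I$, so a split point within it corresponds to choosing the index $\lambda$. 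Hence the displayed sum is precisely $\sum_{I_1I_2=I[\mu,\nu)}\epsilon_{I_1,\Lambda}(\sigma)\epsilon_{I_2,\Lambda}(\tau)$, which by Lemma~\ref{lem:1a}(a) equals $\epsilon_{I[\mu,\nu),\Lambda}(\sigma\tau)=\rho(\sigma\tau)_{\mu\nu}$. (One should note that Lemma~\ref{lem:1a}(a) is stated for $I$ of height $n\ge 2$, but the edge cases where $I[\mu,\nu)$ is empty or a single letter are immediate from $\epsilon_{\emptyset,\Lambda}\equiv 1$ and the additivity $\epsilon_{(i),\Lambda}(\sigma\tau)=\epsilon_{(i),\Lambda}(\sigma)+\epsilon_{(i),\Lambda}(\tau)$, so all entries are covered.) This proves $\rho(\sigma\tau)=\rho(\sigma)\rho(\tau)$.

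It remains to check continuity. Here I would argue that each coordinate map $\sigma\mapsto\epsilon_{J,\Lambda}(\sigma)$ is continuous: by Lemma~\ref{lem:0a} the Magnus homomorphism $\psi\colon\Lambda[[S]]\to\Lambda\langle\langle X_1,\ldots,X_d\rangle\rangle$ is a continuous isomorphism, the inclusion $S\hookrightarrow\Lambda[[S]]$ is continuous, and extracting the coefficient of $X_J$ is continuous for the topology of coefficient-wise convergence on the target. Composing, $\epsilon_{J,\Lambda}$ is continuous, and since $\U_{k+1}(\Lambda)$ carries the product topology on its entries, $\rho$ is continuous.

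I do not expect a serious obstacle here: the statement is quoted as already appearing in \cite[Lemma 7.4]{Ef}, and the whole content is the bookkeeping identity that block-splittings of a contiguous subword of $I$ are indexed by interior cut points. The only place requiring a little care is making sure the matrix-multiplication index range $\mu\le\lambda\le\nu$ (including the two endpoints, which contribute the diagonal $1$'s) matches the split-point range in Lemma~\ref{lem:1a}(a), and handling the degenerate short-index cases separately so that one is not misapplying the $n\ge 2$ hypothesis of that lemma.
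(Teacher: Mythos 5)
Your proposal is correct and follows exactly the paper's route: the paper's entire proof is the one-line observation that the lemma follows from Lemma~\ref{lem:1a}(a), and you have simply written out in full the matrix-entry computation and the continuity check that this citation encapsulates.
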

\begin{proof} It follows from Lemma~\ref{lem:1a} a). 
\end{proof}
For $1\leq i,j\leq n$, let $e_{ij}$ be the $n\times n$ matrix with the 1 in $\Lambda$ in the position $(i,j)$ and 0 elsewhere. It is well-known that the set of elements $1+e_{i,i+1}$, $i=1,\ldots, n$, generate the group $\U_n(\Lambda)$. (See \cite[page 455]{We}.) 

\begin{lem} Let $n,r,s\geq 1$ be integers. Then we have 
\[ \U_n(\Z_p)_{n}=\U_n(\F_p)_{(n)}=\U_r(\Z/p^s\Z)^{(r+s-1)}=1.\]
\end{lem}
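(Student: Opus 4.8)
The plan is to deduce all three equalities from one nilpotence estimate. For a commutative ring $\Lambda$, let $T_k(\Lambda)$ denote the ring of upper triangular $k\times k$ matrices over $\Lambda$ and let $N=N_k(\Lambda)\subseteq T_k(\Lambda)$ be the two-sided ideal of strictly upper triangular matrices, so that $\U_k(\Lambda)=1+N$ and $N^k=0$. For the first two assertions I work in $T_n(\Lambda)$ (with $\Lambda=\Z_p$, resp.\ $\Lambda=\F_p$) and the nilpotent two-sided ideal $\mathfrak a:=N$. For the third, with $\Lambda=\Z/p^s\Z$, I work in $T_r(\Lambda)$ with $\mathfrak a:=p\,T_r(\Lambda)+N$; here $\mathfrak a^{m}=\sum_{a+b=m}p^aN^b$ (setting $N^0:=T_r(\Lambda)$, using $T_r(\Lambda)N=NT_r(\Lambda)=N$), and a monomial $p^aN^b$ already vanishes once $a\ge s$ or $b\ge r$, so the extreme case $a=s-1$, $b=r-1$ gives $a+b=r+s-2$ and hence $\mathfrak a^{\,r+s-1}=0$. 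In each case it then suffices to prove that the relevant filtration $F_\bullet$ of $G=1+\mathfrak a$ satisfies $F_m\subseteq 1+\mathfrak a^{\,m}$ for every $m$, since at the stated level the right-hand side is $1+0=\{1\}$.

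Two elementary facts drive every induction. First, for elements $A,B$ of a nilpotent two-sided ideal $\mathfrak a$ of a ring, $1+A$ and $1+B$ are units and a direct expansion gives the commutator identity
\[
[1+A,\,1+B]=1+(1+A)^{-1}(1+B)^{-1}(AB-BA),
\]
so if $A\in\mathfrak a^{\,i}$ and $B\in\mathfrak a^{\,j}$ then $AB-BA\in\mathfrak a^{\,i+j}$ and, $\mathfrak a^{\,i+j}$ being a two-sided ideal, $[\,1+\mathfrak a^{\,i},\,1+\mathfrak a^{\,j}\,]\subseteq 1+\mathfrak a^{\,i+j}$. Second, $(1+M)^p=\sum_{k=1}^{p}\binom pk M^k$: over $\F_p$ one has $\binom pk\equiv 0$ for $1\le k\le p-1$, so $(1+M)^p=1+M^p$ and $M\in N^{\,m}$ forces $(1+M)^p\in 1+N^{\,pm}$; over $\Z/p^s\Z$, if $M\in\mathfrak a^{\,m}$ then the $k=1$ term $pM$ lies in $p\,\mathfrak a^{\,m}\subseteq\mathfrak a^{\,m+1}$ (since $p\in\mathfrak a$) and every term with $k\ge 2$ lies in $\mathfrak a^{\,mk}\subseteq\mathfrak a^{\,m+1}$, whence $(1+M)^p\in 1+\mathfrak a^{\,m+1}$. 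Finally each $1+\mathfrak a^{\,m}$ is a subgroup of $1+\mathfrak a$ (and a closed one when $\Lambda=\Z_p$), so to bound a filtration term it is enough to bound its generators.

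Each assertion is now a short induction on $m$. For the descending central series of $\U_n(\Z_p)$: $\U_n(\Z_p)_1=1+N$, and $\U_n(\Z_p)_{m+1}=[\U_n(\Z_p)_m,\U_n(\Z_p)]\subseteq[1+N^{\,m},1+N]\subseteq 1+N^{\,m+1}$, hence $\U_n(\Z_p)_n\subseteq 1+N^{\,n}=\{1\}$. For the $p$-Zassenhaus filtration of $G=\U_n(\F_p)$, using $G_{(m)}=G_{(\lceil m/p\rceil)}^p\prod_{i+j=m}[G_{(i)},G_{(j)}]$ with base case $G_{(1)}=1+N$: for $m\ge 2$ one has $\lceil m/p\rceil<m$ and $p\lceil m/p\rceil\ge m$, so by induction $G_{(\lceil m/p\rceil)}^{p}\subseteq 1+N^{\,p\lceil m/p\rceil}\subseteq 1+N^{\,m}$ while $[G_{(i)},G_{(j)}]\subseteq 1+N^{\,i+j}=1+N^{\,m}$; hence $\U_n(\F_p)_{(n)}\subseteq 1+N^{\,n}=\{1\}$. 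For the descending $p$-central series of $G=\U_r(\Z/p^s\Z)$, using $G^{(m+1)}=(G^{(m)})^p[G^{(m)},G]$ with base case $G^{(1)}=1+N\subseteq 1+\mathfrak a$: if $G^{(m)}\subseteq 1+\mathfrak a^{\,m}$ then $(G^{(m)})^p\subseteq 1+\mathfrak a^{\,m+1}$ and $[G^{(m)},G]\subseteq[1+\mathfrak a^{\,m},1+\mathfrak a]\subseteq 1+\mathfrak a^{\,m+1}$, so $G^{(m+1)}\subseteq 1+\mathfrak a^{\,m+1}$; therefore $\U_r(\Z/p^s\Z)^{(r+s-1)}\subseteq 1+\mathfrak a^{\,r+s-1}=\{1\}$.

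I do not expect a genuine obstacle: everything reduces to the nilpotence $N^n=0$, resp.\ $\mathfrak a^{\,r+s-1}=0$, together with the two identities above. The one point that rewards care is the last case, where the matrix filtration $N^\bullet$ by itself does not advance under $p$-th powers — over $\Z/p^s\Z$ one has $(1+M)^p=1+M^p+(\text{terms divisible by }p)$ rather than $1+M^p$ — so it is precisely by absorbing the single extra weight carried by $p$ that the ideal $\mathfrak a=p\,T_r(\Lambda)+N$ makes the $p$-th-power term of the descending $p$-central series move the filtration forward; the bound $r+s-1$ is then simply the nilpotence index of $\mathfrak a$.
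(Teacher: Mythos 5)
Your proof is correct, and it takes a genuinely different route from the one in the paper. The paper proves the statement (explicitly only for the third equality, the others being declared similar and easier) by pulling back to a free pro-$p$-group $S$ on $r-1$ generators: the representation $\rho_{I,\Z/p^s\Z}$ of Lemma 2.3 hits the elementary generators $1+e_{i,i+1}$ and is therefore surjective, so $\U_r(\Z/p^s\Z)^{(r+s-1)}=\rho(S^{(r+s-1)})$, which is then shown to be trivial using the coefficient-wise characterization of $S^{(n)}$ via the Magnus expansion (Lemma 2.2(d), which rests on Koch's theorem). You instead argue directly inside the triangular matrix ring: the commutator identity $[1+A,1+B]=1+(1+A)^{-1}(1+B)^{-1}(AB-BA)$ and the binomial expansion of $(1+M)^p$ show that each filtration respects the powers of a nilpotent two-sided ideal, namely $N$ for the first two cases and $\mathfrak a=p\,T_r(\Lambda)+N$ for the third, and the nilpotence index of $\mathfrak a$ is exactly $r+s-1$. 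Your argument is self-contained and more elementary --- it needs none of the Magnus machinery and in particular avoids the nontrivial input from Koch --- and the weighted ideal $\mathfrak a$ makes transparent why the bound is $r+s-1$; the paper's argument, by contrast, costs nothing extra in context since Lemmas 2.2 and 2.3 are already set up and used throughout Section 2, and it yields the surjectivity statement $\rho(S^{(r+s-1)})=\U_r(\Z/p^s\Z)^{(r+s-1)}$ as a byproduct. The only points in your write-up that deserve the explicit half-sentence you essentially already give are that $[H,K]$ and $H^p$ denote closed subgroups \emph{generated by} commutators and $p$-th powers, so that bounding generators suffices because each $1+\mathfrak a^{\,m}$ is a closed subgroup, and that the identification $\mathfrak a^{\,m}=\sum_{a+b=m}p^aN^b$ uses the centrality of $p$ together with $T_r(\Lambda)N=NT_r(\Lambda)=N$; both are routine.
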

\begin{proof} We only give a proof for the statement $\U_r(\Z/p^s\Z)^{(r+s-1)}=1$. Other statements are proved similarly and even more easily.

Let $S$ be a free pro-$p$-group on $r-1$ generators $x_1,\ldots,x_{r-1}$. We consider the multi-index $I= (1,\ldots,r-1)$ of length $r-1$. Then by Lemma~\ref{lem:2a}, we have a continuous homomorphism 
\[ \rho=\rho_{I,\Z/p^s\Z}\colon  S\to \U_r(\Z/p^s\Z)\] 
which is defined by
\[\rho(\sigma)_{\mu\nu}=\epsilon_{(\mu,\ldots,\nu-1),\Z/p^s\Z}(\sigma),
\]
for $\sigma\in S$ and $\mu<\nu$ (the other entries being obvious).
In particular $\rho(x_i)=1+e_{i,i+1}$, for $i=1,\ldots,r-1$. Thus $\rho$ is surjective, and hence $\rho(S^{(r+s-1)})=\U_r(\Z/p^s\Z)^{(r+s-1)}$.

Now let $\sigma$ be any element in $S^{(r+s-1)}$. For each sub-multi-index $J\subseteq I$,  we have $v_p(\epsilon_{J,\Z_p}(\sigma))+|J| \geq r+s-1$ by Lemma~\ref{lem:1a} part d). Thus $v_p(\epsilon_{J,\Z_p}(\sigma))\geq s$, and hence $\epsilon_{J,\Z/p^s\Z}(\sigma)=0$. It implies that $\rho(\sigma)=1$. Therefore $\U_r(\Z/p^s\Z)^{(r+s-1)}=\rho(S^{(r+s-1)})=1$.
\end{proof}
\begin{lem} Let $G$ be a pro-$p$-group. 
 \label{lem:3a} 
\begin{enumerate} 
\item[(a)] Every continuous homomorphism $\rho\colon G\to \U_{n}(\Z_p)$ is trivial on $G_{n}$. 
\item[(b)] Every continuous homomorphism $\rho\colon G\to \U_{n}(\F_p)$ is trivial on $G_{(n)}$.
\item[(c)] For each $k=1,\ldots,n-1$, every continuous homomorphism $\rho\colon G\to \U_{k+1}(\Z/p^{n-k}\Z)$ is trivial on $G^{(n)}$. 
\end{enumerate}
\end{lem}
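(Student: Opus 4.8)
The plan is to deduce all three statements from the computation in the preceding lemma by exploiting the functoriality of the three filtrations under continuous homomorphisms.

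First I would isolate the general principle: if $\phi\colon G\to H$ is a continuous homomorphism of pro-$p$-groups, then $\phi(G_i)\subseteq H_i$, $\phi(G_{(i)})\subseteq H_{(i)}$ and $\phi(G^{(i)})\subseteq H^{(i)}$ for all $i\geq 1$. Each is proved by induction on $i$ directly from the recursive definitions recalled at the beginning of this section. The base case $i=1$ is trivial. For the inductive step one uses the elementary identities $\phi([x,y])=[\phi(x),\phi(y)]$ and $\phi(x^p)=\phi(x)^p$, together with the topological fact that a continuous homomorphism sends the smallest closed subgroup containing a subset $X$ into the smallest closed subgroup containing $\phi(X)$ (since the preimage under $\phi$ of a closed subgroup of $H$ is a closed subgroup of $G$). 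For example $\phi(G_{i+1})=\phi([G_i,G])\subseteq[\phi(G_i),\phi(G)]\subseteq[H_i,H]=H_{i+1}$, and the $p$-Zassenhaus and descending $p$-central cases are handled identically, the power parts $G_{(\lceil i/p\rceil)}^p$, respectively $(G^{(i)})^p$, being treated the same way.

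Granting this, each part is immediate. For (a), apply the principle with $H=\U_n(\Z_p)$ and invoke the previous lemma: $\rho(G_n)\subseteq\U_n(\Z_p)_n=1$. For (b), apply it with $H=\U_n(\F_p)$: $\rho(G_{(n)})\subseteq\U_n(\F_p)_{(n)}=1$. For (c), set $r=k+1$ and $s=n-k$, so that $r+s-1=k+1+(n-k)-1=n$; then applying the principle with $H=\U_{k+1}(\Z/p^{n-k}\Z)$ gives $\rho(G^{(n)})\subseteq\U_r(\Z/p^s\Z)^{(r+s-1)}=1$.

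I do not expect a genuine obstacle here: the whole content has been front-loaded into the preceding lemma. The only points requiring a little care are the topological subtlety in the functoriality step (that a continuous homomorphism respects the closure operation defining these subgroups) and, in part (c), the bookkeeping that the coefficient reduction to $\Z/p^{n-k}\Z$ is exactly what makes $r+s-1$ equal $n$.
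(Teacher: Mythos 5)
Your proposal is correct and follows the same route as the paper: the paper's proof consists precisely of citing the identities $\U_n(\Z_p)_n=\U_n(\F_p)_{(n)}=\U_{k+1}(\Z/p^{n-k}\Z)^{(n)}=1$ from the preceding lemma, leaving the functoriality of the three filtrations under continuous homomorphisms implicit. You have simply made that (standard) functoriality step explicit, which is a reasonable thing to do.
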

\begin{proof} (a) This follows from the fact that $\U_n(\Z_p)_n=1$.

(b) This follows from the fact that $\U_n(\F_p)_{(n)}=1$.

(c) This follows from the fact that $\U_k(\Z/p^{n+1-k}\Z)^{(n)}=1$.
\end{proof}

Part a) of the following theorem was first proved by I. Efrat in \cite[Theorem A]{Ef}.   (In \cite{MT1} we mentioned that we found a short direct proof of it but we did not write details of this proof.)
After completion of the previous version of our paper, we received from I. Efrat, the preprint \cite{Ef2}, where he also found a direct proof of this fact, and he also characterized  independently from us the $n$-th term $S^{(n)}$ of the descending $p$-central series of a free pro-$p$-group $S$   under a condition that $n<p$ (see \cite[Version 1, Theorem in Introduction]{Ef2}). In our correspondence with Efrat, we clarified that our use of Koch's Theorem 7.14 in \cite{Ko} was correct. In the new version, Efrat was able to remove the extra hypothesis $n<p$. (See \cite[Version 2, Theorem in Introduction]{Ef2}). 
 As we mentioned in the introduction, this characterization of $S^{(n)}$ is important for a possible formulation of an analogue of the Kernel $n$-Unipotent Conjecture for the descending $p$-central series rather than the $p$-Zassenhaus filtration of absolute Galois groups.
 
 In addition in \cite{Ef2} the discrete variant of the version of Theorem~\ref{thm:intersection} is our Theorem~\ref{thm:discrete free} below is also obtained.
  Efrat also points out in \cite[Introduction]{Ef2} that in Theorem~\ref{thm:discrete free} (b) we recover the result of Gr\"{u}n \cite{Gr}. (See also \cite[Example 6.2]{Ef2}.) We refer to the very interesting history of this result to \cite[Chapter 2, Section 7]{CM} and \cite[Section 6]{Ro}. %Following Efrat, we add just for the sake of completeness some additional references (\cite{Mag} and \cite{Wi2}) related to the result of Gr\"{u}n. 
In order to illustrate historical developments we have added additional references following I. Efrat; namely to the crucial work of W. Magnus \cite{Mag1} and \cite{Mag}  related to some filtration of free groups. Magnus's paper \cite{Mag1}  initiated this research.
In our proof we use only \cite{Wi2}. 
The paper of Gr\"{u}n is not easy to read and it contains some gaps. Again following Efrat we refer the reader to a nice exposition of Gr\"{u}n's work (see \cite{Roh}).
  One should also mention that the results in \cite{Ko1} form extensions of the previous results of  \cite{Sko}, who proved these results for odd primes $p$, while Koch was able to extend them for all primes. Lazard's thesis \cite{Laz} contains a systematical approach to filtrations  on groups obtained from filtrations on rings building on the pioneering work of \cite{Mag}, \cite{Wi2} and a number of others. In  Chapter 5, in particular, Lazard deals with both the Zassenhaus filtration and the descending $p$-central series. Zassenhaus's original dimension groups were considered in \cite{Zas}. 
  
  The mathematical content of \cite{Ef2} and our results in Section 2 were obtained independently from each other and they seem to nicely complement to each other.
\begin{thm}
\label{thm:intersection}
Let $S$ be a free pro-$p$-group and $n\geq1$. Then
\begin{enumerate} 
\item[(a)] $S_{(n)}$ is the intersection of all kernels of linear representations $\rho\colon S\to {\U}_n(\F_p)$. 
\item[(b)] $S_{n}$ is the intersection of all kernels of linear representations $\rho\colon S\to {\U}_n(\Z_p)$.
\item[(c)] $S^{(n)}$ is the intersection of all kernels of linear representations $\rho\colon S\to {\U}_{k+1}(\Z/p^{n-k}\Z)$, where $k=1,\ldots,n-1$.
\end{enumerate}
\end{thm}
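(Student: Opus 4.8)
The plan is to prove all three parts simultaneously by the same two-step scheme: one inclusion is already in hand, and the reverse inclusion is produced by exhibiting, for each element outside the relevant filtration term, a single unipotent representation that detects it. For part (a), Lemma~\ref{lem:3a}(b) already gives $S_{(n)}\subseteq \bigcap\ker(\rho\colon S\to\U_n(\F_p))$, and similarly Lemma~\ref{lem:3a}(a) and (c) handle the easy containments in (b) and (c). So the whole content is the opposite containment, and here is where Lemmas~\ref{lem:1a} and~\ref{lem:2a} do the work.

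First I would fix $\sigma\in S$ with $\sigma\notin S_{(n)}$ (resp. $\sigma\notin S_n$, $\sigma\notin S^{(n)}$). Since $S$ is free, write $S$ as the free pro-$p$-group on generators $x_1,\dots,x_d$; by Lemma~\ref{lem:1a}(c) (resp. (b), (d)) there is a multi-index $I=(i_1,\dots,i_k)$ with $1\le|I|=k<n$ such that $\epsilon_{I,\F_p}(\sigma)\ne 0$ (resp. $\epsilon_{I,\Z_p}(\sigma)\ne 0$, resp. $v(\epsilon_{I,\Z_p}(\sigma))<n-|I|$). Then I would feed exactly this $I$ into Lemma~\ref{lem:2a} with $\Lambda=\F_p$ (resp. $\Z_p$, resp. $\Z/p^{n-k}\Z$), obtaining a continuous homomorphism $\rho_{I,\Lambda}\colon S\to\U_{k+1}(\Lambda)$ whose $(1,k+1)$-entry at $\sigma$ is precisely $\epsilon_{I,\Lambda}(\sigma)$, which is nonzero (in the mod $p^{\,n-k}$ case, nonzero because the valuation is $<n-k$). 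Hence $\rho_{I,\Lambda}(\sigma)\ne 1$, so $\sigma$ is not in the intersection of the kernels. Since $k+1\le n$ in cases (a),(b), one can enlarge $\rho_{I,\F_p}$ (resp. $\rho_{I,\Z_p}$) to a representation into $\U_n$ by padding with an identity block, staying inside $\U_n(\F_p)$ (resp. $\U_n(\Z_p)$); in case (c) the target $\U_{k+1}(\Z/p^{n-k}\Z)$ is already one of the allowed groups since $1\le k\le n-1$. This establishes $\bigcap\ker\rho\subseteq S_{(n)}$ (resp. $\subseteq S_n$, $\subseteq S^{(n)}$), and combined with Lemma~\ref{lem:3a} finishes all three parts.

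The step I expect to require the most care is the arithmetic bookkeeping in part (c): one must check that the filtration condition "$v(\epsilon_{I,\Z_p}(\sigma))\ge n-|I|$ for all $1\le|I|<n$" from Lemma~\ref{lem:1a}(d) is exactly the right thing to negate, and that after reducing modulo $p^{\,n-k}$ the entry $\epsilon_{I,\Z/p^{n-k}\Z}(\sigma)$ is genuinely nonzero — i.e. the valuation being strictly less than $n-k$ survives reduction — and that for this one particular $k=|I|$ the group $\U_{k+1}(\Z/p^{n-k}\Z)$ appears in the allowed list. The other potential subtlety is verifying that the padded representation in (a),(b) still lands in $\U_n$ of the correct ring and is continuous, but that is routine. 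Once the correct $I$ is extracted from Lemma~\ref{lem:1a}, Lemma~\ref{lem:2a} is a black box that immediately yields the detecting representation, so there is no real obstacle beyond organizing these three cases cleanly.
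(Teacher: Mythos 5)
Your proposal is correct and follows essentially the same route as the paper: the easy inclusion from Lemma~\ref{lem:3a}, then detecting any $\sigma$ outside the filtration term via the multi-index supplied by Lemma~\ref{lem:1a} and the representation $\rho_{I,\Lambda}$ of Lemma~\ref{lem:2a}, padded into $\U_n$ where needed. The only point the paper makes that you omit is the initial reduction to finitely generated $S$ by a limit argument (\cite[Lemma 3.3.11]{RZ}), which is needed before one may speak of a finite generating set $x_1,\dots,x_d$ and the Magnus expansion.
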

\begin{proof}
It suffices to consider that the case $S$ is finitely generated by a limit argument (\cite[Lemma 3.3.11]{RZ}). 

a) By Lemma~\ref{lem:3a}, one has \[ S_{(n)}\subseteq \bigcap \ker(\rho\colon S\to \U_n(\F_p)).\]

Now let $\sigma$ be any element in $S\setminus S_{(n)}$. We shall show that there exists a group representation $\rho:S\to \U_n(\F_p)$ such that $\sigma\not\in \ker\rho$ and then we are done. 

In fact, by Lemma~\ref{lem:1a} c), there exists a multi-index $I=(i_1,\ldots,i_{k})$ of length $k$ with $1\leq k\leq n-1$ such that $\epsilon_{I,\F_p}(\sigma)\not=0$.
By Lemma~\ref{lem:2a}, the map $\rho\colon S\to \U_{k+1}(\F_p)$ defined by 
 \[\rho(\tau)_{\mu\nu}=\epsilon_{(i_\mu,\ldots,i_{\nu-1}),\F_p}(\tau) \quad (\mu<\nu),\]
   is a continuous group homomorphism. Since $k+1\leq n$, we can embed $\U_{k+1}(\F_p)\hookrightarrow \U_n(\F_p)$ and obtain a homomorphism still denoted by $\rho$, 
\[\rho\colon S\to \U_{k+1}(\F_p)\hookrightarrow \U_n(\F_p).\]
Then $\rho(\sigma)_{1,k+1}=\epsilon_{I,\F_p}(\sigma)\not=0$. Therefore $\sigma\not\in\ker\rho$, as desired.

b) We proceed in the same way as in part a).

c) We proceed in the same way as in part a). However, for the convenience of the reader, we include a full proof here.
 By Lemma~\ref{lem:3a} c), one has \[ S^{(n)}\subseteq \bigcap_{k=1}^{n-1} \bigcap_{\rho} \ker(\rho\colon S\to \U_{k+1}(\Z/p^{n-k}\Z)).\]

Now let $\sigma$ be any element in $S\setminus S^{(n)}$. %We shall show that there exists a group representation $\rho:S\to \U_n(\F_p)$ such that $\sigma\not\in \ker\rho$ and then we are done. 
By Lemma~\ref{lem:1a} d), there exists a multi-index $I=(i_1,\ldots,i_{k})$ of length $k$ with $1\leq k\leq n-1$ such that $v(\epsilon_{I,\Z_p}(\sigma))< n-k$. This implies that $\epsilon_{I,\Z/p^{n-k}\Z}(\sigma)\not=0$.  
By Lemma~\ref{lem:2a}, the map $\rho\colon S\to \U_{k+1}(\Z/p^{n-k}\Z)$ defined by 
 \[\rho(\tau)_{\mu\nu}=\epsilon_{(i_\mu,\ldots,i_{\nu-1}),\Z/p^{n-k}\Z}(\tau) \quad (\mu<\nu),\]
   is a continuous group homomorphism.  
We have $\rho(\sigma)_{1,k+1}=\epsilon_{I,\Z/p^{n-k}}(\sigma)\not=0$. Therefore $\sigma\not\in\ker\rho$, as desired.
\end{proof}

%\begin{rmk} If $R$ is a normal subgroup of $S$ and suppose that $R\leq S_{(n+1)}$ then for $G=S/R$ we have  $G_{(n)}$ is the intersection of all kernels of linear representations $\rho\colon G\to {\rm GL}_n(\F_p)$. \end{rmk}

Now let $S$ be an abstract (discrete) free group on  $d$ generators $x_1,\ldots,x_d$. Let $\Lambda$ be $\Z$ or $\Z/p^r\Z$ with $r\in \N$. We also have the Magnus homomorphism
\[
\psi\colon \Lambda[S] \to \Lambda \langle\langle X_1,\ldots,X_d\rangle\rangle, x_i\mapsto 1+X_i.
\]
Lemmas~\ref{lem:0a}-\ref{lem:3a} have their obvious counterparts in this case. These lemmas still hold true in the discrete setting by some obvious changes, for example, by replacing $\Z_p$ by $\Z$, completed group algebras by group algebras. Note also that the Magnus homomorphism $\psi$ is injective (see \cite[Lemma 8.1]{Mo} and \cite[Lemma 4.4]{Ko}).   The counterpart of Lemma~\ref{lem:1a} b) is ensured by a result of Witt \cite[Satz 11]{Wi2}, see also \cite[Proposition 8.5]{Mo}. By proceeding in the same way as in the proof of Theorem~\ref{thm:intersection}, we immediately obtain the following discrete version of Theorem~\ref{thm:intersection}.
\begin{thm} 
\label{thm:discrete free}
 Let $S$ be a finitely generated free group and $n\geq 1$. Then the following statements are true.
\begin{enumerate} 
\item[(a)] $S_{(n)}$ is the intersection of all kernels of linear  representations $\rho\colon S\to {\U}_n(\F_p)$.
\item[(b)] $S_{n}$ is the intersection of all kernels of linear representations $\rho\colon S\to {\U}_n(\Z)$.
\item[(c)] $S^{(n)}$ is the intersection of all kernels of linear representations $\rho\colon S\to {\U}_{k+1}(\Z/p^{n-k}\Z)$, where $k=1,\ldots,n-1$.
\end{enumerate}
\end{thm}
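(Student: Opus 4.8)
The plan is to mimic the proof of Theorem~\ref{thm:intersection} essentially verbatim, since all the ingredients have discrete counterparts. First I would reduce to the case where $S$ is finitely generated; but in fact the statement already assumes $S$ is finitely generated free on generators $x_1,\ldots,x_d$, so no limit argument is needed. The key structural facts I will invoke are: the discrete Magnus homomorphism $\psi\colon\Lambda[S]\to\Lambda\langle\langle X_1,\ldots,X_d\rangle\rangle$ is injective; the discrete analogues of Lemma~\ref{lem:1a}(a)--(d) hold (with $\Z_p$ replaced by $\Z$, the $p$-adic valuation still making sense on $\Z$ and on $\Z/p^r\Z$, part (b) being Witt's Satz~11, part (c) being the Jennings--Brauer-type characterization of the Zassenhaus filtration, part (d) being Koch's characterization of the descending $p$-central series); the discrete analogue of Lemma~\ref{lem:2a}, i.e.\ each multi-index $I=(i_1,\ldots,i_k)$ yields a homomorphism $\rho_{I,\Lambda}\colon S\to\U_{k+1}(\Lambda)$; and the discrete analogue of Lemma~\ref{lem:3a}, which rests only on $\U_n(\Z)_n=\U_n(\F_p)_{(n)}=\U_r(\Z/p^s\Z)^{(r+s-1)}=1$ --- and these group-theoretic identities are insensitive to whether $S$ was taken pro-$p$ or discrete, since they are proved by exhibiting a \emph{surjection} from a free group onto $\U_n$ and checking that the relevant filtration step of that free group dies.

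For each of (a), (b), (c) the argument then has two halves. The ``$\subseteq$'' inclusion follows immediately from the discrete Lemma~\ref{lem:3a}: every homomorphism into $\U_n(\F_p)$ (resp.\ $\U_n(\Z)$, resp.\ $\U_{k+1}(\Z/p^{n-k}\Z)$) kills $S_{(n)}$ (resp.\ $S_n$, resp.\ $S^{(n)}$). For the reverse inclusion, take $\sigma\in S$ not in the relevant filtration step; by the appropriate part of the discrete Lemma~\ref{lem:1a} there is a multi-index $I=(i_1,\ldots,i_k)$ with $1\le k\le n-1$ detecting this: $\epsilon_{I,\F_p}(\sigma)\neq 0$ in case (a), $\epsilon_{I,\Z}(\sigma)\neq 0$ in case (b) (here one uses Witt's theorem to know $\sigma\notin S_n$ is witnessed by a \emph{nonzero} integer coefficient of length $<n$), and $v(\epsilon_{I,\Z}(\sigma))<n-k$, hence $\epsilon_{I,\Z/p^{n-k}\Z}(\sigma)\neq 0$, in case (c). Feed $I$ into the discrete Lemma~\ref{lem:2a} to get $\rho_{I,\Lambda}\colon S\to\U_{k+1}(\Lambda)$ with $(1,k+1)$-entry $\rho_{I,\Lambda}(\sigma)_{1,k+1}=\epsilon_{I,\Lambda}(\sigma)\neq 0$, and compose with the standard block inclusion $\U_{k+1}(\F_p)\hookrightarrow\U_n(\F_p)$ (resp.\ $\U_{k+1}(\Z)\hookrightarrow\U_n(\Z)$; in case (c) no further inclusion is needed since $k+1$ and $n-k$ are exactly the parameters indexing the family). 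This produces a representation not killing $\sigma$, completing the proof.

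The only point that genuinely requires care --- and hence the main obstacle --- is verifying that the three discrete analogues of Lemma~\ref{lem:1a}(b),(c),(d) really are available in the literature with the precise form stated. Part (b) is classical (Witt \cite{Wi2}, Magnus \cite{Mag1}, \cite{Mag}; see also \cite[Proposition 8.5]{Mo}). Part (c), the discrete Jennings--Brauer identification $S_{(n)}=\{s\in S\mid s-1\in I_{\F_p}^n(S)\}$ for the group algebra $\F_p[S]$, is standard but one should cite it carefully (e.g.\ Quillen, or \cite[Chapter 6]{Laz} in the graded-ring formulation). Part (d), the discrete analogue of Koch's Theorem~7.14 characterizing $S^{(n)}$ via $p$-adic valuations of Magnus coefficients, is the subtlest; here I would appeal directly to \cite[Lemma 4.4]{Ko} (injectivity of $\psi$ over $\Z$) combined with the same valuation bookkeeping used in the pro-$p$ case, exactly as Koch/Skopin carry it out. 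Once these three facts are in hand the proof is a transcription, which is why I would simply write: ``By proceeding in the same way as in the proof of Theorem~\ref{thm:intersection}, using the discrete analogues of Lemmas~\ref{lem:0a}--\ref{lem:3a}, we obtain the result.''

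\begin{proof}
Since $S$ is finitely generated free, no limit argument is needed. Lemmas~\ref{lem:0a}--\ref{lem:3a} have evident discrete counterparts: the discrete Magnus homomorphism $\psi\colon\Lambda[S]\to\Lambda\langle\langle X_1,\ldots,X_d\rangle\rangle$ is injective (\cite[Lemma 8.1]{Mo}, \cite[Lemma 4.4]{Ko}); Lemma~\ref{lem:1a}(a) holds since $\psi$ is a ring homomorphism; the discrete version of (b) is Witt's \cite[Satz 11]{Wi2} (see also \cite[Proposition 8.5]{Mo}); the discrete version of (c) follows from the Jennings--Brauer description of the $p$-Zassenhaus filtration, $S_{(n)}=\{s\in S\mid s-1\in I_{\F_p}^n(S)\}$; the discrete version of (d) follows from \cite[Theorem 7.14]{Ko} together with the injectivity of $\psi$ over $\Z$. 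Lemma~\ref{lem:2a} carries over verbatim, producing for each multi-index $I=(i_1,\ldots,i_k)$ a homomorphism $\rho_{I,\Lambda}\colon S\to\U_{k+1}(\Lambda)$. Finally, the identities $\U_n(\Z)_n=\U_n(\F_p)_{(n)}=\U_r(\Z/p^s\Z)^{(r+s-1)}=1$ do not depend on the pro-$p$ versus discrete distinction, so the discrete analogue of Lemma~\ref{lem:3a} holds.

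With these in hand, each of (a), (b), (c) follows by the argument of Theorem~\ref{thm:intersection}. The inclusion of the filtration step in the intersection of kernels is the discrete Lemma~\ref{lem:3a}. For the reverse inclusion, given $\sigma\in S$ outside the relevant filtration step, the appropriate part of the discrete Lemma~\ref{lem:1a} yields a multi-index $I$ of length $k$ with $1\le k\le n-1$ and $\epsilon_{I,\Lambda}(\sigma)\neq 0$ (with $\Lambda=\F_p$, $\Z$, $\Z/p^{n-k}\Z$ respectively; in case (b) using that $\sigma\notin S_n$ is detected by a nonzero integer coefficient of length $<n$, and in case (c) that $v(\epsilon_{I,\Z}(\sigma))<n-k$ forces $\epsilon_{I,\Z/p^{n-k}\Z}(\sigma)\neq 0$). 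Then $\rho_{I,\Lambda}\colon S\to\U_{k+1}(\Lambda)$, composed when necessary with a block inclusion $\U_{k+1}\hookrightarrow\U_n$, satisfies $\rho_{I,\Lambda}(\sigma)_{1,k+1}=\epsilon_{I,\Lambda}(\sigma)\neq 0$, so $\sigma$ is not in its kernel. This completes the proof.
\end{proof}
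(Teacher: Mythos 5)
Your proposal is correct and follows essentially the same route as the paper: the authors likewise observe that Lemmas~\ref{lem:0a}--\ref{lem:3a} have obvious discrete counterparts (citing the injectivity of the discrete Magnus homomorphism via \cite[Lemma 8.1]{Mo} and \cite[Lemma 4.4]{Ko}, and Witt's \cite[Satz 11]{Wi2} for the counterpart of Lemma~\ref{lem:1a}(b)) and then transcribe the proof of Theorem~\ref{thm:intersection}. Your additional care about sourcing the discrete analogues of Lemma~\ref{lem:1a}(c) and (d) is reasonable but goes beyond what the paper itself records.
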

%%%%%%%%%%%%%%%%%%%%%%%%%%%%%%%%%%%%%%%%%%%%%%%%%%%%%%%%%%%%%%%%%%%
%%%%%%%%%%%%%%%%%%%%%%%%%%%%%%%%%%%%%%%%%%%%%%%%%%%%%%%%%%%%%%%%%%%%
%%%%%%%%%%%%%%%%%%%%%%%%%%%%%%%%%%%%%%%%%%%%%%%%%%%%%%%%%%%%%%%%%%
\section{Some results in unipotent representations of finite groups}

Let $K$ be a field of characteristic $p>0$ and let $s\geq 1$ be an integer. Let $X$ be the square matrix of size $n:=1+p^s$ having zero everywhere except for 1's in the positions $(i,i+1)$, $1\leq i \leq n-1$.  Note that $1+X$ is in $\U_n(K)$. We denote by $K[X]$ the $K$-algebra generated by $X$ in the full matrix algebra ${\rm Mat}_n(K)$.
\begin{lem}
Let the notation be as above. Then the following are true.
\begin{enumerate}
\item $X^n=0$ but $X^{n-1}\not=0$.
\item For $f(X)=a_l X^l + a_{l+1}X^{l+1}+\cdots\;\in K[X]$ with $a_l\not=0$, $f(X)$ is a unit in $K[X]$ if and only if $l=0$.
\item Every $K$-algebra automorphism $\varphi: K[X]\to K[X]$ is determined by the value of $\varphi(X)$ and has the form $\varphi(X)=Xf(X)$ with $f(X)$ a unit in $K[X]$. Conversely, if $f(X)$ is a unit in $K[X]$ then $\varphi(X)=Xf(X)$ defines a $K$-algebra automorphism $\varphi$ of $K[X]$.
\end{enumerate}
\end{lem}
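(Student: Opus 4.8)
\emph{Plan of proof.} The plan is to identify the matrix algebra $K[X]$ with the truncated polynomial ring $K[T]/(T^n)$ and then argue inside that ring, which is local Artinian. First I would dispose of (1) by a direct computation: $X$ is the nilpotent Jordan block of size $n$, so on the standard basis $e_1,\dots,e_n$ one has $Xe_{i+1}=e_i$ and $Xe_1=0$; an immediate induction gives that $X^k$ is the matrix with $1$'s in positions $(i,i+k)$ for $1\le i\le n-k$. In particular $X^{n-1}\ne 0$ and $X^n=0$. Hence the minimal polynomial of $X$ is $T^n$, so evaluation at $X$ yields a $K$-algebra isomorphism $K[T]/(T^n)\cong K[X]$. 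I would then record the facts used below: $K[X]$ is local with maximal ideal $\mathfrak m:=XK[X]$; this ideal is nilpotent with $\mathfrak m^n=0$; the residue field is $K$; and $\{1,X,\dots,X^{n-1}\}$ is a $K$-basis. (The hypotheses $\Char K=p$ and $n=1+p^s$ play no further role here, beyond guaranteeing $n\ge 3$.)

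Part (2) is then immediate from this local structure. If the lowest nonvanishing term of $f(X)$ has index $l=0$, write $f(X)=a_0(1-g)$ with $g\in\mathfrak m$; then $g$ is nilpotent and $a_0^{-1}(1+g+\cdots+g^{n-1})$ is an inverse, so $f(X)$ is a unit. If $l\ge 1$, then $f(X)\in\mathfrak m$ is nilpotent and hence cannot be a unit in the nonzero ring $K[X]$.

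For (3), in the forward direction: since $X$ generates $K[X]$ as a $K$-algebra, an automorphism $\varphi$ is determined by $\varphi(X)$; moreover $\varphi$ must carry the nilpotent element $X$ to a nilpotent element, i.e. into $\mathfrak m$, so $\varphi(X)=Xg(X)$ with $g(X)=c_1+c_2X+\cdots$. The one step needing an argument is $c_1\ne 0$ (which then makes $g(X)$ a unit by (2)): if $c_1=0$ then $\varphi(X)\in\mathfrak m^2$, hence $\varphi(K[X])=K[\varphi(X)]\subseteq K\cdot 1+\mathfrak m^2$, a subspace all of whose elements have zero coefficient on the basis vector $X$ — so $X\notin\varphi(K[X])$, contradicting surjectivity. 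For the converse, given a unit $f(X)\in K[X]$, set $a:=Xf(X)$; then $a^n=X^nf(X)^n=0$, so, via $K[X]\cong K[T]/(T^n)$, there is a well-defined $K$-algebra endomorphism $\varphi$ with $\varphi(X)=a$. Since $K[X]$ is finite-dimensional over $K$, it suffices to show $\varphi$ is surjective; for this I would note that, writing $f(X)=a_0+a_1X+\cdots$ with $a_0\ne 0$, one has $\varphi(X)^k=X^kf(X)^k\equiv a_0^kX^k\pmod{\mathfrak m^{k+1}}$, so $1,\varphi(X),\dots,\varphi(X)^{n-1}$ are triangular with nonzero diagonal coefficients $a_0^k$ relative to the filtration $K[X]\supseteq\mathfrak m\supseteq\mathfrak m^2\supseteq\cdots$, hence form a $K$-basis; they lie in the image, so $\varphi$ is onto.

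I expect the only real content to be in (3): extracting $c_1\ne 0$ from surjectivity in the first direction, and, in the converse, confirming that $X\mapsto Xf(X)$ really defines an automorphism and not merely an endomorphism — both handled by tracking leading terms with respect to the $\mathfrak m$-adic filtration. Parts (1) and (2) are routine once the identification $K[X]\cong K[T]/(T^n)$ is in place.
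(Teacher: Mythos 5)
Your proof is correct and follows essentially the same route as the paper: the converse half of (3) is the paper's triangular-matrix argument (powers of $a_0$ on the diagonal of the matrix of $\varphi$ in the basis $1,X,\dots,X^{n-1}$), merely repackaged via the $\mathfrak{m}$-adic filtration on $K[X]\cong K[T]/(T^n)$. The only genuine difference is how you rule out $\varphi(X)\in\mathfrak{m}^2$ in the forward direction of (3): you use surjectivity ($X\notin K\cdot 1+\mathfrak{m}^2\supseteq\varphi(K[X])$), whereas the paper uses injectivity ($l\ge 2$ would force $\varphi(X^{n-1})=X^{l(n-1)}f(X)^{n-1}=0$ because $l(n-1)>n$); both arguments are valid.
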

\begin{proof} (1) and (2) are  straightforward. 

(3) Assume that $\varphi \colon K[X]\to K[X]$ is a $K$-algebra automorphism. Then we write 
\[
\varphi(X)=a_l X^l + a_{l+1}X^{l+1}+\cdots=: X^l f(X) \in K[X],
\]
where $a_l\not=0$. Then $f(X)$ is a unit in $K[X]$. It remains to show that $l=1$.
Since $X$ is not a unit, $\varphi(X)$  is not a unit either. Hence $l\geq 1$. But $l$ cannot be  $\geq 2$. Otherwise we would have
\[ 
\varphi(X^{n-1})= X^{l(n-1)}f(X)^{n-1}=0,
\]
because $l(n-1)\geq 2(n-1)>n$ as $n=p^s+1\geq 3$. This is a contradiction since $\varphi$ is injective and $X^{n-1}\not=0$. Therefore $l=1$, as desired.

Conversely, assume that $f(X)= a_0+a_1X+\cdots$, $a_0\not=0$, is a unit in $K[X]$. Let $\varphi\colon K[X]\to K[X]$ be a $K$-algebra endomorphism defined by $\varphi(X)=Xf(X)$. Then $\varphi(X^i)=X^if(X)^i$ and the matrix $M$ of $\varphi$ with respect to the base $\{1,X,\ldots,X^{n-1}\}$ of the $K$-vector space $K[X]$ is a lower triangular matrix with $1, a_0,a_0^2,\ldots, a_0^{n-1}$ on the diagonal. Since $\det (M)=\prod_{i=1}^{n-1}a_0^i\not=0$, $\varphi$ is an isomorphism as a $K$-linear map. We denote by $\psi$ its inverse as a $K$-linear map. Then we can check that $\psi$ is in fact  a $K$-algebra homomorphism. Therefore, $\varphi$ is a $K$-algebra automorphism. 
\end{proof}
The following Lemma~\ref{lem:centralizer} admits a simple direct proof which we shall omit.
\begin{lem}
\label{lem:centralizer}
The centralizer of $X$ in ${\rm Mat}_n(K)$ is $K[X]$.
\end{lem}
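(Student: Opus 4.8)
The plan is to exhibit the last standard basis vector $v_n$ of $K^n$ as a cyclic vector for $X$, and then to observe that any matrix commuting with $X$ is completely determined by its effect on $v_n$. Write $C$ for the centralizer of $X$ in ${\rm Mat}_n(K)$. The inclusion $K[X]\subseteq C$ is immediate, so the content is the reverse inclusion.

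First I would set up the cyclic vector. Let $v_1,\dots,v_n$ be the standard basis of $K^n$ (as column vectors). Since $X=\sum_{i=1}^{n-1}e_{i,i+1}$, one has $Xv_j=v_{j-1}$ for $2\le j\le n$ and $Xv_1=0$; hence $X^jv_n=v_{n-j}$ for $0\le j\le n-1$, so that $v_n,Xv_n,\dots,X^{n-1}v_n$ is, up to reordering, the standard basis of $K^n$. Equivalently, $K^n=K[X]\,v_n$; note this computation also recovers part (1) of the preceding lemma, since it shows $X^{n-1}v_n=v_1\neq 0$ while $X^n v_n = 0$. Now take $A\in C$. As $Av_n\in K^n=K[X]v_n$, there is a polynomial $f$ with $Av_n=f(X)v_n$. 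For $0\le j\le n-1$,
\[
Av_{n-j}=AX^jv_n=X^jAv_n=X^jf(X)v_n=f(X)X^jv_n=f(X)v_{n-j},
\]
using $AX=XA$ and that $f(X)$ is a polynomial in $X$. Thus $A$ and $f(X)$ agree on a basis of $K^n$, so $A=f(X)\in K[X]$; hence $C\subseteq K[X]$ and therefore $C=K[X]$.

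There is no real obstacle: the only point that must be checked is that $v_n$ generates $K^n$ as a module over $K[X]$, and this is immediate from the shape of $X$. One could alternatively phrase the whole lemma as the classical statement that a non-derogatory matrix — one whose minimal polynomial equals its characteristic polynomial — has centralizer equal to the subalgebra it generates; here $X$ qualifies because, by part (1) of the preceding lemma, its minimal polynomial is $t^n$.
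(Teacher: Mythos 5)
Your proof is correct; the paper explicitly omits its own proof of this lemma (``admits a simple direct proof which we shall omit''), and your cyclic-vector argument is precisely the standard direct proof the authors have in mind. Nothing to add.
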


The following result is a generalization  of \cite[Lemma 4.2]{Ja}. In this lemma only the case of ${\rm char}(K) = 2$ was considered.
\begin{lem}
\label{lem:Janusz}
 Any $K$-algebra automorphism $\varphi$ of $K[X]$ is induced by conjugation with some upper triangular matrix $A\in {\rm GL}_n(K)$. For a given $\varphi$ there is  a unique such $A$ which has only zeros in the last column except for a $1$ in the $(n,n)$ position. Any other matrix inducing $\varphi$ has the form $AB$ for some unit $B$ in $K[X]$.
\end{lem}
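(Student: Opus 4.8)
\emph{Proof proposal.} The strategy is to write down the matrix $A$ explicitly and then read off the uniqueness and coset statements from the computation of the centralizer of $X$. To begin, I would use the description of the $K$-algebra automorphisms of $K[X]$ obtained in the opening lemma of this section: $\varphi$ is determined by $\varphi(X)$, and $Y := \varphi(X) = Xf(X)$ for some unit $f(X) = a_0 + a_1X + \cdots$ of $K[X]$, so $a_0 \ne 0$. Since $X^n = 0$ we have $Y^k = X^kf(X)^k$, hence $Y^{n-1} = a_0^{n-1}X^{n-1} \ne 0$ and $Y^n = 0$; thus $Y$, like $X$, is a regular nilpotent matrix. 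Recalling that $Xe_j = e_{j-1}$ for $j \ge 2$ and $Xe_1 = 0$ (so that $X^ke_n = e_{n-k}$), the last standard basis vector $e_n$ is a cyclic vector for both $X$ and $Y$.

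The key step is to define $A \in {\rm Mat}_n(K)$ by $Ae_{n-k} := Y^ke_n$ for $k = 0,1,\dots,n-1$, equivalently $Ae_j := Y^{n-j}e_n$. From $Y^ke_n = X^kf(X)^ke_n = a_0^k e_{n-k} + (\text{a linear combination of } e_{n-k-1},\dots,e_1)$ one sees that the $(n-k)$-th column of $A$ has $a_0^k$ in the diagonal position and is otherwise supported strictly above the diagonal; therefore $A$ is upper triangular with diagonal entries $a_0^{n-1}, \dots, a_0, 1$, so $A \in {\rm GL}_n(K)$, and its last column is $e_n$, i.e. zero except for a $1$ in the $(n,n)$ position. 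Checking on the basis, $(YA)e_j = Y^{n-j+1}e_n = Ae_{j-1} = (AX)e_j$ for $2 \le j \le n$ while $(YA)e_1 = Y^ne_n = 0 = (AX)e_1$, so $YA = AX$, i.e. $AXA^{-1} = Y$. Conjugation by $A$ then sends $g(X) \mapsto g(AXA^{-1}) = g(Y)$, a $K$-algebra homomorphism $K[X] \to K[Y] = \varphi(K[X]) = K[X]$ which agrees with $\varphi$ on the generator $X$; since $X$ generates $K[X]$, conjugation by $A$ restricts to $\varphi$. This establishes existence, with the required normalization of the last column.

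It remains to handle uniqueness and the coset statement. If $A'$ is any invertible matrix inducing $\varphi$, then $A'X(A')^{-1} = \varphi(X) = Y = AXA^{-1}$, so $B := A^{-1}A'$ commutes with $X$ and hence lies in $K[X]$ by Lemma~\ref{lem:centralizer}; being invertible, $B$ is a unit in $K[X]$, and $A' = AB$ — and conversely any such $AB$ induces $\varphi$, since $B$ commutes with $X$. If in addition $A'$ has last column $e_n$, then $Be_n = A^{-1}e_n = e_n$; since $e_n$ is cyclic for $X$, the evaluation map $K[X] \to K^n$, $g(X) \mapsto g(X)e_n$, is injective, so $B = 1$ and $A' = A$, which gives the uniqueness. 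I do not anticipate a genuine obstacle here: essentially the only input beyond linear algebra is the normal form $\varphi(X) = Xf(X)$ with $f$ a unit, already recorded above; once $e_n$ is recognized as a common cyclic vector for $X$ and $Y = \varphi(X)$, the construction of $A$ and all three assertions follow directly.
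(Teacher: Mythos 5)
Your proposal is correct and follows essentially the same route as the paper: indeed, since $X^{n-j}e_n=e_j$, your matrix $Ae_j=\varphi(X)^{n-j}e_n=f(X)^{n-j}e_j$ coincides with the paper's $Av_i=f(X)^{n-i}v_i$, and the uniqueness argument via the centralizer Lemma~\ref{lem:centralizer} is identical. The only cosmetic difference is that you package the intertwining check through the cyclic vector $e_n$ rather than verifying $AXv_i=Xf(X)Av_i$ directly.
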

\begin{proof}
 Let $\varphi(X)=Xf(X)$, where $f(X)$ is a unit in $K[X]$. Let $v_1,v_2,\ldots, v_n$ be the basis of $K^n$ such that
\[
 Xv_1=0; Xv_i=v_{i-1}, \;\forall 2\leq i\leq n.
\]
Define $A$ to be the matrix such that 
\[
 Av_i=f(X)^{n-i}v_i.
\]
Since $f(X)$ is a unit, $A$ is invertible.
And for $i\geq 2$, we have
\[
 AXv_i=Av_{i-1}=f(X)^{n-i+1}v_{i-1}=f(X)^{n-i+1}Xv_i= Xf(X)f(X)^{n-i}v_i=Xf(X)Av_i.
\]
The first and the last term are equal also for $i=1$. In fact, 
\[Xf(X)Av_1=Xf(X)f(X)^{p^s}v_1=f(X)f(X)^{p^s}Xv_1=0. \]
 Therefore $AX=Xf(X)A$ and thus
\[
 AXA^{-1}=\varphi(X).
\]

When $f(X)^{n-i}v_i$ is expressed in terms of  $v_j$, only $v_j$ with $j\leq i$ can have nonzero coefficients. (The shape of the polynomial $f(X)^{n-i}$ is not important for this observation.) Thus $A$ is upper triangular. Moreover, since $Av_n=v_n$, the last column of $A$ has only zero entries except for a 1 at the $(n,n)$ position. Also note that if $f(X)=1+aX+\cdots$ then $A$ is a unipotent upper triangular matrix.

Now assume that $A_0$ also induces $\varphi$. Then $A_0XA_0^{-1}=AXA^{-1}$. Hence $A^{-1}A_0X=XA^{-1}A_0$. Thus $A^{-1}A_0=B$, i.e., $A_0=AB$ with $B$ a unit in $K[X]$. Assume further that the last column of $A_0$ contains only zero except for a 1 at the $(n,n)$ position. Then $A_0v_n=v_n$. Hence
\[ Bv_n= A^{-1}A_0v_n=A^{-1}v_n=v_n.\]
Writing $B=a_0+a_1X+\cdots$  implies that $a_0=1$ and $a_i=0$ for all $1\leq i\leq n-1$. Hence $B=1$ and $A_0=AB=A$.
\end{proof}
\begin{lem}
\label{lem:2}
 Let the notation be as above. Let $B=1+X$. Let $k$ be a positive integer. Then
\begin{enumerate}
 
\item There exists a unique matrix $A$ in $\U_n(K)$ with only zero entries in the last column except at the $(n,n)$ position  such that 
\[
 ABA^{-1}=B^{1+p^k}.
\]
\item If $p=2$ then there exists a unique matrix $A$ in $\U_n(K)$ with only zero entries in the last column except at the $(n,n)$ position  such that 
\[
 ABA^{-1}=B^{-(1+2^k)}.
\]
\end{enumerate}
\end{lem}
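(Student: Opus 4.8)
The plan is to reduce both statements to the automorphism description in Lemma~\ref{lem:Janusz}. The key observation is that conjugation by a unit $B = 1+X$ in $K[X]$ on $\mathrm{Mat}_n(K)$ is an inner automorphism of the matrix algebra, but it need not stabilize $K[X]$ — rather, I want to exhibit a matrix $A$ that conjugates $B$ into a specified power of $B$, and then pin down $A$ uniquely using the normalization in the last column.

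First I would compute $B^{1+p^k}$ inside $K[X]$. Since $\Char(K) = p$ and $n = 1+p^s$, we have $X^n = 0$, so $B^{p^k} = (1+X)^{p^k} = 1 + X^{p^k}$ whenever $p^k \leq n-1 = p^s$, i.e. $k \le s$; and when $k > s$ one gets $B^{p^k} = 1$ because $X^{p^k} = 0$. Either way $B^{1+p^k} = (1+X)(1+X^{p^k}) = 1 + X + X^{p^k} + X^{p^k+1} =: 1 + Xg(X)$ for an explicit polynomial $g(X) = 1 + X^{p^k-1} + X^{p^k}$ with nonzero constant term, hence $g(X)$ is a unit in $K[X]$ by part (2) of the first lemma of Section~3. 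Thus $B \mapsto B^{1+p^k}$, written additively as $1+X \mapsto 1 + Xg(X)$, extends to the $K$-algebra automorphism $\varphi$ of $K[X]$ sending $X \mapsto Xg(X)$ (using part (3) of that lemma: $g(X)$ a unit gives a well-defined automorphism). Now apply Lemma~\ref{lem:Janusz}: there is a unique upper triangular $A \in \mathrm{GL}_n(K)$ with zeros in the last column except a $1$ in position $(n,n)$ such that $AXA^{-1} = \varphi(X) = Xg(X)$; and since $g(X) = 1 + X^{p^k-1}+X^{p^k}$ has the form $1 + aX + \cdots$ only when $p^k - 1 = 1$ — so I should instead invoke the final remark in the proof of Lemma~\ref{lem:Janusz} more carefully. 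Actually the cleaner route: $\varphi$ fixes the filtration by powers of $X$, and the explicit $A$ constructed there via $Av_i = g(X)^{n-i} v_i$ is unipotent precisely because each $g(X)^{n-i} = 1 + (\text{higher order in }X)$, so $A \in \U_n(K)$. Then $ABA^{-1} = A(1+X)A^{-1} = 1 + Xg(X) = B^{1+p^k}$, and uniqueness within $\U_n(K)$ with the prescribed last column is exactly the uniqueness clause of Lemma~\ref{lem:Janusz}. This proves (1).

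For (2), assume $p=2$, so $n = 1+2^s$. I would repeat the computation with the exponent $-(1+2^k)$. In characteristic $2$, $B^{-1} = (1+X)^{-1} = 1 + X + X^2 + \cdots + X^{n-1}$ is a unit in $K[X]$ with constant term $1$, and $B^{-2^k} = (B^{-1})^{2^k}$, or more simply $B^{-(1+2^k)} = (B^{1+2^k})^{-1} = (1 + Xg(X))^{-1}$ with $g$ as above; its inverse in $K[X]$ again has the shape $1 + X\tilde g(X)$ with $\tilde g(X) = 1 + (\text{higher order})$ a unit, because inverses of units of the form $1 + (\text{order} \ge 1)$ again have that form. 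So $B \mapsto B^{-(1+2^k)}$ corresponds to the automorphism $X \mapsto X\tilde g(X)$ of $K[X]$, and Lemma~\ref{lem:Janusz} yields the unique $A \in \U_n(K)$ with the normalized last column and $ABA^{-1} = B^{-(1+2^k)}$.

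The main obstacle I anticipate is bookkeeping rather than conceptual: verifying that the polynomials $g(X)$ (and its inverse in the $p=2$ case) are genuinely units in $K[X]$ and have constant term $1$ so that the conjugating matrix $A$ lands in $\U_n(K)$ rather than merely in the upper triangular group, and correctly handling the truncation $X^n = 0$ so that $(1+X)^{p^k}$ collapses to $1 + X^{p^k}$ (or to $1$ when $p^k \ge n$). Once these elementary identities in $K[X]$ are in hand, existence and uniqueness are immediate transcriptions of Lemma~\ref{lem:Janusz}, so I would keep that verification brief.
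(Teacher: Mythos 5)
Your proposal is correct and follows essentially the same route as the paper: both reduce the statement to Lemma~\ref{lem:Janusz} by observing that $B^{1+p^k}=1+Xg(X)$ (resp.\ $B^{-(1+2^k)}=1+X\tilde g(X)$ in characteristic $2$) with $g$, $\tilde g$ units of constant term $1$, so that $X\mapsto Xg(X)$ defines a $K$-algebra automorphism of $K[X]$ whose normalized conjugating matrix is automatically unipotent. The paper writes the second automorphism as $\varphi(X)=X(1+X^{2^k-1}+X^{2^k})(1+X)^{-(1+2^k)}$, which is exactly your $X\tilde g(X)$, so the two arguments coincide.
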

\begin{proof}
% Since $X$ is nilpotent, so are $X^{p^k-1}$, $X^{p^k}$ and their sum $X^{p^k-1}+X^{p^k}$. Hence $1+X^{p^k-1}+X^{p^k}$ is a unit in $K[X]$. Then 
We define a $K$-algebra automorphism $\varphi(X)$ of $K[X]$ by
\[
 \varphi(X):=
\begin{cases}
 X(1+X^{p^k-1}+X^{p^k}) &\text{ if we are in Part (1),}\\
 X(1+X^{2^k-1}+X^{2^k})(1+X)^{-(1+2^k)} &\text{ if we are in Part (2).}
\end{cases}
\]
Then  
\[
\varphi(1+X)= 
\begin{cases}
1+ X(1+X^{p^k-1}+X^{p^k})=(1+X)^{1+p^k} &\text{ if we are in Part (1),}\\
1+X(1+X^{2^k-1}+X^{2^k})(1+X)^{-(1+2^k)}=(1+X)^{-(1+2^k)} &\text{ if we are in Part (2).}
\end{cases} 
\]
By Lemma~\ref{lem:Janusz}, there exists an upper triangular matrix $A$ such that 
\[
 AXA^{-1}=\varphi(X),
\]
or equivalently, 
\[
 A(1+X)A^{-1}=\varphi(1+X).
\]
Also from the construction of $A$ as in the proof of Lemma~\ref{lem:Janusz}, we see that $A$ is a unipotent matrix. 
\end{proof}
When $p=2$, we also have the following result.  
\begin{lem}
\label{lem:3}
 Let the notation be as above. Let $B=1+X$.
 Assume that $p=2$.  Then there exists a unique matrix $A$ in $\U_n(K)$ with only zero entries in the last column except at the $(n,n)$ position  such that 
\[
 ABA^{-1}=B^{-1}.
\]
\end{lem}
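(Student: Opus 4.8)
The plan is to mimic the proof of Lemma~\ref{lem:2}, obtaining $A$ as the conjugation matrix attached by Lemma~\ref{lem:Janusz} to a suitably chosen $K$-algebra automorphism of $K[X]$. First I would record the elementary identity, valid because $\Char(K)=2$ and $X^n=0$, that
\[
(1+X)^{-1}=1+X+X^2+\cdots+X^{n-1}\in K[X],
\]
so that $(1+X)^{-1}$ is a unit in $K[X]$ with constant term $1$. Equivalently $X(1+X)^{-1}=(1+X)^{-1}-1$, and a one-line computation using $2X=0$ gives $1+X(1+X)^{-1}=(1+X)^{-1}(1+2X)=(1+X)^{-1}$.

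Next I would define the $K$-algebra endomorphism $\varphi$ of $K[X]$ by $\varphi(X)=X(1+X)^{-1}=Xf(X)$ with $f(X)=(1+X)^{-1}$ a unit. By the description of the $K$-algebra automorphisms of $K[X]$ (the first lemma of this section, part (3)), $\varphi$ is in fact a $K$-algebra automorphism, and by the computation above $\varphi(1+X)=(1+X)^{-1}=B^{-1}$. Applying Lemma~\ref{lem:Janusz} to $\varphi$ produces an upper triangular $A\in{\rm GL}_n(K)$ with $AXA^{-1}=\varphi(X)$, unique among such matrices having only zeros in the last column apart from a $1$ in position $(n,n)$; and since $f(X)=1+X+\cdots$ is of the form $1+aX+\cdots$, the last remark in the proof of Lemma~\ref{lem:Janusz} shows that this $A$ is unipotent, i.e. $A\in\U_n(K)$. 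Then $ABA^{-1}=A(1+X)A^{-1}=\varphi(1+X)=B^{-1}$, as desired.

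For uniqueness, suppose $A_0\in\U_n(K)$ has only zeros in its last column apart from a $1$ at $(n,n)$ and satisfies $A_0BA_0^{-1}=B^{-1}$. Then conjugation by $A_0$ sends $X=B-1$ to $B^{-1}-1=\varphi(X)$; since $K[X]$ is generated by $X$, conjugation by $A_0$ maps $K[X]$ into itself and restricts there to the automorphism $\varphi$. Hence $A_0$ induces $\varphi$, and the uniqueness clause of Lemma~\ref{lem:Janusz} forces $A_0=A$.

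There is no genuine obstacle here: the argument is a direct transcription of Lemma~\ref{lem:2}(2) with the simpler target automorphism $1+X\mapsto(1+X)^{-1}$. The only points that need a little care are the characteristic-$2$ identity for $(1+X)^{-1}$ and the observation — needed in order to invoke the uniqueness half of Lemma~\ref{lem:Janusz} — that conjugation by any admissible $A_0$ genuinely restricts to $\varphi$ on all of $K[X]$, not merely that it agrees with $\varphi$ on $B$.
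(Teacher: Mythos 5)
Your proof is correct. Note, however, that the paper does not actually prove Lemma~\ref{lem:3} at all: it simply cites Janusz \cite{Ja}, page 154, where the statement is established. What you have written is a self-contained argument obtained by running the machinery of Lemma~\ref{lem:2} with the automorphism $\varphi(X)=X(1+X)^{-1}$, and it is sound: the characteristic-$2$ identities $(1+X)^{-1}=1+X+\cdots+X^{n-1}$ and $1+X(1+X)^{-1}=(1+X)^{-1}$ are right; $f(X)=(1+X)^{-1}$ is a unit with constant term $1$, so part (3) of the first lemma of the section makes $\varphi$ an automorphism and the remark in the proof of Lemma~\ref{lem:Janusz} makes the normalized $A$ unipotent; and your uniqueness step correctly supplies the point that is easy to elide, namely that any admissible $A_0$ with $A_0BA_0^{-1}=B^{-1}$ conjugates $X=B-1$ to $\varphi(X)$ and hence induces $\varphi$ on all of $K[X]$, so the uniqueness clause of Lemma~\ref{lem:Janusz} applies. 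Your route buys self-containedness and uniformity with the proof of Lemma~\ref{lem:2} (indeed, it is essentially the argument one finds in Janusz, transposed to the notation of this section), at the cost of a few lines; the paper's citation buys brevity but leaves the reader to unwind the translation between Janusz's setting and the present one.
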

\begin{proof}
 This is proved in \cite[page 154]{Ja}. 
\end{proof}

We will compute the order of matrix $A$ found in Lemma~\ref{lem:2}, Part (1). If $k>s$ then $(1+X)^{1+p^k}=(1+X)(1+X^{p^k})= 1+X$ and $A=I$ by the uniqueness of $A$. We consider the case $k\leq s$ in the following proposition.
\begin{prop}
\label{prop:order}
  Let the notation be as in Lemma~\ref{lem:2}, Part (1). Assume that $k\leq s$. Then the  order of $A$ is $p^{s+1-k}$.
\end{prop}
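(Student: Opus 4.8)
The plan is to translate the order computation into a purely number-theoretic statement about the multiplicative order of $1+p^k$ modulo $p^{s+1}$, using the correspondence between the matrix $A$ and a $K$-algebra automorphism of $K[X]$ supplied by Lemma~\ref{lem:Janusz}.

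First I would set $B=1+X$ and let $\varphi$ be the $K$-algebra automorphism of $K[X]$ constructed in the proof of Lemma~\ref{lem:2}(1), so that $\varphi(B)=B^{1+p^k}$ and $A$ induces $\varphi$ by conjugation, i.e. $AYA^{-1}=\varphi(Y)$ for every $Y\in K[X]$ (it suffices to check this for $Y=X$, since $K[X]$ is generated by $X$). For any $m\geq 1$ the matrix $A^m$ again lies in $\U_n(K)$; moreover the normalization hypothesis on $A$ says precisely that $A$ fixes the last vector of the standard basis of $K^n$, hence so does $A^m$, so $A^m$ also has only zeros in its last column apart from a $1$ in the $(n,n)$ position. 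Since $A$ induces $\varphi$, the matrix $A^m$ induces $\varphi^m$. Therefore, by the uniqueness part of Lemma~\ref{lem:Janusz}, $A^m=I$ if and only if $\varphi^m=\id$ (when $\varphi^m=\id$, both $A^m$ and $I$ are upper triangular matrices inducing $\id$ with the above normalization, so they coincide). Thus the order of $A$ equals the order of $\varphi$ in the group of $K$-algebra automorphisms of $K[X]$.

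Next, since $K[X]=K[B]$, one has $\varphi^m=\id$ if and only if $\varphi^m(B)=B$, that is $B^{(1+p^k)^m}=B$, that is $B^{(1+p^k)^m-1}=1$. So I need the multiplicative order of $B$ in the unit group of $K[X]$. Because $\Char K=p$ we have $B^{p^t}=(1+X)^{p^t}=1+X^{p^t}$, and since $X^{n-1}\neq 0=X^{n}$ with $n=p^s+1$ it follows that $X^{p^t}=0$ exactly when $t\geq s+1$; hence $B^{p^{s+1}}=1$ while $B^{p^s}=1+X^{p^s}\neq 1$, so $B$ has order $p^{s+1}$ and $B^{\ell}=1$ (for $\ell\in\Z$) if and only if $p^{s+1}\mid \ell$. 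Combining this with the previous paragraph, the order of $A$ equals the least integer $m\geq 1$ with $(1+p^k)^m\equiv 1\pmod{p^{s+1}}$.

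Finally I would determine this order by a lifting-the-exponent argument: since $p$ is odd and $v_p\big((1+p^k)-1\big)=k\geq 1$, an induction on $j$ gives $(1+p^k)^{p^{j}}=1+p^{k+j}u_j$ with $u_j$ a $p$-adic unit, and hence $v_p\big((1+p^k)^m-1\big)=k+v_p(m)$ for all $m\geq 1$. Consequently $(1+p^k)^m\equiv 1\pmod{p^{s+1}}$ if and only if $v_p(m)\geq s+1-k$, i.e. $p^{s+1-k}\mid m$ (note $s+1-k\geq 1$ since $k\leq s$), so the least such positive $m$, and therefore the order of $A$, is $p^{s+1-k}$. The only genuine computation here is the valuation identity $v_p\big((1+p^k)^m-1\big)=k+v_p(m)$, and this is also the single place where the hypothesis that $p$ is odd enters (it already fails for $p=2$, $k=1$), so I regard it as the main point; everything else is bookkeeping with Lemma~\ref{lem:Janusz} together with the elementary fact that $1+X$ has order $p^{s+1}$.
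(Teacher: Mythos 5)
Your argument is correct and is essentially the paper's own proof: both reduce the order of $A$ to the order of the automorphism $\varphi$ via the uniqueness clause of Lemma~\ref{lem:Janusz} (the paper applies it to the specific powers $\varphi^{p^{s+1-k}}$ and $A^{p^{s+1-k}}$, you state the equivalence $A^m=I\Leftrightarrow\varphi^m=\id$ for all $m$), and both then invoke that $1+X$ has order $p^{s+1}$ together with the valuation identity $v_p\bigl((1+p^k)^m-1\bigr)=k+v_p(m)$, which is exactly the content of the paper's unproved elementary lemma. Your explicit remark that this last step is where oddness of $p$ is used is a worthwhile clarification but does not change the substance.
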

 First we need the following elementary lemma.
\begin{lem} Let $a,b,l$ be three integers with $l\geq 1$.  Assume that 
\[a\equiv b \bmod p^l \text{ and } a\not\equiv b\bmod p^{l+1}.\]
 Then $a^p\equiv b^p\bmod p^{l+1}$ and $a^p\not\equiv b^p\bmod p^{l+2}$.
\qed
\end{lem}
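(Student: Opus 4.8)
The plan is to reduce everything to a single binomial expansion. First I would rewrite the hypothesis as $a=b+up^{l}$ with $u\in\Z$ and $p\nmid u$; this is exactly what ``$a\equiv b\bmod p^{l}$ but $a\not\equiv b\bmod p^{l+1}$'' means. Since $a\equiv b\bmod p$, we have $p\mid a$ if and only if $p\mid b$, and I will use $p\nmid b$ below --- the only case occurring in the applications, where in fact $b=1$.

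Next I would expand
\[
 a^{p}-b^{p}=(b+up^{l})^{p}-b^{p}=\sum_{j=1}^{p}\binom{p}{j}b^{\,p-j}u^{j}p^{jl},
\]
split off the $j=1$ term, which equals $p^{l+1}b^{\,p-1}u$ and has $p$-adic valuation exactly $l+1$ (because $p\nmid u$ and $p\nmid b$), and then show that every remaining summand is divisible by $p^{l+2}$. This is the crux: for $2\le j\le p-1$ one uses $p\mid\binom{p}{j}$, which gives valuation at least $1+jl\ge 1+2l\ge l+2$ since $l\ge1$; and for $j=p$ the summand $u^{p}p^{pl}$ has valuation at least $pl\ge l+2$ because $(p-1)l\ge2$. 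Hence $a^{p}-b^{p}\equiv p^{l+1}b^{\,p-1}u\bmod p^{l+2}$, and both conclusions are then immediate: the right-hand side is a multiple of $p^{l+1}$ but not of $p^{l+2}$.

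I do not anticipate a real obstacle here, the lemma being elementary, but two points need care. The oddness of $p$ is used precisely at the term $j=p$: for $p=2$ and $l=1$ one has $pl=2<l+2$, and indeed the second conclusion then fails (for instance $a=3$, $b=1$ give $a^{2}-b^{2}=8$, divisible by $2^{l+2}$). Also the coprimality $p\nmid b$ is genuinely needed for the non-divisibility statement (for instance $p=3$, $a=6$, $b=3$ give $a^{3}-b^{3}=3^{3}\cdot 7$); this is automatic in the intended use, where in the proof of Proposition~\ref{prop:order} the lemma is applied repeatedly with $b=1$ to the integers $(1+p^{k})^{p^{i}}$ in order to compute the multiplicative order of $1+p^{k}$ modulo $p^{s+1}$.
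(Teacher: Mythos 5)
Your argument is correct, and there is nothing in the paper to compare it against: the lemma is stated with its proof omitted (the end-of-proof mark is attached directly to the statement), so your binomial expansion is presumably exactly the computation the authors had in mind. Splitting $(b+up^{l})^{p}-b^{p}$ into the $j=1$ term $p^{l+1}b^{p-1}u$ of exact valuation $l+1$ and the remaining terms of valuation at least $l+2$ is the standard ``lifting the exponent'' argument, and your verification of the valuation bounds for $2\le j\le p-1$ and for $j=p$ is accurate. More importantly, your two caveats are genuine corrections to the statement as printed: the second conclusion fails without the hypothesis $p\nmid b$ (your example $p=3$, $a=6$, $b=3$, $l=1$, where $a^{3}-b^{3}=3^{3}\cdot7$), and it fails for $p=2$ when $l=1$ (your example $a=3$, $b=1$). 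Neither defect propagates into the paper: in the only place the lemma is used, the proof of Proposition~\ref{prop:order}, it is iterated starting from $b=1$, so $p\nmid b$ throughout; and in the one situation where that proposition is invoked with $p=2$ (Case~1 of Theorem~\ref{thm:p>2}), the standing hypothesis $-1\in F^{2}$ forces $\zeta_{4}\in F$, hence $k\ge2$ and the iteration starts at $l\ge2$. Still, the lemma itself ought to carry the hypothesis $p\nmid ab$ (and $l\ge 2$ when $p=2$), and your proposal supplies both the proof and the missing hypotheses.
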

\begin{proof}[Proof of Proposition~\ref{prop:order}]
Since $\varphi(1+X)= A(1+X)A^{-1}=(1+X)^{1+p^k}$, we obtain $\varphi^{p^l}(1+X)= A^{p^l}(1+X)A^{-p^l}=(1+X)^{(1+p^k)^{p^l}}$, for all $l\geq 0$. 

On the other hand, since $(1+p^k)\equiv 1\bmod p^k$ and $(1+p^k)\not \equiv 1\bmod p^{k+1}$, we obtain
 \[
 (1+p^k)^{p^{s+1-k}}\equiv 1 \bmod p^{s} \text{ and } (1+p^k)^{p^{s-k}}\not \equiv 1\bmod p^s. 
 \]
Thus $ (1+X)^{(1+p^k)^{p^{s+1-k}}}=1+X$ and  $ (1+X)^{(1+p^k)^{p^{s-k}}}\not=1+X$ since $1+X$ is  of order $p^{s+1}$. Hence
\[ A^{p^{s+1-k}}(1+X)A^{-p^{s+1-k}}=(1+X)^{(1+p^k)^{p^{s+1-k}}}=1+X, 
\]
and
\[ A^{p^{s-k}}(1+X)A^{-p^{s-k}}=(1+X)^{(1+p^k)^{p^{s-k}}}\not=1+X.
\]
Therefore $A^{p^{s+1-k}}=1$ (by the uniqueness statement applied to $\varphi^{p^{s+1-k}}$ and $A^{p^{s+1-k}}$), and  $A^{p^{s-k}}\not=1$. Hence the order of $A$ is $p^{s+1-k}$.
\end{proof}
Let $p$ be a prime. The extra-special group of order $p^3$ and exponent $p^2$ is the group 
\[
M_{p^3} = \langle x, y \mid  x^{p^2} = y^p = 1, yxy^{-1} = x^{1+p}\rangle.
\]
\begin{cor}
\label{cor:embedding} Let $p$ be an odd  prime number and $K$ a field of characteristic $p$. 
 Let $M$ be the cyclic group $\Z/p^2\Z$ or $M_{p^3}$. The smallest number $n$ such that $M$ can be embedded in $\U_n(K)$ is $n=p+1$.
\end{cor}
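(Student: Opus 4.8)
The plan is to prove the two inequalities separately: that $M$ embeds into $\U_{p+1}(K)$, and that $M$ embeds into no $\U_m(K)$ with $m\le p$.

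\textbf{Upper bound.} I would specialize the constructions of Section~3 to $s=1$, so that $n=1+p$ and $X$ is the $(p+1)\times(p+1)$ nilpotent matrix with $X^{p}\neq 0=X^{p+1}$. Put $B=1+X\in\U_{p+1}(K)$; as recorded just after Lemma~\ref{lem:2}, $B$ has order $p^{s+1}=p^{2}$, so $x\mapsto B$ already gives an embedding $\Z/p^{2}\Z\hookrightarrow\U_{p+1}(K)$ (no further input needed for the cyclic case). For $M_{p^3}$ I would apply Lemma~\ref{lem:2}, Part~(1), with $k=1\le s=1$, to obtain a unipotent $A\in\U_{p+1}(K)$ with $ABA^{-1}=B^{1+p}$, and then Proposition~\ref{prop:order} to conclude that $A$ has order $p^{s+1-k}=p$. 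Thus $A$ and $B$ satisfy $A^{p}=B^{p^{2}}=1$ and $ABA^{-1}=B^{1+p}$, which are exactly the defining relations of $M_{p^3}$, so $x\mapsto B,\ y\mapsto A$ extends to a homomorphism $\theta\colon M_{p^3}\to\U_{p+1}(K)$.

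\textbf{Injectivity of $\theta$.} This is the one point that needs a small argument. I would note that $\langle B\rangle$ has order $p^{2}$ and that $A\notin\langle B\rangle$, since $A$ does not commute with $B$: indeed $ABA^{-1}=B^{1+p}\neq B$ because $B$ has order $p^{2}>p$. Hence $\im\theta$ is strictly larger than the cyclic subgroup $\langle B\rangle$ of order $p^{2}$; being a quotient of $M_{p^3}$, a group of order $p^{3}$, it must have order exactly $p^{3}$, so $\theta$ is injective. This gives $M\hookrightarrow\U_{p+1}(K)$ in both cases.

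\textbf{Lower bound.} It remains to rule out $M\hookrightarrow\U_m(K)$ for $m\le p$. For any $g=1+N\in\U_m(K)$ with $N$ strictly upper triangular one has $N^{m}=0$, hence $N^{p}=0$ since $p\ge m$; because $\Char K=p$ the intermediate binomial coefficients vanish in $K$, so $g^{p}=(1+N)^{p}=1+N^{p}=1$. Therefore $\U_m(K)$ has exponent dividing $p$ whenever $m\le p$. Since both $\Z/p^{2}\Z$ and $M_{p^3}$ contain an element of order $p^{2}$ (the generator $x$), neither can be a subgroup of such a $\U_m(K)$. Combining the two halves yields the minimal value $n=p+1$. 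I expect the injectivity check in the upper bound to be the only non-automatic step; everything else is a direct appeal to Lemma~\ref{lem:2}, Proposition~\ref{prop:order}, and the exponent computation.
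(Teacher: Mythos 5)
Your proof is correct and follows essentially the same route as the paper: the exponent-$p$ observation for $\U_m(K)$, $m\le p$, gives the lower bound, and the matrices $B=1+X$ and $A$ from Lemma~\ref{lem:2} with Proposition~\ref{prop:order} give the embeddings. The only difference is that you spell out the injectivity of the map on $M_{p^3}$ (via $A\notin\langle B\rangle$ and Lagrange), which the paper merely asserts; that is a welcome addition but not a change of method.
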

\begin{proof}
 Note that every element in $\U_p(K)$ is of exponent $p$. Hence $M$ cannot be embedded in $\U_r(K)$ with $r\leq p$. 

If $M=\Z/p^2\Z$, we define an embedding of $M$ into  $\U_{p+1}(K)$ by sending $1$ to the matrix $B=1+X$.

If $M=M_{p^3}$ then we define a group homomorphism $\varphi\colon M \to \U_{p+1}(K)$ by sending $x$ to  matrix $B$ and $y$ to matrix $A$, where $A$ is a matrix  in $\U_{p+1}(K)$ such  that $ABA^{-1}=B^{1+p}$. The existence and uniqueness of $A$ is assured by Lemma~\ref{lem:2}. Furthermore the (group) order of $A$ is $p$. Then $\varphi$ is well-defined and $\varphi$ is an injection. 
\end{proof}
%%%%%%%%%%%%%%%%%%%%%%%%%%%%%%%%%%%%%%%%%%%%%%%%%%%%%%%%%%%
%%%%%%%%%%%%%%%%%%%%%%%%%%%%%%%%%%%%%%%%%%%%%%%%%%%%%%%%%%%
%%%%%%%%%%%%%%%%%%%%%%%%%%%%%%%%%%%%%%%%%%%%%%%%%%%%%%%%%%%
\section{Galois groups of maximal $p$-extensions of rigid fields}
Let $p$ be an odd prime. Let $F$ be a field containing a (fixed) primitive $p$-th root of unity. For each $a\in F^\times$,  let $\chi_a\in H^1(G_F,\F_p)= H^1(G_F(p),\F_p)$ be the  character associating to $a$ via the Kummer map $F^\times\to H^1(G_F,\F_p)=H^1(G_F(p),\F_p)$. 
\begin{defn}
\label{defn:p-rigid}
 An element $a\in F\setminus F^p$ is called {\it $p$-rigid} if $\chi_a\cup \chi_b=0$ implies $b\in a^i F^p$ for some $i\geq 0$. The field $F$ is called {\it $p$-rigid} if every element in $F\setminus F^p$ is $p$-rigid.
\end{defn}

For $h > 0$, let $\mu_{p^h}$  be the group of $p^h$ roots of unity. We also set $\mu_{p^\infty}$ to be the group of  all roots of unity of order $p^m$ for some $m \ge 0$. Finally we set $k \in \mathbb{N} \cup  \{ \infty \}$ to be the maximum of all $h \in \mathbb{N} \cup \{ \infty \}$ such that
$\mu_{p^h} \subseteq F$. Then we have the following theorem, see \cite[Theorem 4.10]{CMQ} and also \cite[Theorem 2]{Wa3} for part (2). % which describe the structure of the maximal pro-$p$-group of a $p$-rigid field.

\begin{thm}
\label{thm:CMQ}
Suppose $F$ is a $p$-rigid field and let $G=G_F(p)$. Then we have the following.
\begin{enumerate}
\item \[ G/G^{(n)}  =\begin{cases}
\left( \prod_{\mathcal{I}}  \mathbb{Z}/p^{n-1}\Z \right) \rtimes \mathbb{Z}/p^{n-1}\Z  \ \ &\text{if } k < \infty,\\
\prod_{\mathcal{I}}  \mathbb{Z}/p^{n-1}\Z \ \ &\text{if } k = \infty.
\end{cases}
\]
\item 
\[ G =\begin{cases}
\left( \prod_{\mathcal{I}}  \mathbb{Z}_p \right) \rtimes \mathbb{Z}_p  \ \ &\text{if } k < \infty,\\
\prod_{\mathcal{I}}  \mathbb{Z}_p \ \ &\text{if } k = \infty.
\end{cases}
\]
\end{enumerate}
Moreover when $k < \infty$ there exists a generator $\sigma$ of the outer factor $\mathbb{Z}/p^{n-1}\Z$ in $(1)$ and of the outer factor $\mathbb{Z}_p$ in $(2)$ such that for each  $\tau$ from the inner factor $\prod_{\mathcal{I}}  \mathbb{Z}/p^{n-1}\Z$ in $(1)$ and  each $\tau$ from the inner factor $\prod_{\mathcal{I}} \mathbb{Z}_p$ in $(2)$ we have
\[ \sigma \tau \sigma^{-1} = \tau^{p^k+1}.\]
\end{thm}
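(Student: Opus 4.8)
The plan is to treat this as a known structure theorem from the theory of $p$-rigid fields, going back to Ware and Engler--Koenigsmann and obtained in this precise form by Chebolu--Min\'a\v{c}--Quadrelli \cite{CMQ} and Wadsworth \cite{Wa3}. I would first establish (1) for every $n$, then deduce (2) by passing to the inverse limit $G=\varprojlim_n G/G^{(n)}$, noting that $\bigcap_n G^{(n)}=1$ is immediate once the explicit form of the quotients in (1) is in hand.

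For (1), set $V=F^\times/F^p$, so that Kummer theory identifies $H^1(G,\F_p)$ with the $\F_p$-dual of $V$ via $a\mapsto\chi_a$, where $G=G_F(p)$. Since $H^\bullet(G,\F_p)$ is generated in degree $1$ (norm residue isomorphism theorem), $G$ admits a minimal presentation whose relations are governed by $H^2(G,\F_p)$; the rigidity hypothesis, namely that $\chi_a\cup\chi_b=0$ forces $\chi_a$ and $\chi_b$ to be $\F_p$-linearly dependent, pins down $H^2(G,\F_p)$ and shows that, after choosing a basis $\{\chi_{a_i}\}_{i\in\mathcal I}\cup\{\chi_c\}$ of $H^1(G,\F_p)$ adapted to the cyclotomic character, the only relations among the corresponding topological generators $\{\tau_i\}_{i\in\mathcal I}\cup\{\sigma\}$ of $G$ are $[\tau_i,\tau_j]=1$ and $\sigma\tau_i\sigma^{-1}=\tau_i^{1+p^k}$ (the last being trivial when $k=\infty$, in which case $\chi_c$ is absent). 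An induction on $n$ then shows that modulo $G^{(n)}$ these relations produce exactly the groups displayed in (1): the closed subgroup generated by the $\tau_i$ surjects onto $\prod_{\mathcal I}\Z/p^{n-1}\Z$ with no further collapsing, and, when $k<\infty$, $\sigma$ generates a complementary copy of $\Z/p^{n-1}\Z$ acting as stated.

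The conjugation relation I would read off from the cyclotomic action: $G$ acts on $\mu_{p^\infty}$ through $\chi_{\mathrm{cyc}}\colon G\to\Z_p^\times$, whose image is exactly $1+p^k\Z_p$ precisely because $\mu_{p^k}\subseteq F$ but $\mu_{p^{k+1}}\not\subseteq F$. Choosing $\sigma$ with $\chi_{\mathrm{cyc}}(\sigma)=1+p^k$ and the $\tau_i$ inside $\ker\chi_{\mathrm{cyc}}$, the standard computation $\sigma\bigl(\tau_i(\sqrt[p^m]{a_i})\bigr)=\tau_i^{1+p^k}\bigl(\sigma(\sqrt[p^m]{a_i})\bigr)$ in each finite layer gives $\sigma\tau\sigma^{-1}=\tau^{1+p^k}$ for every $\tau$ from the inner factor, and (2) is then the inverse limit of (1). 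The step I expect to be the main obstacle is the induction showing that rigidity permits \emph{no} relations among the $\tau_i$ beyond commutativity: this is where the full force of the rigidity hypothesis is needed, and in the literature it is often handled instead via the canonical $p$-henselian valuation attached to a $p$-rigid field, whose value group modulo $p$ and residue field account directly for the factor $\prod_{\mathcal I}\Z_p$ and the outer $\Z_p$.
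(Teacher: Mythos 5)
First, a point of comparison: the paper does not prove this theorem at all. It is imported from the literature, with the reader sent to \cite[Theorem 4.10]{CMQ} and, for part (2), to \cite[Theorem 2]{Wa3}. So there is no internal proof to measure your proposal against; your high-level architecture (establish (1) for each $n$, obtain (2) as $G=\varprojlim_n G/G^{(n)}$, read the conjugation action off the cyclotomic character) does match how the result is organized in those sources.

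As a proof, however, your sketch has a genuine gap exactly where you suspect it does, and it is worth making it precise. The inference ``rigidity pins down $H^2(G,\F_p)$, hence the only relations among the $\tau_i$ and $\sigma$ are $[\tau_i,\tau_j]=1$ and $\sigma\tau_i\sigma^{-1}=\tau_i^{1+p^k}$'' is not valid as stated: a pro-$p$ group is not determined by $H^1$, $H^2$ and the cup product. From a minimal presentation one only reads off the number of relations and their leading terms modulo the third step of the descending central (or Zassenhaus) filtration, so knowing that each relation begins with a commutator does not exclude higher-order corrections and does not yield the asserted presentation. Similarly, the computation $\sigma\bigl(\tau_i(\sqrt[p^m]{a_i})\bigr)=\tau_i^{1+p^k}\bigl(\sigma(\sqrt[p^m]{a_i})\bigr)$ presupposes that $F(p)$ is exhausted by the radical extensions $F(\mu_{p^\infty},\sqrt[p^m]{a})$, which is itself a nontrivial consequence of rigidity rather than an input. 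The proofs in the cited sources close both gaps via the canonical $p$-henselian valuation attached to a $p$-rigid field (see \cite{EK}, \cite{Wa3}, \cite{CMQ}), whose value group and residue field account directly for the inner factor $\prod_{\mathcal I}\Z_p$ and the outer $\Z_p$ together with the $1+p^k$ action; you name this mechanism but do not carry it out. Given that the paper itself treats the statement as a black box, presenting a road map is defensible, but the decisive step remains outside what you have written.
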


\begin{cor}
\label{cor:Zassenhaus quotient}
 Let $F$ be a $p$-rigid field and $G=G_F(p)$. Let $n\geq 2$ and let $s$ be the integer such that $p^{s-1}<n\leq p^{s}$. Then $G_{(n)}=G^{p^{s}}$ and 
 \[ G/G_{(n)} =\begin{cases}
\left( \prod_{\mathcal{I}}  \mathbb{Z}/p^{s}\mathbb{Z}\right) \rtimes \mathbb{Z}/p^{s}\mathbb{Z}  \ \ &\text{if } k < \infty,\\
\prod_{\mathcal{I}}  \mathbb{Z}/p^{s}\mathbb{Z} \ \ &\text{if } k = \infty.
\end{cases}
\]
Moreover when $k < \infty$ there exists a generator $\sigma$ of the outer factor $\mathbb{Z}/p^{s}\mathbb{Z}$  such that for each $\tau$ from the inner factor $\prod_{\mathcal{I}}\mathbb{Z}/p^{s+1}\mathbb{Z}$ we have
\[ \sigma \tau \sigma^{-1} = \tau^{p^k+1}.\]
\end{cor}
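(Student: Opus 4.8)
The plan is to deduce everything from Theorem~\ref{thm:CMQ} together with the already-established facts about the $p$-Zassenhaus filtration of $\U_m(\F_p)$-type quotients. The first step is to identify $G_{(n)}$ explicitly. Since $p^{s-1}<n\le p^s$, I would compute $G_{(n)}$ using the recursion $G_{(n)}=G_{(\lceil n/p\rceil)}^p\prod_{i+j=n}[G_{(i)},G_{(j)}]$. For the groups $G$ appearing in Theorem~\ref{thm:CMQ}(2), which are either $\prod_{\mathcal I}\Z_p$ or a semidirect product of such with $\Z_p$, the commutator subgroups are easy to control: in the abelian case $[G_{(i)},G_{(j)}]=1$ always, and in the semidirect case $\sigma\tau\sigma^{-1}=\tau^{p^k+1}$ forces $[\sigma,\tau]=\tau^{p^k}$, so commutators only contribute $p^k$-th powers of the inner factor. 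An induction on $n$ (using $\lceil n/p\rceil\le p^{s-1}$ when $n\le p^s$, and $\lceil n/p\rceil\le p^{s-2}$ fails only at the boundary so one must be slightly careful at $n=p^{s-1}+1,\dots,p^s$) should then yield $G_{(n)}=G^{p^s}$: the $p$-power part $G_{(\lceil n/p\rceil)}^p$ gives $G^{p^{s-1}\cdot p}=G^{p^s}$ by the inductive description $G_{(\lceil n/p\rceil)}=G^{p^{s-1}}$, and the commutator part is absorbed since $p^k$-th powers of inner elements already lie in $G^{p^s}$ once $s$ is large enough relative to the relation — more precisely one checks $[G_{(i)},G_{(j)}]\subseteq G^{p^s}$ for all $i+j=n$.

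The second step is to compute the quotient $G/G^{p^s}$. Here I would simply apply Theorem~\ref{thm:CMQ}(2): quotienting $\prod_{\mathcal I}\Z_p$ (resp. $(\prod_{\mathcal I}\Z_p)\rtimes\Z_p$) by the subgroup of $p^s$-th powers gives $\prod_{\mathcal I}\Z/p^s\Z$ (resp. $(\prod_{\mathcal I}\Z/p^s\Z)\rtimes\Z/p^s\Z$), because taking $p^s$-th powers is a ``coordinatewise'' operation that is compatible with the semidirect product structure — one must verify that $G^{p^s}$, as an abstract subgroup, really is the product of the $p^s$-th powers in each factor, which follows once one knows the relation $\sigma\tau\sigma^{-1}=\tau^{p^k+1}$ is respected, i.e. that $\sigma^{p^s}$ and $\tau^{p^s}$ generate $G^{p^s}$. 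The surviving conjugation relation $\sigma\tau\sigma^{-1}=\tau^{p^k+1}$ on the quotient is inherited verbatim from Theorem~\ref{thm:CMQ}, giving the last displayed formula (the exponent should read $\Z/p^s\Z$ rather than $\Z/p^{s+1}\Z$ on the inner factor — I would match indices to the statement of Theorem~\ref{thm:CMQ} and correct accordingly).

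The main obstacle, and the only genuinely non-formal point, is the bookkeeping in Step 1 showing that the commutator terms $\prod_{i+j=n}[G_{(i)},G_{(j)}]$ contribute nothing beyond $G^{p^s}$, and symmetrically that $G^{p^s}\subseteq G_{(n)}$ (the reverse inclusion, needed for equality). For the reverse inclusion one wants $G^{p^s}=G_{(p^s)}\subseteq G_{(n)}$ when $n\le p^s$, which is just monotonicity of the filtration once $G_{(p^s)}=G^{p^s}$ is known; and $G_{(n)}\subseteq G^{p^{s-1}\cdot p}=G^{p^s}$ requires that the very first $p$-power term already lands low enough, which is where $p^{s-1}<n$ is used (so that $\lceil n/p\rceil>p^{s-2}$, forcing $G_{(\lceil n/p\rceil)}\subseteq G^{p^{s-1}}$ is sharp). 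I would organize this as a clean two-sided induction, invoking the explicit description of $G$ from Theorem~\ref{thm:CMQ}(2) throughout so that all subgroups are literally described by exponents, turning the whole argument into elementary arithmetic of $p$-adic valuations. Everything else — the structure of the quotient and the conjugation formula — is then immediate.
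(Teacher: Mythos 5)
Your proposal is essentially correct, but it takes a more self-contained route than the paper. The paper's own proof is two lines: the identity $G_{(n)}=G^{p^s}$ is simply cited from the end of \cite{CMQ}, and the description of $G/G_{(n)}$ then follows by combining part (1) of Theorem~\ref{thm:CMQ} (applied with $s+1$ in place of $n$, so that the quotient $G/G^{(s+1)}$ is already given in exactly the desired form) with the fact $G^{(s+1)}=G^{p^s}$ from \cite[Remark 4.2]{CMQ}. You instead reprove the key identity by running the Zassenhaus recursion directly on the explicit group $(\prod_{\mathcal I}\Z_p)\rtimes\Z_p$ from part (2), and then compute $G/G^{p^s}$ by hand. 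This buys independence from the external reference, at the cost of the commutator estimate you correctly flag as the only non-formal point: you need $[G_{(i)},G_{(j)}]\subseteq G^{p^s}$ for $i+j=n$, which amounts to checking that $[\sigma^{p^{s_i}},\tau^{p^{s_j}}]=\tau^{p^{s_j}((p^k+1)^{p^{s_i}}-1)}$ has exponent of valuation $s_i+s_j+k\geq s$, using $i\leq p^{s_i}$, $j\leq p^{s_j}$, $i+j=n>p^{s-1}$ and $k\geq1$; your proposal names this obstacle but does not carry it out, whereas the paper sidesteps it entirely by citation. Your observation that the inner factor in the final displayed relation should read $\Z/p^s\Z$ rather than $\Z/p^{s+1}\Z$ is a genuine (and correct) typo catch; the paper's route via $G/G^{(s+1)}$ makes the correct exponent $p^s$ automatic.
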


\begin{proof}
 The first statement $G_{(n)}=G^{p^s}$ is proved at the end of the paper \cite{CMQ}. The second statement follows from Theorem~\ref{thm:CMQ} and $G^{(s+1)}=G^{p^s}$ 
\cite[Remark 4.2]{CMQ}.
\end{proof}
%%%%%%%%%%%%%%%%%%%%%%%%%%%%%%%%%%%%%%%%%%%%%%%%%%%%%%%%%%%%%%%%%
%%%%%%%%%%%%%%%%%%%%%%%%%%%%%%%%%%%%%%%%%%%%%%%%%%%%%%%%%%%%%
%%%%%%%%%%%%%%%%%%%%%%%%%%%%%%%%%%%%%%%%%%%%%%%%%%%%%%%%%%%
\section{Kernel Unipotent Conjecture over odd rigid fields}

\begin{prop}
\label{prop:main} Let $k\geq 1$ be an integer. 
 Let $H$ be a pro-$p$-group. Assume that $H$ satisfies one of the following conditions.
\begin{enumerate}
\item \[ H\simeq (\prod_I \Z/p^{s+1}\Z). \]
 \item \[ H\simeq (\prod_I \Z/p^{s+1}\Z) \rtimes(\Z/p^{s+1}\Z)=:U\rtimes V, \]
  and there exists a generator $\sigma\in V$ for $V$ such that  $\sigma\tau\sigma^{-1}=\tau^{p^k+1}$, for all $\tau\in U$.%, and for some fixed $k\geq 1$.
\item If $p=2$ and 
\[ H\simeq (\prod_I \Z/p^{s+1}\Z) \rtimes(\Z/p^{s+1}\Z)=:U\rtimes V, \]
  and there exists a generator $\sigma\in V$ for $V$ such that  $\sigma\tau\sigma^{-1}=\tau^{-(2^k+1)}$, for all $\tau\in U$.%, and for some fixed $k\geq 1$.
\item If $p=2$ and 
\[ H\simeq (\prod_I \Z/p^{s+1}\Z) \rtimes(\Z/p^{s+1}\Z)=:U\rtimes V, \]
  and there exists a generator $\sigma\in V$ for $V$ such that  $\sigma\tau\sigma^{-1}=\tau^{-1}$, for all $\tau\in U$.%, and for some fixed $k\geq 1$.
\end{enumerate}
Let $n=p^s+1$. Then for every $u\in H$, $u\not=1$, there exists a representation $H\to \U_n(\F_p)$ such that $\rho(u)\not=1$. 
\end{prop}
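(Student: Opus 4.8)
The plan is to detect each nontrivial $u$ by one of two explicit unipotent representations, built from the matrices of Section 3, according to whether $u$ lies in the normal factor $U$. Throughout set $B=1+X\in\U_n(\F_p)$, where $n=p^s+1$ and $X$ is the nilpotent matrix of Section 3, so that $X^{n-1}\neq 0=X^n$. In the commutative ring $\F_p[X]$ one has $(1+X)^{p^j}=1+X^{p^j}$, hence $B^{p^s}=1+X^{p^s}=1+X^{n-1}\neq 1$, while $B^{p^{s+1}}=1+X^{p^{s+1}}=1$ because $p^{s+1}\geq p^s+1=n$; thus $B$ has order exactly $p^{s+1}$. I will also use the elementary fact that every matrix in $\U_n(\F_p)$ is killed by $p^{s+1}$: if $N=1+M$ with $M^{n}=0$ then $N^{p^{s+1}}=1+M^{p^{s+1}}=1$, again because $p^{s+1}\geq n$. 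In particular any matrix $A\in\U_n(\F_p)$ produced below automatically satisfies $A^{p^{s+1}}=1$.

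\emph{Step 1 (case (1), and elements with nontrivial $V$-component in (2)--(4)).} If $H\simeq\prod_I\Z/p^{s+1}\Z$ and $u=(u_i)\neq 1$, choose $i_0$ with $u_{i_0}\neq 0$ and let $\rho$ be the continuous projection $H\surj\Z/p^{s+1}\Z$ onto the $i_0$-th factor followed by $\Z/p^{s+1}\Z\to\U_n(\F_p)$, $1\mapsto B$ (well defined since $B^{p^{s+1}}=1$); then $\rho(u)=B^{u_{i_0}}\neq 1$. In cases (2)--(4) write $u=\tau\sigma^{\,j}$ with $\tau\in U$ and $0\le j<p^{s+1}$; if $j\neq 0$, let $\rho$ be $H\surj H/U\cong V\cong\Z/p^{s+1}\Z\to\U_n(\F_p)$, $\sigma\mapsto B$, so that $\rho(u)=B^{\,j}\neq 1$. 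This settles case (1) entirely and every $u\notin U$ in cases (2)--(4).

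\emph{Step 2 (elements of $U$ in cases (2)--(4)).} Now let $u=\tau\in U\setminus\{1\}$. Pick $i_0$ with $\tau_{i_0}\neq 0$ and collapse all coordinates of $U$ except the $i_0$-th; since $V$ acts diagonally on $U$ this gives a quotient $H\surj H_{i_0}$ with $H_{i_0}\cong(\Z/p^{s+1}\Z)\rtimes(\Z/p^{s+1}\Z)$, a finite group with presentation $\langle\bar\tau,\bar\sigma\mid\bar\tau^{p^{s+1}}=\bar\sigma^{p^{s+1}}=1,\ \bar\sigma\bar\tau\bar\sigma^{-1}=\bar\tau^{\,c}\rangle$, where $c=1+p^k$ in case (2), $c=-(1+2^k)$ in case (3), and $c=-1$ in case (4). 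By part~(1) of Lemma~\ref{lem:2} (case (2)), part~(2) of Lemma~\ref{lem:2} (case (3)), and Lemma~\ref{lem:3} (case (4)), there is a matrix $A\in\U_n(\F_p)$ with $ABA^{-1}=B^{\,c}$. Because $B^{p^{s+1}}=1$, $A^{p^{s+1}}=1$, and $ABA^{-1}=B^{\,c}$, the assignment $\bar\tau\mapsto B$, $\bar\sigma\mapsto A$ respects the three relations and so defines a homomorphism $\rho_0\colon H_{i_0}\to\U_n(\F_p)$. Precomposing with $H\surj H_{i_0}$ gives a representation $\rho$ of $H$ with $\rho(u)=\rho_0(\bar\tau^{\,\tau_{i_0}})=B^{\tau_{i_0}}\neq 1$, since $0<\tau_{i_0}<p^{s+1}$ and $B$ has order $p^{s+1}$. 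Together with Step 1 this exhausts all $u\neq 1$.

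\emph{The crux.} There is no hard estimate here: the proof is an assembly of the conjugation identities $ABA^{-1}=B^{\,c}$ from Section 3 with the numerical fact that $\U_{p^s+1}(\F_p)$ has exponent exactly $p^{s+1}$, which makes the order conditions $\bar\tau^{p^{s+1}}\mapsto 1$ and $\bar\sigma^{p^{s+1}}\mapsto 1$ automatic. The one subtlety worth flagging is that a single homomorphism $\bar\tau\mapsto B$, $\bar\sigma\mapsto A$ is in general \emph{not} injective on $H_{i_0}$ (for instance, in case (2), Proposition~\ref{prop:order} shows that $\bar\sigma^{p^{s+1-k}}$ lies in its kernel), which is precisely why Step 1 is needed to capture elements with nontrivial $V$-component; and one must invoke the appropriate one of Lemmas~\ref{lem:2}--\ref{lem:3} in each twisted case so that $A$ exists.
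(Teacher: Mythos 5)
Your proof is correct and follows essentially the same route as the paper: one representation ($U\mapsto 1$, $\sigma\mapsto B$) detects elements with nontrivial $V$-component, and for $u\in U\setminus\{1\}$ one kills all coordinates except $i_0$ and sends $\bar\tau\mapsto B$, $\bar\sigma\mapsto A$ with $A$ supplied by Lemmas~\ref{lem:2}--\ref{lem:3}; this is exactly the paper's Case 1/Case 2 split. The only (harmless) cosmetic differences are that you obtain $A^{p^{s+1}}=1$ from the fact that $\U_{p^s+1}(\F_p)$ has exponent dividing $p^{s+1}$, where the paper cites Proposition~\ref{prop:order}, and that you phrase the coordinate collapse as an explicit quotient $H\surj H_{i_0}$.
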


\begin{proof} Let $B:=1+X$ be as in Lemmas~\ref{lem:2} and \ref{lem:3}.
 
(1) We write $u=(u_i)_I\in U$ and let $C_i$ be a copy of $\Z/p^{s+1}\Z$ at the $i$-th coordinate in $U$. Then there exists $i_0$ such that $u_{i_0}$ is not the indentity element in $C_{i_0}$. Let $\tau$ be a generator of $C_{i_0}$ and let us write $u_{i_0}=\tau^a$ with $p^{s+1}\nmid a$. We define a representation $\rho\colon H\to \U_n(\F_p)$ by: $\rho(\tau)\mapsto B$, $\rho(C_i)=1$ for all $i\not=i_0$. Since $B^{p^{s+1}}=1$, $\rho$ is a well-defined homomorphism. Moreover $\rho(u)=\rho(u_{i_0})=B^a\not=1$.  
\\
\\
(2)-(4):    For convenience, for any element $g$ in a group $G$, we write $\varphi(g)=g^{1+p^k}$,  $g^{-(1+2^k)}$, or $g^{-1}$ if we are in Part (2), Part (3), or Part (4) respectively. 

In our construction of suitable homomorphisms $\rho \colon U\rtimes V\to \U_4(\F_p)$ below, we use the fact that the semidirect presentations (2), (3) and (4) allow us to pick up generators $\tau_i$, $i\in I,$ of $U$ and a generator $\sigma$ of $V$ such that  defining relations for $U\rtimes V$, besides orders of generators and trivial commutators $[\tau_i,\tau_j]$, are just relations $\sigma\tau_i\sigma^{-1}=\varphi(\tau_i)$, $i\in I$.
%we write $\varphi(B)=B^{1+p^k}$,  $B^{-(1+2^k)}$, or $B^{-1}$ if we are in Part (2), Part (3), or Part (4) respectively.
\\
\\
We write $x=uv$, with $u\in U$, $v\in V$.
\\  
\\
 {\bf Case 1:} $v\not =1$.  We define a group homomorphism $\rho: H\to \U_n(\F_p)$ as follow: $\rho(\tau)=1$ for all $\tau\in U$ and $\rho(\sigma)=B$. This is a well-defined homomorphism since it is clear that $B1B^{-1}=1=\varphi(1)$ and that $B^{p^{s+1}}=1$. Writing $v=\sigma^a$ for some $a\in\Z$ such that $p^{s+1}\nmid a$ ($v\not=1$ in $V$). Since $B$ is of order $p^{s+1}$, $\rho(x)=\rho(v)=B^a\not=1$, as desired.
\\
\\
{\bf Case 2:} $v=1$. Then $u\not =1$. 

By Lemmas~\ref{lem:2}  and~\ref{lem:3}, there exists $A\in \U_n(\F_p)$ such that $ABA^{-1}=\varphi(B)$. We can define a representation $\rho:U\rtimes V\to \U_n(\F_p)$ such that $\rho(x)=\rho(u)\not=1$ as follows.
We write $u=(u_i)_I\in U$ and let $C_i$ be a copy of $\Z/p^{n+1}\Z$ at the $i$-th coordinate in $U$. Then there exists $i_0$ such that $u_{i_0}$ is not the indentity element in $C_{i_0}$. Let $\tau_0$ be a generator of $C_{i_0}$, and let us write $u_{i_0}=\tau_0^a$ with $p^{s+1}\nmid a$. We define $\rho$ by: $\rho(\tau_0)\mapsto B$, $\rho(C_i)=1$ for all $i\not=i_0$ and $\rho(\sigma)=A$. Since $B^{p^{s+1}}=1$, we see that $\rho(\tau^{p^{s+1}})=1$ for all $\tau\in U$. We also have $\rho(\sigma^{p^{s+1}})=A^{p^{s+1}}=1$ by Proposition~\ref{prop:order}. Since $ABA^{-1}=\varphi(B)$ and hence $A B^rA^{-1}=\varphi(B)^r=\varphi(B^r)$ for all $r$, we see that $\rho(\sigma \tau\sigma^{-1})=\varphi(\rho(\tau))=\rho(\varphi(\tau))$ for all $\tau\in U$. Therefore,  we conclude that $\rho$ is a well-defined homomorphism. Moreover $\rho(u)=\rho(u_{i_0})=B^a\not=1$. 
\end{proof}
\begin{thm}
\label{thm:main}
 Let $p$ be a prime number and let $F$ be a rigid field. Let $G=G_F(p)$ be the Galois group of the maximal $p$-extension of $F$. Then for any natural number $n$,
\[
G_{(n)}=\bigcap_{\rho} \ker(\rho:G\to \U_n(\F_p)),
\]
where $\rho$ runs over the set of all continuous homomorphisms $G\to \U_n(\F_p)$.
\end{thm}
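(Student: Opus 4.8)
The plan is to establish the two inclusions separately. The inclusion $G_{(n)}\subseteq\bigcap_{\rho}\ker(\rho\colon G\to\U_n(\F_p))$ is immediate from Lemma~\ref{lem:3a}(b), which says that every continuous homomorphism $G\to\U_n(\F_p)$ already kills $G_{(n)}$, and needs nothing about $F$. So the content is the reverse inclusion: given $u\in G$ with $u\notin G_{(n)}$, one must produce a continuous $\rho\colon G\to\U_n(\F_p)$ with $\rho(u)\neq1$. I would work in the quotient $\bar G:=G/G_{(n)}$, in which the image $\bar u$ of $u$ is nontrivial, and exploit the explicit description of $\bar G$ available for rigid fields.

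The case $n=1$ is trivial ($\U_1(\F_p)=1=G/G_{(1)}$), so assume $n\geq2$ and let $s\geq1$ be the integer with $p^{s-1}<n\leq p^{s}$. By Corollary~\ref{cor:Zassenhaus quotient} (and, when $p=2$, the analogous structure theorem, where the relation may instead be $\sigma\tau\sigma^{-1}=\tau^{-(2^{k}+1)}$ or $\tau^{-1}$), $\bar G$ is isomorphic to one of
\[
\prod_I\Z/p^{s}\Z\qquad\text{or}\qquad\Big(\prod_I\Z/p^{s}\Z\Big)\rtimes\Z/p^{s}\Z ,
\]
where in the second case a generator $\sigma$ of the outer factor satisfies $\sigma\tau\sigma^{-1}=\tau^{p^{k}+1}$ (resp.\ $\tau^{-(2^{k}+1)}$, $\tau^{-1}$) for every $\tau$ in the inner factor, and $k\geq1$ since $F$ contains a primitive $p$-th root of unity. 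Now put $s':=s-1\geq0$ and $m:=p^{s'}+1=p^{s-1}+1$; then $m\leq n$ because $p^{s-1}<n$. The key observation is that $\bar G$, whose exponent is $p^{s}=p^{s'+1}$, is exactly a group of the form occurring in Proposition~\ref{prop:main} for the parameter value $s'$: comparing the two lists, $\bar G$ falls under case (1), (2), (3) or (4) there with $s$ replaced by $s'$ and with the same $k$ (and when $s'=0$ the outer action is trivial modulo $G^{p}$, since $p^{k}+1\equiv1\pmod p$, so one is in case (1)). Hence Proposition~\ref{prop:main} yields a continuous $\bar\rho\colon\bar G\to\U_m(\F_p)$ with $\bar\rho(\bar u)\neq1$. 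Composing with the quotient map $G\twoheadrightarrow\bar G$ and with the block embedding $\U_m(\F_p)\hookrightarrow\U_n(\F_p)$ (valid since $m\leq n$, as already used in the proof of Theorem~\ref{thm:intersection}) produces $\rho\colon G\to\U_n(\F_p)$ with $\rho(u)\neq1$, finishing the argument.

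The one delicate point is the index bookkeeping in the middle paragraph: one must check that the structure theorem produces a quotient $G/G_{(n)}$ of exponent $p^{s}$, that this equals $p^{s'+1}$ for the parameter $s'=s-1$ fed into Proposition~\ref{prop:main}, so that the target group $\U_{p^{s-1}+1}(\F_p)$ still contains an element of order $p^{s}$ (the matrix $1+X$ in the notation of Section 3, with the ambient size there taken to be $p^{s-1}+1$) large enough to detect $\bar u$, and that simultaneously $p^{s-1}+1\leq n$ so that this smaller unipotent group embeds into $\U_n(\F_p)$; all of this rests only on the inequalities $p^{s-1}<n\leq p^{s}$. Apart from this, the easy inclusion, the reduction to $\bar G$, the case $n=1$, and the $p=2$ and $k=\infty$ variants are routine once Proposition~\ref{prop:main} and Corollary~\ref{cor:Zassenhaus quotient} are in hand, so the substantive work of the theorem has in fact already been carried out in Sections 3 and 4 and in Proposition~\ref{prop:main}.
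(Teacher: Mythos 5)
Your proof is correct and follows essentially the same route as the paper: both reduce to the explicit description of $G/G_{(n)}$ from Corollary~\ref{cor:Zassenhaus quotient} and then invoke Proposition~\ref{prop:main}. The only (immaterial) difference is bookkeeping: the paper first uses the constancy $G_{(p^{s}+1)}=\cdots=G_{(p^{s+1})}$ to reduce to $n=p^{s}+1$, where Proposition~\ref{prop:main} applies with target $\U_{p^{s}+1}(\F_p)$ of exactly the right size, whereas you treat general $n$ directly and compose with the block embedding $\U_{p^{s-1}+1}(\F_p)\hookrightarrow\U_n(\F_p)$ — the same argument with shifted indices.
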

\begin{proof}
By Corollary~\ref{cor:Zassenhaus quotient}, for any integer $s\geq 0$ we have $G_{(p^s+1)}=G_{(p^s+2)}=\cdots=G_{(p^{s+1})}$. Thus it is enough to consider the case $n=p^s+1$. Then the statement follows from Corollary~\ref{cor:Zassenhaus quotient} and  Proposition~\ref{prop:main}.
\end{proof}

\subsection*{Demushkin groups of rank 2} (In this subsection we also consider $p=2$.)
Recall (see e.g., \cite{La,Se1}) that a pro-$p$-group $G$ is said to be a Demushkin group if
\begin{enumerate}
\item $\dim_{\F_p} H^1(G,\F_p)<\infty,$ 
\item $\dim_{\F_p} H^2(G,\F_p)=1,$
\item  the cup product $H^1(G,\F_p)\times H^1(G,\F_p)\to H^2(G,\F_p)$ is a non-degenerate bilinear form.
\end{enumerate}
We call $d:=\dim_{\F_p} H^1(G,\F_p)$ the {\it rank} of $G$. When the rank $d=2$ then $G$ has  the following presentation (see \cite[page 147]{Se1}): $G$ is isomorphic to a pro-$p$-group on two generators $x,y$ subject to one relation in one of the following types
\[
\begin{aligned}
\text{(Type 1)}\quad  yxy^{-1}&=x,  \\ 
 \text{(Type 2)}\quad  yxy^{-1}&=x^{1+q} \text{ for some $q$ with ($q=p^k\geq p$ if  $p>2$) and $(q=2^k\geq 4$ if $p=2$)},\\
 \text{(Type 3)}\quad yxy^{-1}&=x^{-1} \text{ and } p=2,\\
 \text{(Type 4)}\quad yxy^{-1}&=x^{-(1+m)} \text{ for some  $m=2^k\geq 4$ and } p=2. 
\end{aligned} 
\]
Let $F$ be a local field of residue characteristic different from $p$. Assume that $F$ contains a primitive $p$-th root of unity. Then the Galois group $G_F(p)$ of the maximal $p$-extension of $F$ is a Demushkin group of rank $2$.   
\begin{lem}
\label{lem:Demushkin rank 2}
Let $G$ be a Demushkin pro-$p$-group generated by $x,y$ and subject to one relation as above. Let $n\geq 2$ and let $s$ be a unique integer such that $p^{s-1}<n\leq p^s$. Then $G_{(n)}=G^{p^s}$ and 
\[
G/G_{(n)}= 
\begin{cases}
 \Z/p^s\Z \times \Z/p^s\Z \text{ if we are in Type 1}, \\ 
\Z/p^s\Z \rtimes \Z/p^s\Z \text { if we are in Type 2},\\ 
 \Z/2^s\Z \rtimes \Z/2^s\Z \text { if we are in Type 3},\\
  \Z/2^s\Z \rtimes \Z/2^s\Z. \text{ if we are in Type 4}.
  \end{cases}
\]
Moreover, let $\sigma$ be the image of $x$ in the outer factor $\Z/p^{s}$, then for each $\tau$ from the inner factor $\Z/p^s\Z$ we have
\[
\begin{aligned}
\sigma\tau\sigma^{-1}= \tau^{1+p^k} \text{ if we are in Type 2},\\
\sigma\tau\sigma^{-1}= \tau^{-1} \text{ if we are in Type 3},\\
\sigma\tau\sigma^{-1}= \tau^{-(1+2^k)} \text{ if we are in Type 4}.
\end{aligned}
\]
\end{lem}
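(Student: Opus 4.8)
The idea is to replace $G$ by an explicit pro-$p$ group and compute its descending $p$-central series and $p$-Zassenhaus filtration directly. First I would use that a Demushkin group of rank $2$ is a Poincar\'e duality group of dimension $2$, hence torsion-free; in particular $\overline{\langle x\rangle}\cong\Z_p$ and $\overline{\langle y\rangle}\cong\Z_p$. In each of the four types the defining relation shows that $\overline{\langle x\rangle}$ is normal in $G$ (conjugation by $y$ carries $x$ to a power of $x$ by a unit $u\in\Z_p^\times$) and that $G/\overline{\langle x\rangle}=\overline{\langle y\rangle}$; since $\Z_p$ is a free pro-$p$ group the extension $1\to\Z_p\to G\to\Z_p\to 1$ splits, so
\[
G\;\cong\;\overline{\langle x\rangle}\rtimes\overline{\langle y\rangle}\;\cong\;\Z_p\rtimes_u\Z_p,\qquad yxy^{-1}=x^u,
\]
with $u=1$ (Type 1), $u=1+p^k$ (Type 2), $u=-1$ (Type 3), $u=-(1+2^k)$ (Type 4). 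The only arithmetic used below is the uniform estimate $v_p(u^{p^m}-1)\ge m+1$ for all $m\ge 0$ (with $v_p(0)=\infty$); this is precisely where the hypotheses $q=2^k\ge 4$ and $m=2^k\ge 4$ in Types 2 and 4 enter.

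\textbf{The descending $p$-central series and the quotient.} Next I would prove by induction on $n$, using the recursion $G^{(n+1)}=(G^{(n)})^p[G^{(n)},G]$, that
\[
G^{(n)}\;=\;\overline{\langle x^{p^{n-1}},\,y^{p^{n-1}}\rangle}.
\]
Expanding in $\Z_p\rtimes_u\Z_p$, for $g=x^{ap^{n-1}}y^{bp^{n-1}}\in G^{(n)}$ the $\overline{\langle x\rangle}$-component of $g^p$ is $x$ raised to $ap^{n-1}\sum_{t=0}^{p-1}u^{tbp^{n-1}}$, which by the estimate above has $p$-adic valuation $\ge n$; this gives $(G^{(n)})^p=\overline{\langle x^{p^n},y^{p^n}\rangle}$. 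Likewise each generating commutator of $[G^{(n)},G]$ is a power $x^{p^{n-1}(u^{\pm1}-1)}$ or $x^{u^{bp^{n-1}}-1}$ lying in $\overline{\langle x^{p^n}\rangle}\subseteq(G^{(n)})^p$, so $G^{(n+1)}=\overline{\langle x^{p^n},y^{p^n}\rangle}$. (For Types 1 and 2 one may instead note that $G$ is uniform and quote $G^{(n)}=G^{p^{n-1}}$ from \cite{DDMS}.) Since $G^{p^s}\subseteq G^{(s+1)}$ for any pro-$p$ group and $\overline{\langle x^{p^s},y^{p^s}\rangle}$ lies in the normal subgroup $G^{p^s}$, we obtain $G^{(s+1)}=\overline{\langle x^{p^s},y^{p^s}\rangle}=G^{p^s}$. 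Finally, the extension $1\to\Z_p/p^s\Z_p\to G/G^{p^s}\to\Z_p/p^s\Z_p\to 1$ shows $|G/G^{p^s}|=p^{2s}$, so the evident surjection $\Z/p^s\Z\rtimes_u\Z/p^s\Z\twoheadrightarrow G/G^{p^s}$ is an isomorphism; this yields the semidirect-product description and the conjugation relation displayed in the statement (in Type 1 the product is direct).

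\textbf{Sandwiching the Zassenhaus term.} To go from the descending $p$-central series to the $p$-Zassenhaus filtration I would bracket $G_{(n)}$ from both sides. For a free pro-$p$ group $S$ one has Jennings' formula $S_{(N)}=\prod_{ip^j\ge N}\gamma_i(S)^{p^j}$ (see \cite{DDMS} and Lemma~\ref{lem:1a}) and Lazard's formula $S^{(N)}=\prod_{i+j\ge N}\gamma_i(S)^{p^j}$; since $i\le p^{i-1}$ for every $i\ge1$, the condition $ip^j\ge p^{s-1}+1$ forces $i+j\ge s+1$, whence $S_{(p^{s-1}+1)}\subseteq S^{(s+1)}$, an inclusion preserved under the quotient $S\twoheadrightarrow G$. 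On the other hand $G^{p^s}\subseteq G_{(p^s)}$ for any pro-$p$ group, by iterating $G_{(N)}\supseteq G_{(\lceil N/p\rceil)}^{p}$. Hence for $p^{s-1}<n\le p^s$,
\[
G^{p^s}\subseteq G_{(p^s)}\subseteq G_{(n)}\subseteq G_{(p^{s-1}+1)}\subseteq G^{(s+1)}=G^{p^s},
\]
so all these groups coincide and $G_{(n)}=G^{p^s}$; combined with the previous step this proves the lemma.

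\textbf{Expected main obstacle.} The technical heart is the inductive computation of $G^{(n)}$. For $p$ odd and for Type 2 the group $G$ is uniform and one may invoke standard theory, but the pro-$2$ groups of Types 3 and 4 are not powerful, so the descending-$p$-central-series computation must be carried out by hand; it hinges on the valuation bookkeeping for $u^{p^m}-1$ recorded in the first step, and keeping that argument uniform over the four types is the delicate point.
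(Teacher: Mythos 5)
Your proof is correct, and it reaches the key identity $G_{(n)}=G^{p^s}$ by a genuinely different route than the paper. The paper's proof is very short: for Types 1, 2 and 4 it asserts $[G,G]\subseteq G^p$ (resp.\ $[G,G]\subseteq G^4$), concludes that $G$ is powerful, and quotes \cite[Exercise 4, p.~289]{DDMS}, while Type 3 is left to an unspecified induction on $n$ and the structure of $G/G_{(n)}$ is declared straightforward. You instead realize $G$ explicitly as $\Z_p\rtimes_u\Z_p$, compute the descending $p$-central series by hand to get $G^{(s+1)}=\overline{\langle x^{p^s},y^{p^s}\rangle}=G^{p^s}$ uniformly in all four types, and then trap the Zassenhaus term in the chain $G^{p^s}\subseteq G_{(p^s)}\subseteq G_{(n)}\subseteq G_{(p^{s-1}+1)}\subseteq G^{(s+1)}$; for the last inclusion you only need the Jennings formula for $S_{(N)}$ in a free pro-$p$ group together with the elementary containment $\gamma_i(S)^{p^j}\subseteq S^{(i+j)}$, so the full Lazard description of $S^{(N)}$ is not even required. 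Your route is more work, but it buys uniformity and is in fact more robust: as you observe, the Type 3 and Type 4 groups are not powerful --- for Type 4 one has $v_2(u-1)=1$ with $u=-(1+2^k)$, $k\ge 2$, hence $[G,G]=\overline{\langle x^2\rangle}$ while $G^4=\overline{\langle x^4,y^4\rangle}$ --- so the powerfulness argument the paper invokes for Type 4 does not actually apply to that case, which genuinely requires the kind of hands-on induction you carry out. Two presentational caveats: the inductive computation of $(G^{(n)})^p$ and $[G^{(n)},G]$ is where all the content sits and should be written out in full with the valuation estimates you state; and in the lemma's wording the generator of the ``outer factor'' should be read as the image of $y$ (the acting element), consistent with your identification of $\overline{\langle x\rangle}$ as the inner factor.
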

\begin{proof} If we are in Types (1)-(2), one can see that $[G,G]\subseteq G^p$ and if we are in Type 4, one has $[G,G]\subseteq G^4$. Hence $G$ is  {\it powerful} (\cite[Chapter 3, Definition 3.1]{DDMS}). Hence in these cases, $G_{(n)}=G^{p^s}$ by \cite[Exercise 4, p. 289]{DDMS}. If we are in Type 3, one can show that $G_{(n)}=G^{2^s}$ by induction on $n$.

Then the remaining statements are  straightforward.
\end{proof}
\begin{prop}
\label{prop:Demushkin rank 2}
 Let $G$ be a pro-$p$-group. Assume that $G$ is a Demushkin group of rank $2$. 
Then for any natural number $n$,
\[
G_{(n)}=\bigcap_{\rho}\ker(\rho:G\to \U_n(\F_p)),
\]
where $\rho$ runs over the set of all continuous homomorphisms $G\to \U_n(\F_p)$. 
\end{prop}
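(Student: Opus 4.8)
The plan is to combine the explicit structure of $G/G_{(n)}$ from Lemma~\ref{lem:Demushkin rank 2} with the representation-construction of Proposition~\ref{prop:main}, following the same scheme as the proof of Theorem~\ref{thm:main}. One inclusion is automatic: Lemma~\ref{lem:3a}(b) gives $G_{(n)}\subseteq\bigcap_\rho\ker(\rho\colon G\to\U_n(\F_p))$. So everything reduces to showing that every $u\in G\setminus G_{(n)}$ is detected by some degree-$n$ unipotent representation.

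First I would reduce to the case $n=p^j+1$ with $j\ge 0$ an integer (the case $n=1$ being trivial). For arbitrary $n\ge 2$ pick the largest $j$ with $p^j+1\le n$; then both $n$ and $p^j+1$ lie in $(p^j,p^{j+1}]$, so Lemma~\ref{lem:Demushkin rank 2} gives $G_{(n)}=G^{p^{j+1}}=G_{(p^j+1)}$. As $\U_{p^j+1}(\F_p)$ embeds into $\U_n(\F_p)$, the intersection of kernels in degree $n$ is contained in that in degree $p^j+1$, and the latter equals $G_{(p^j+1)}=G_{(n)}$ once the special case is proved; combined with the automatic inclusion this gives equality.

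For $n=p^j+1$, I would set $H:=G/G_{(n)}$ and let $\bar u\ne 1$ be the image of $u$. Applying Lemma~\ref{lem:Demushkin rank 2} with its integer equal to $j+1$ (valid since $p^j<p^j+1\le p^{j+1}$), the group $H$ is $\Z/p^{j+1}\Z\times\Z/p^{j+1}\Z$ in Type 1, and a semidirect product $\Z/p^{j+1}\Z\rtimes\Z/p^{j+1}\Z$ with twist $\tau\mapsto\tau^{1+p^k}$, $\tau\mapsto\tau^{-1}$, or $\tau\mapsto\tau^{-(1+2^k)}$ in Types 2, 3, 4. These are exactly the four shapes in Proposition~\ref{prop:main}, taking its parameter ``$s$'' to be our $j$ (so its ``$n=p^s+1$'' is our $n$ and its modulus $\Z/p^{s+1}\Z$ is $\Z/p^{j+1}\Z$): Type 1 is case (1) with $|I|=2$, and Types 2, 3, 4 are cases (2), (4), (3) with $|I|=1$. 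Hence Proposition~\ref{prop:main} produces a continuous homomorphism $\bar\rho\colon H\to\U_n(\F_p)$ with $\bar\rho(\bar u)\ne 1$, and inflating along $G\twoheadrightarrow H$ yields the desired $\rho$.

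I do not expect a real obstacle: the substance lies in Lemma~\ref{lem:Demushkin rank 2} and Proposition~\ref{prop:main}. The points needing attention are the index bookkeeping between ``$s$'' in Proposition~\ref{prop:main}, ``$j$'' here, and ``$j+1$'' in Lemma~\ref{lem:Demushkin rank 2}; checking that the Type 2 hypothesis $q=p^k\ge p$ (and $q=2^k\ge 4$ when $p=2$) is precisely what makes case (2) of Proposition~\ref{prop:main} applicable; and confirming that Type 3, which is not powerful and required a separate induction in Lemma~\ref{lem:Demushkin rank 2}, is genuinely covered — it is, by case (4) of Proposition~\ref{prop:main}.
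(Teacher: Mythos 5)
Your proposal is correct and follows exactly the route the paper intends: the paper's own proof is just the one-line remark that one argues as in Theorem~\ref{thm:main}, replacing Corollary~\ref{cor:Zassenhaus quotient} by Lemma~\ref{lem:Demushkin rank 2}, which is precisely the reduction to $n=p^j+1$ followed by an application of Proposition~\ref{prop:main} that you carry out. Your index bookkeeping and the matching of Types 1--4 to cases (1), (2), (4), (3) of Proposition~\ref{prop:main} are accurate, so this is a faithful (and more detailed) version of the paper's argument.
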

\begin{proof}
This proof is similar to that of Theorem~\ref{thm:main}, using Lemma~\ref{lem:Demushkin rank 2} instead of Corollary~\ref{cor:Zassenhaus quotient}.
\end{proof}
\begin{rmk}
If the rank of a Demushkin group $G$ is 1 then $G$ is a cyclic group of order 2, and hence the Kernel $n$-Unipotent Conjecture is true for $G$.
\end{rmk}
\begin{rmk}
 Every Demushkin group $G$ of rank 2 is realizable as $G_F(p)$ for some field $F$. In fact, if its relation is of Type 1, then $G\simeq G_F(2)$ with $F=\C((X))((Y))$ (see \cite[Corollary 3.9, part (2)]{Wa1}). Now assume that the relation is of Type 2. By Dirichlet's theorem on primes in arithmetic progressions, there is a prime number $\ell$ such that $\ell= p^k+1 + p^{k+1}a$, for some $a\in \Z$. Then by \cite[Exercise 2, page 98]{Se}, the Galois group $G_{\Q_\ell}(p)$ is isomorphic to a Demushkin group on two generators $x,y$ with one relation \[yxy^{-1}= x^{1+p^k}.\]
Similarly, assume that we are given a pro-2 Demushkin group $G$ with generators $x,y$ such that  the relation is of Type 4: 
\[
yxy^{-1}= x^{-(1+2^k)}.
\]
Then by choosing a prime number $\ell$ such that $\ell =2^k-1+ 2^{k+1}a$, for some $a\in \Z$, we see that the Galois group $G_{\Q_\ell}(2)$ is isomorphic to $G$ (see \cite[Exercise 2, page 98]{Se}).  For the case that our given pro-$2$ Demushking group is of Type 3,  we refer the reader to \cite[Table 5.2, Example 5]{JW}. 
Unfortunately, in this table there is a reference to Remark 2.6 which is missing in the paper in \cite{JW}. However, for our purposes, it suffices to observe that the required field is $M((T))$. Where $M$ is a maximal algebraic extension of $\R(X)(\sqrt{-(1+X^2)})$, subject to the condition that $\sqrt{-1}\not\in M$.
\end{rmk}

%%%%%%%%%%%%%%%%%%%%%%%%%%%%%%%%%%%%%%%%%%%%%%%%%%%%%%%%%%%%%
%%%%%%%%%%%%%%%%%%%%%%%%%%%%%%%%%%%%%%%%%%%%%%%%%%%%%%%%%%%%
%%%%%%%%%%%%%%%%%%%%%%%%%%%%%%%%%%%%%%%%%%%%%%%%%%%%%%%%%%%%%%%%
\section{Comparison between filtrations}
\label{sec:6}
 Let $G$ be a pro-$p$-group. Let us consider two filtrations on $G$: the descending $p$-central series $(G^{(i)})$ and the $p$-Zassenhaus filtration $(G_{(i)})$ of $G$. 
In general we always have $G^{(i)}\subseteq G_{(i)}$, for all $i$ and all $p$ and $G^{(3)}=G_{(3)}$ if $p=2$. 
%In particular $G^{(3)}\subseteq G_{(3)}$, and the equality holds if $p=2$. 
From the very definition, we get $G_{(p+1)}\subseteq G^{(3)}$. Indeed, we have
\[
 G^{(3)}=(G^{(2)})^p[G^{(2)},G] \text{ and } G^{(p+1)}=(G_{(2)})^p\prod_{i+j=p+1}[G_{(i)},G_{(j)}].
\]
Since $G_{(2)}=G^{(2)}$ and $[G_{(i)},G_{(p+1-i)}]\subseteq [G_{(2)},G]=[G^{(2)},G]$ for all $i=1,\ldots,p$, we get $G_{(p+1)}\subseteq G^{(3)}$. For convenience, we introduce another filtration $G_{<n>}$, called the {\it kernel filtration},  on  group $G$ which is defined by
\[
 G_{<n>}:= \bigcap_{\rho} \ker(\rho\colon G\to\U_{p+1}(\F_p)),
\]
where the intersection is taken over the collection of all (continuous) group homomorphisms $\rho\colon G\to \U_{p+1}(\F_p)$. Note  that by  \cite[Lemma 3.6]{MT1}, one always has 
\[
 G_{(n)}\subseteq G_{<n>}.
\]

The kernel conjecture can be restated as the following:
 \begin{conj}
  Let $F$ be a field of characteristic $\not=p$, containing a primitive $p$th root of unity. Let $G=G_F(p)$ be the Galois group of the maximal pro-$p$-extension of $F$. Then for any $n$, one has
\[
 G_{(n)}=G_{<n>}.
\]
 \end{conj}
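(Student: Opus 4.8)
The displayed statement is the Kernel $n$-Unipotent Conjecture (Conjecture~\ref{conj:kernel intersection}) recast through the kernel filtration, so in full generality there is nothing to settle here; what I can propose is the line of attack that succeeds in the cases treated in this paper, together with an account of where it stalls. One inclusion is free: since $\U_n(\F_p)_{(n)}=1$ (Lemma~\ref{lem:3a}(b), equivalently \cite[Lemma 3.6]{MT1}), every continuous $\rho\colon G\to\U_n(\F_p)$ is trivial on $G_{(n)}$, so $G_{(n)}\subseteq G_{<n>}$ always. All the content is in the reverse inclusion $G_{<n>}\subseteq G_{(n)}$. Since any such $\rho$ factors through $Q_n:=G/G_{(n)}$, this is the finite assertion that the natural map
\[
Q_n\;\longrightarrow\;\prod_{\rho}\U_n(\F_p)
\]
(product over all continuous $\rho\colon G\to\U_n(\F_p)$) is injective; equivalently, every $1\neq\sigma\in Q_n$ is detected by some unipotent representation.

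The plan is the two-step strategy already used in Sections~3--5. First, pin down the isomorphism type of $Q_n$. For $F$ odd rigid this is Corollary~\ref{cor:Zassenhaus quotient}: writing $n=p^s+1$, one gets $Q_n\cong\bigl(\prod_{\mathcal I}\Z/p^{s}\Z\bigr)\rtimes\Z/p^s\Z$ (or the direct product when $\mu_{p^\infty}\subseteq F$), the outer cyclic factor acting on each inner copy by $\tau\mapsto\tau^{p^k+1}$; the Demushkin rank-$2$ analogue is Lemma~\ref{lem:Demushkin rank 2}. Second, realize $Q_n$ faithfully. Here the input is Section~3: the unipotent $B=1+X\in\U_{p^s+1}(\F_p)$ has order $p^{s+1}$, Lemma~\ref{lem:2} produces a unipotent $A$ with $ABA^{-1}=B^{1+p^k}$ (and the degenerate variants for $p=2$), and Proposition~\ref{prop:order} computes the order of $A$, so the twisting automorphism of $Q_n$ is visibly conjugation inside $\U_{p^s+1}(\F_p)$. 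Proposition~\ref{prop:main} then finishes it by splitting a given $1\neq u\in Q_n$ according to whether its outer component vanishes and writing down, in each case, a homomorphism $\rho$ sending the relevant generators to $B$, $A$, or $1$; this is exactly how Theorem~\ref{thm:main} and Proposition~\ref{prop:Demushkin rank 2} go through.

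The obstacle to the conjecture in general is that this program requires an essentially complete presentation of $Q_n=G_F(p)/G_F(p)_{(n)}$, which is not available for arbitrary $F$ --- producing precisely this sort of structural information is part of what the conjecture is for. I see two ways one might try to get past it, neither currently in reach. One is to widen the class of fields for which the Zassenhaus quotients of $G_F(p)$ are understood concretely enough to rerun the construction above. The other is cohomological: by Dwyer's theorem the liftings through $\U_n(\F_p)$ that we want are governed by the vanishing of $(n{-}1)$-fold Massey products in $H^2(G_F,\F_p)$, and the Rost--Voevodsky theorem makes $H^*(G_F,\F_p)$ a quadratic algebra, so one would hope to manufacture enough unipotent representations from the Vanishing $n$-Massey Conjecture (Conjecture~\ref{conj:vanishing n-Massey}) together with quadraticity. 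But Conjecture~\ref{conj:vanishing n-Massey} is itself open in this generality, and even granting it, the step from ``all Massey products vanish'' to ``$Q_n$ is unipotently faithful'' needs control of all of $Q_n$, not merely its degree-$\leq 2$ cohomology --- closing that gap is, I expect, the genuinely hard part.
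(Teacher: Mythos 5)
The statement you were asked to prove is itself a conjecture (it is the Kernel $n$-Unipotent Conjecture restated via the kernel filtration), and the paper offers no proof of it in general; you correctly identify this, establish the only part that is unconditionally true (the inclusion $G_{(n)}\subseteq G_{<n>}$, exactly as the paper does via Lemma~\ref{lem:3a}(b) and \cite[Lemma 3.6]{MT1}), and accurately reconstruct the two-step strategy (explicit Zassenhaus quotients plus the matrix constructions of Section~3 fed into Proposition~\ref{prop:main}) by which the paper settles the rigid and rank-$2$ Demushkin cases. The only blemish is an off-by-one in your citation of Corollary~\ref{cor:Zassenhaus quotient}: for $n=p^s+1$ the inner and outer factors of $G/G_{(n)}$ are copies of $\Z/p^{s+1}\Z$, not $\Z/p^{s}\Z$, which is what makes the order-$p^{s+1}$ matrix $B=1+X\in\U_{p^s+1}(\F_p)$ the right target.
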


Inspired by Corollary~\ref{cor:embedding} and the kernel conjecture, we have the following result which is interesting in its own right.
%A by-product of the above proposition is the following result. 
\begin{prop}
\label{prop:smallest}
 Let $p$ be  prime and let $F$ be a field of characteristic $\not=p$, which contains a primitive $p$-th root of 1. Let $G=G_F(p)$ be the Galois group of the maximal $p$-extension of $F$.
\begin{enumerate}
\item We have \[
G_{(p+1)}\subseteq G_{<p+1>} \subseteq G^{(3)}.
\]
%where the intersection is taken over the collection of all (continuous) group homomorphisms $\rho\colon G\to \U_{p+1}(\F_p)$.
\item  Assume further that $G$ is not isomorphic to the trivial group $1$ or the cyclic group $\Z/2\Z$ then  $p+1$ is the smallest integer $n$ with the property that
 $G_{(n)}\subseteq G^{(3)}$. 
 \end{enumerate}
\end{prop}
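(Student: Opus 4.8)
The plan is to prove the two parts of Proposition~\ref{prop:smallest} by combining the filtration inclusions already recorded in this section with the embedding result Corollary~\ref{cor:embedding}, and with the known $3$-unipotent property of $G=G_F(p)$. For part (1), the inclusion $G_{(p+1)}\subseteq G_{<p+1>}$ is exactly the instance of \cite[Lemma 3.6]{MT1} recalled above, so the only thing to prove is $G_{<p+1>}\subseteq G^{(3)}$. Here I would argue that any continuous $\rho\colon G\to\U_{p+1}(\F_p)$ kills $G^{(3)}$: indeed $\U_{p+1}(\F_p)$ has nilpotency class $p$ and exponent $p$ (every element of $\U_{p}(\F_p)$ — and in fact of $\U_{p+1}(\F_p)$ acting by $X^{p}=0$ when the size is $p+1$... one must be a little careful) — the clean way is to observe that $\rho$ factors through $G/G^{(?)}$; but the quantity we need is precisely that $\rho$ is trivial on $G^{(3)}$, which would follow if $\U_{p+1}(\F_p)^{(3)}$ has a controlled structure. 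Rather than computing $\U_{p+1}(\F_p)^{(3)}$, the cleaner route is: $G^{(3)}$ is the intersection of kernels of all $\rho\colon G\to P$ with $P$ a $p$-group of $p$-central class $\le 2$, equivalently of exponent-$p$ class-$\le 2$ groups together with cyclic $\Z/p^2\Z$; so it suffices to show every homomorphism $G\to \Z/p^2\Z$ and every homomorphism $G\to$ (extraspecial or elementary abelian) factors through some $\U_{p+1}(\F_p)$. Elementary abelian and $\Z/p^2\Z$ and $M_{p^3}$ all embed into $\U_{p+1}(\F_p)$ by Corollary~\ref{cor:embedding}, and class-$2$ exponent-$p$ groups embed into products of Heisenberg groups, each of which embeds in $\U_3(\F_p)\subseteq\U_{p+1}(\F_p)$. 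Hence $\bigcap_\rho\ker(G\to\U_{p+1}(\F_p))\subseteq\bigcap_{P}\ker(G\to P)=G^{(3)}$, giving the right-hand inclusion.

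For part (2), I must show $p+1$ is the \emph{smallest} $n$ with $G_{(n)}\subseteq G^{(3)}$, under the hypothesis that $G$ is neither trivial nor $\Z/2\Z$. By part (1) and the monotonicity $G_{(n)}\supseteq G_{(p+1)}$ for $n\le p+1$, I only need to exhibit, for each $n\le p$, an element of $G_{(n)}$ not lying in $G^{(3)}$, or equivalently to show $G_{(p)}\not\subseteq G^{(3)}$. The idea is to detect this after passing to a suitable quotient of $G$. Since $G$ is an absolute Galois pro-$p$ group of a field containing $\zeta_p$ and $G\ne 1,\Z/2\Z$, the quotient $G/G^{(2)}\rtimes$-type structure theory (Theorem~\ref{thm:CMQ}-style, but here in full generality one uses that $G$ surjects onto either $\Z/p^2\Z$, $\Z_p\times\Z_p$, or a nonabelian $\U_3(\F_p)$ — in all cases onto a group $Q$ with $Q_{(p)}\ne Q^{(3)}$). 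Concretely: if $\dim_{\F_p}H^1(G,\F_p)\ge 2$ then $G$ surjects onto the free pro-$p$ group $S$ of rank $2$ modulo a controlled subgroup, and by Theorem~\ref{thm:intersection}(a) / Lemma~\ref{lem:1a}(c) the Zassenhaus filtration on $S$ satisfies $S_{(p)}\supsetneq S_{(p+1)}$ while $S^{(3)}$ is much smaller, so $S_{(p)}\not\subseteq S^{(3)}$; if $\dim_{\F_p}H^1(G,\F_p)=1$ then $G$ is cyclic, and since $G\ne 1,\Z/2\Z$ it surjects onto $\Z/p^2\Z$ (for $p$ odd $\zeta_p\in F$ forces $\mu_{p^2}$-type behaviour, or simply $G$ is procyclic of order divisible by $p^2$ or by an odd prime), and $(\Z/p^2\Z)_{(p)}=p\Z/p^2\Z\ne 0=(\Z/p^2\Z)^{(3)}$. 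In each case the image of the exhibited element witnesses $G_{(p)}\not\subseteq G^{(3)}$.

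The main obstacle I anticipate is the case analysis in part (2) — in particular making precise, for a general field $F$ with $\zeta_p\in F$ and $\char F\ne p$, that $G=G_F(p)$ always admits a quotient $Q$ with $Q_{(p)}\not\subseteq Q^{(3)}$ unless $G\in\{1,\Z/2\Z\}$. For $p$ odd this should be automatic because $\mu_p\subseteq F$ forces the cyclotomic character, and a procyclic $G$ is then either trivial or infinite (hence surjects onto $\Z/p^2\Z$), while $d\ge 2$ gives the free-rank-$2$ argument above; the delicate point is $p=2$, where $G$ could be $\Z/2\Z$ (excluded by hypothesis) and one must verify that no other ``too small'' possibility slips through — here one invokes that $G_F(2)\ne 1$ forces $-1\notin F^{\times 2}$ or a quadratic extension giving $d\ge 1$, and $\Z/2\Z$ is the unique procyclic option not surjecting onto $\Z/4\Z$, which is precisely why it is excluded. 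I would also double-check the elementary claim $(\Z/p^2\Z)_{(p)}=p\Z/p^2\Z$ directly from the Zassenhaus recursion $G_{(n)}=G_{(\lceil n/p\rceil)}^p\prod_{i+j=n}[G_{(i)},G_{(j)}]$, which for cyclic $G$ collapses to $G_{(n)}=G^{p^{\lceil \log_p n\rceil}}$, and the analogous elementary computation that $\U_3(\F_p)$, being of exponent $p$ and class $2$, satisfies $\U_3(\F_p)^{(3)}=1$ while $\U_3(\F_p)_{(2)}=[\U_3,\U_3]\cdot\U_3^{\,p}\ne\U_3^{(3)}$ only matters through the rank-$\ge2$ free quotient, so the free-group computation via Lemma~\ref{lem:1a}(c) is really the load-bearing input and is already available.
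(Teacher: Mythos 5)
Your proof of part (1) has a genuine gap at the inclusion $G_{<p+1>}\subseteq G^{(3)}$. You try to deduce it from a purely group-theoretic description of $G^{(3)}$ as the intersection of kernels of maps to $p$-groups $P$ with $P^{(3)}=1$, together with the claim that all such $P$ are separated by homomorphisms to $\U_{p+1}(\F_p)$. That claim is false, and in fact no argument of this purely group-theoretic shape can work: the Appendix of this paper (Remarks, item (5)) constructs pro-$p$ groups with $G_{<p+1>}\not\subseteq G^{(3)}$, so the inclusion is a genuinely arithmetic property of $G_F(p)$. Concretely, taking $G=S/R$ as in the Appendix with $N>|\U_{p+1}(\F_p)|$ generators and all commutators $[x_i,x_j]$ identified, the finite group $P=G/G^{(3)}$ satisfies $P^{(3)}=1$, yet every homomorphism $P\to\U_{p+1}(\F_p)$ kills the nontrivial element $[\bar x_1,\bar x_2]$, so $P$ does not embed into any product of copies of $\U_{p+1}(\F_p)$. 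Your intermediate reduction is also incorrect: a group with $P^{(3)}=1$ need not be residually (exponent-$p$ class $\le 2$ or $\Z/p^2\Z$) --- already $M_{p^3}$ fails this, since $x^p\neq 1$ dies in every such quotient. The paper instead invokes the Main Theorem of \cite{EM1}, a deep result about maximal pro-$p$ Galois groups of fields containing $\zeta_p$, which says that for such $G$ the subgroup $G^{(3)}$ is already the intersection of kernels of homomorphisms to $\Z/p^2\Z$ and $M_{p^3}$ alone; only then does Corollary~\ref{cor:embedding} finish the job. Some input of this kind cannot be avoided.

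Part (2) has a parallel gap. Your reduction to showing $G_{(p)}\not\subseteq G^{(3)}$ is fine, and the procyclic case is handled correctly (modulo invoking Artin--Schreier/Becker to rule out finite procyclic $G$ other than $1$ and $\Z/2\Z$). But in the main case $\dim_{\F_p} H^1(G,\F_p)\ge 2$ your argument does not go through: $G$ does not surject onto a free pro-$p$ group of rank $2$, and knowing $S_{(p)}\not\subseteq S^{(3)}$ for the free cover $S$ of $G$ tells you nothing about $G$, since this non-inclusion passes from a quotient up to the group, not downward. What is actually needed is that $G/G^{(3)}$ contains an element of order $p^2$; then $G/G_{(p)}$, which has exponent $p$ because $G^p\subseteq G_{(p)}$, cannot surject onto $G/G^{(3)}$, whence $G_{(p)}\not\subseteq G^{(3)}$. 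That $G/G^{(3)}$ has exponent exactly $p^2$ is again an arithmetic fact: for odd $p$ the paper takes any $x\notin G^{(2)}$ and uses \cite[Proposition 12.3]{EM1} (every element of order $p$ in $G/G^{(3)}$ lies in $G^{(2)}/G^{(3)}$), and for $p=2$ it uses the classification of abelian $W$-groups from \cite{MS1} to show that exponent $2$ would force $G\in\{1,\Z/2\Z\}$. Neither input appears in your proposal, so the case $\dim_{\F_p} H^1(G,\F_p)\ge 2$ remains open as written.
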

\begin{proof} a) %We first proof that $G_{(p+1)}\subseteq G^{(3)}$. 
 %Now we show that $G^{(3)}$ does not contains $$
  By \cite[Main Theorem]{EM1}, $G^{(3)}$ is the intersection of the kernels of all homomorphisms from $G$ to $\Z/p^2\Z$ or to $M_{p^3}$. Then  Corollary~\ref{cor:embedding} implies that 
\[
 G^{(3)}\supseteq \bigcap_{\rho} \ker(\rho\colon G\to\U_{p+1}(\F_p))=G_{<p+1>}.
\]
 \\
b) We first show that if $G$ is not isomorphic to $1$ or $\Z/2\Z$ then  $G^{[3]}:=G/G^{(3)}$ is of exponent exactly $p^2$. It is clear that every element of $G^{[3]}$ is of order at most $p^2$, so it remains to show that $G^{[3]}$ contains an element of order $p^2$.

First we treat the case  $p>2$. Since $G$ is a non-trivial pro-$p$-group, one has ${\rm Hom}(G,\F_p)\simeq {\rm Hom}(G/G^{(2)},\F_p)\not=1$ (see for example \cite[Corollary 2, page 19]{Se}). In particular we can take an element $x\in G\setminus G^{(2)}$. Then $x$ has order exactly $p^2$ in $G^{[3]}$ because $x^{p^2}\in G^{(3)}$ and by \cite[Proposition 12.3]{EM1} (see also \cite[Theorem A.3]{BLMS}) we know that every element of $G/G^{(3)}$ of order $p$ is in fact in  $G^{(2)}/G^{(3)}$. 

Now we consider the case $p=2$ and assume that $G^{[3]}=G/G^{(3)}$ is of exponent (at most) 2. This implies in particular that $G^{[3]}$ is abelian and  is isomorphic to $(\Z/2\Z)^I$. 
By the classification of  abelian $W$-groups \cite[Theorems  3.12 and 3.13]{MS1} (note that the $W$-groups defined there coincide with groups of the form $G_F^{[3]}$), then $G^{[3]}$, which is also denoted by ${\mathcal G}_F$ in \cite{MS1},  is the trivial group or isomorphic to $\Z/2Z$. 
% Field theory implies that $G$ is itself trivial or isomorphic to $\Z/2\Z$.  
One can check directly that this means that $F$ is $F(2)$, or $F$ has a unique quadratic extension $L=L(2) =F(2)$. (Here for a field $F$, we denote $F(2)$ the maximal $2$-extension of $F$.) Hence $G$ itself is trivial or isomorphic to$\ Z/2\Z$.
 This contradicts  our assumption. Hence $G^{[3]}$ is of exponent $4$. 

On the other hand, from the definition of the $p$-Zassenhaus filtration, we see that $G/G_{(p)}$ is of exponent (at most) $p$. Therefore there cannot exist any surjections from 
\[
G/G_{(p)} \to G/G^{(3)}.
\]
In particular, $G^{(3)}$ does not contain $G_{(p)}$ and hence also does not contain $G_{(n)}$ for all $1\leq n\leq p$. 
 \end{proof}
\begin{rmk} a) The proof of Proposition~\ref{prop:smallest} part b) shows that for a non-trivial pro-$p$-group $G$, which is also not isomorphic to $\Z/2\Z$,  if $G/G^{(3)}$ is of exponent $p$ then $G\not\cong G_F(p)$ for any field $F$ containing a primitive $p$-th root of unity. Note also that if $G/G^{(3)}$ is of exponent $p$ then $G^{(3)}$ in fact contains $G_{(p)}$.

b) There are examples of pro-$p$-groups $G$ such that $G$ does not satisfy the conclusion in part a) of Proposition~\ref{prop:smallest} (see Appendix).  Then such pro-$p$-groups also cannot   be realized as $G_F(p)$  for any field $F$ containing a primitive $p$-th root of unity.
\end{rmk}

\section{Review of Massey products in the cohomology of profinite groups}
\label{sec:Review Massey}

Let $p$ be a prime number. Let  $G$ be a profinite group. We consider $\F_p$ as a trivial discrete $G$-module. Let $\sC^\bullet(G,\F_p,\delta)$ be the standard inhomogeneous continuous cochain complex of $G$
with coefficients in $\F_p$ \cite[Ch.\ I, \S2]{NSW}. We write $H^i(G,\F_p)$ for the corresponding cohomology groups.

We shall assume that $\alpha_1,\ldots,\alpha_n$ are elements in $H^1(G,\F_p)$.

\begin{defn}
 A collection $M=(a_{ij})$, $1\leq i<j\leq n+1$, $(i,j)\not=(1,n+1)$, of elements of $\sC^1(G,\F_p)$ is called a {\it defining system} for the {\it $n$-fold  Massey product} $\langle \alpha_1,\ldots,\alpha_n\rangle$ if the following conditions are fulfilled:
\begin{enumerate}
\item $a_{i,i+1}$ represents $\alpha_i$.
\item $\delta \alpha_{ij}= \sum_{l=i+1}^{j-1} a_{il}\cup a_{lj}$ for $i+1<j$.
\end{enumerate}
Then $\sum_{k=2}^{n} a_{1k}\cup a_{k,n+1}$ is a $2$-cocycle.
Its  cohomology class in $H^2(G,\F_p)$  is called the {\it value} of the product relative to the defining system $M$,
and is denoted by $\langle \alpha_1,\ldots,\alpha_n\rangle_M$.

The product $\langle \alpha_1,\ldots,\alpha_n\rangle$ itself is the subset of $H^2(G,\F_p)$ consisting of all  elements which can be written in the form $\langle \alpha_1,\ldots, \alpha_n\rangle_M$ for some defining system $M$. 
\end{defn}

From the condition (2) in the above definition, we see that if $\langle \alpha_1,\ldots,\alpha_n\rangle$ is defined, then  every $k$-fold Massey product $\langle \alpha_i,\ldots,\alpha_{i+k-1}\rangle$ with $2\leq k<n$ is defined and contains 0.
%The product $\langle \alpha_1,\ldots, \lalpha_n\rangle$ is {\it uniquely defined} if it contains only one element. 

%As observed by Dwyer \cite{Dwy75} in the discrete context (see also \cite[\S8]{Ef} in the profinite case), defining systems for this DGA can be interpreted in terms of upper-triangular unipotent representations of $G$, as follows.

Recall that $\U_{n+1}(\F_p)$ is the group of all upper-triangular unipotent $(n+1)\times(n+1)$-matrices
with entries in  $\F_p$.
Let $Z_{n+1}(\F_p)$ be the subgroup of all such matrices with all off-diagonal entries
being $0$ except at position $(1,n+1)$.
We may identify $\U_{n+1}(\F_p)/Z_{n+1}(\F_p)$ with the group $\bar\U_{n+1}(\F_p)$
of all upper-triangular unipotent $(n+1)\times(n+1)$-matrices with entries over $\F_p$
with the $(1,n+1)$-entry omitted.

For a representation $\rho\colon G\to \U_{n+1}(\F_p)$ and $1\leq i< j\leq n+1$,
let $\rho_{ij}\colon G\to \F_p$
be the composition of $\rho$ with the projection from $\U_{n+1}(\F_p)$ to its $(i,j)$-coordinate.
We use  similar notation for representations $\bar\rho\colon G\to\bar\U_{n+1}(\F_p)$.
Note that $\rho_{i,i+1}$ (resp., $\bar\rho_{i,i+1}$) is a group homomorphism.

\begin{thm}[{\cite[Theorem 2.4]{Dwy75}}]
\label{thm:Dwyer}
Let $\alpha_1,\ldots,\alpha_n$ be elements of $H^1(G,\F_p)$. There is a one-one correspondence $M\leftrightarrow \bar\rho_M$ between defining systems $M$ for $\langle \alpha_1,\ldots,\alpha_n\rangle$ and group homomorphisms $\bar\rho_M:G\to \bar{\U}_{n+1}(\F_p)$ with $(\bar\rho_M)_{i,i+1}= -\alpha_i$, for $1\leq i\leq n$.

Moreover $\langle \alpha_1,\ldots,\alpha_n\rangle_M=0$ in $H^2(G,\F_p)$ if and only if the dotted arrow exists in the following  commutative diagram
\[
\xymatrix{
& & &G \ar@{->}[d]^{\bar\rho_M} \ar@{-->}[ld]\\
0\ar[r]& \F_p\ar[r] &\U_{n+1}(\F_p)\ar[r] &\bar{\U}_{n+1}(\F_p)\ar[r] &1.
}
\]
\end{thm}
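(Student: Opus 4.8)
The plan is to translate the matrix identity ``$\rho$ is a homomorphism'' into the system of cochain equations defining a defining system, and then read off the obstruction to filling in the one missing matrix entry. For a function $\rho\colon G\to\U_{n+1}(\F_p)$ write $\rho(g)=1+\sum_{i<j}\rho_{ij}(g)e_{ij}$; each $\rho_{ij}\colon G\to\F_p$ is automatically a continuous $1$-cochain precisely when $\rho$ is continuous. Expanding the matrix product, the $(i,j)$-entry of $\rho(g)\rho(h)$ is $\rho_{ij}(g)+\rho_{ij}(h)+\sum_{i<l<j}\rho_{il}(g)\rho_{lj}(h)$, so $\rho$ is a (continuous) homomorphism if and only if
\[
\delta\rho_{ij}=-\sum_{i<l<j}\rho_{il}\cup\rho_{lj}\qquad\text{for all }1\le i<j\le n+1.
\]
Setting $a_{ij}:=-\rho_{ij}$ turns this into $\delta a_{ij}=\sum_{i<l<j}a_{il}\cup a_{lj}$, which is exactly condition (2) in the definition of a defining system, while the superdiagonal reads $a_{i,i+1}=-\rho_{i,i+1}$, i.e.\ $\rho_{i,i+1}=-a_{i,i+1}$ represents $-\alpha_i$ when $a_{i,i+1}$ represents $\alpha_i$.

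Now restrict to $\bar\U_{n+1}(\F_p)$, i.e.\ forget the $(1,n+1)$-entry. A continuous homomorphism $\bar\rho\colon G\to\bar\U_{n+1}(\F_p)$ with $\bar\rho_{i,i+1}=-\alpha_i$ is, by the computation above applied to all pairs $(i,j)\neq(1,n+1)$, the same datum as a collection $(a_{ij})_{(i,j)\neq(1,n+1)}$ of $1$-cochains with $a_{ij}:=-\bar\rho_{ij}$ satisfying (1) and (2); that is, a defining system $M$ for $\langle\alpha_1,\dots,\alpha_n\rangle$. This establishes the bijection $M\leftrightarrow\bar\rho_M$ of the first assertion, with $(\bar\rho_M)_{ij}=-a_{ij}$.

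For the second assertion, a lift of $\bar\rho_M$ to a continuous homomorphism $\rho\colon G\to\U_{n+1}(\F_p)$ amounts to choosing a continuous function $\rho_{1,n+1}\colon G\to\F_p$ making the $(1,n+1)$-instance of the displayed homomorphism equation hold, i.e.\ $\delta\rho_{1,n+1}=-\sum_{1<l<n+1}\rho_{1l}\cup\rho_{l,n+1}=-\sum_{k=2}^{n}a_{1k}\cup a_{k,n+1}=:-Z$. As noted in the definition, $Z$ is a continuous $2$-cocycle, and a continuous $1$-cochain $b$ with $\delta b=-Z$ exists if and only if $[Z]=0$ in $H^2(G,\F_p)$, i.e.\ if and only if $\langle\alpha_1,\dots,\alpha_n\rangle_M=0$. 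Since $Z_{n+1}(\F_p)\cong\F_p$ is central in $\U_{n+1}(\F_p)$ with quotient $\bar\U_{n+1}(\F_p)$, such a $\rho$ is exactly the dotted arrow in the diagram; hence the dotted arrow exists iff $\langle\alpha_1,\dots,\alpha_n\rangle_M=0$, which completes the proof.

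The only real care needed is sign/convention bookkeeping: one must check that the coboundary and cup-product conventions in use produce the sign $a_{ij}=-\bar\rho_{ij}$ consistently, and that the defining-system equations for the pairs $(i,j)\neq(1,n+1)$ match the homomorphism equations for $\bar\U_{n+1}$ exactly while the single $(1,n+1)$-equation is precisely the cohomological obstruction. Continuity is not an issue, since $\F_p$ is discrete, $G$ is profinite, and continuous cohomology is computed by continuous cochains, so a coboundary of a continuous cocycle can be taken continuous.
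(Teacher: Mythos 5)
Your proof is correct. Note that the paper itself gives no proof of this statement --- it is quoted verbatim from Dwyer (\cite[Theorem 2.4]{Dwy75}), with only the explicit formula $(\bar\rho_M)_{ij}=-a_{ij}$ recorded afterwards --- and your argument is precisely the standard one behind Dwyer's theorem: expanding $\rho(g)\rho(h)$ entrywise to identify the homomorphism condition with $\delta a_{ij}=\sum_{i<l<j}a_{il}\cup a_{lj}$ under $a_{ij}=-\bar\rho_{ij}$, and reading the single omitted $(1,n+1)$-equation as the cocycle obstruction to lifting through the central extension $0\to\F_p\to\U_{n+1}(\F_p)\to\bar\U_{n+1}(\F_p)\to 1$. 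Your sign bookkeeping agrees with the correspondence the paper states explicitly after the theorem, and your remark on continuity is the right (and only) point of care in the profinite setting.
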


Explicitly, the one-one correspondence in Theorem~\ref{thm:Dwyer} is given by: For a defining system $M=(a_{ij})$ for $\langle \alpha_1,\ldots,\alpha_n\rangle$, $\bar\rho_M\colon G\to \bar{\U}_{n+1}(\F_p)$ is given by letting $(\bar\rho_M)_{ij}=-a_{ij}$.

\begin{cor}
 Let $\rho\colon G\to \U_{n+1}(\F_p)$ be a group homomorphism, then the $n$-fold Massey product $\langle -\rho_{1,2},\ldots,-\rho_{n,n+1}\rangle$ is defined and contains 0.
\end{cor}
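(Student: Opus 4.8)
The plan is to derive this corollary directly from Dwyer's theorem (Theorem~\ref{thm:Dwyer}) together with the explicit description of the correspondence $M\leftrightarrow\bar\rho_M$ given immediately after it. The corollary asks us to show that for an arbitrary group homomorphism $\rho\colon G\to\U_{n+1}(\F_p)$, the Massey product $\langle-\rho_{1,2},\ldots,-\rho_{n,n+1}\rangle$ is both defined and contains $0$; since Dwyer's theorem already packages ``defined'' as ``admits a defining system'' and ``value $0$'' as ``the lift to $\U_{n+1}$ exists,'' the point is just that the given $\rho$ itself provides both pieces of data simultaneously.

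First I would set $\alpha_i := -\rho_{i,i+1}$ for $1\leq i\leq n$, which are elements of $H^1(G,\F_p)$ since each $\rho_{i,i+1}$ is a group homomorphism (as noted in the excerpt). Next I would let $\bar\rho\colon G\to\bar\U_{n+1}(\F_p)$ be the composition of $\rho$ with the quotient map $\U_{n+1}(\F_p)\to\bar\U_{n+1}(\F_p)=\U_{n+1}(\F_p)/Z_{n+1}(\F_p)$. This $\bar\rho$ is a group homomorphism with $(\bar\rho)_{i,i+1}=\rho_{i,i+1}=-\alpha_i$ for all $i$, so by the one-one correspondence in Theorem~\ref{thm:Dwyer} it equals $\bar\rho_M$ for a (unique) defining system $M$ for $\langle\alpha_1,\ldots,\alpha_n\rangle$. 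The mere existence of such a defining system shows $\langle\alpha_1,\ldots,\alpha_n\rangle$ is defined.

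Then I would observe that, by construction, $\bar\rho$ lifts to $\U_{n+1}(\F_p)$ along the quotient map — namely $\rho$ itself is a lift making the relevant triangle commute. By the ``moreover'' part of Theorem~\ref{thm:Dwyer}, the existence of this dotted arrow is equivalent to $\langle\alpha_1,\ldots,\alpha_n\rangle_M=0$ in $H^2(G,\F_p)$. Hence $0=\langle\alpha_1,\ldots,\alpha_n\rangle_M\in\langle\alpha_1,\ldots,\alpha_n\rangle$, i.e. $0\in\langle-\rho_{1,2},\ldots,-\rho_{n,n+1}\rangle$, completing the proof.

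There is essentially no obstacle here: the corollary is a formal unwinding of Dwyer's theorem, and the only mild care needed is to track the sign conventions (so that $(\bar\rho_M)_{i,i+1}=-\alpha_i$ matches $\alpha_i=-\rho_{i,i+1}$) and to use the explicit formula $(\bar\rho_M)_{ij}=-a_{ij}$ only if one wants to exhibit $M$ concretely — which is not even necessary, since the correspondence being a bijection already guarantees $M$ exists.
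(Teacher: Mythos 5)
Your proof is correct and is exactly the intended argument: the paper states this corollary without proof as an immediate consequence of Dwyer's theorem, and your unwinding (compose $\rho$ with the quotient to get $\bar\rho$ with $(\bar\rho)_{i,i+1}=-\alpha_i$, invoke the bijection to get a defining system, and note $\rho$ itself is the required lift) is the same route. The sign bookkeeping is handled correctly.
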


%%%%%%%%%%%%%%%%%%%%%%%%%%%%%%%%%%%%%%%%%%%%%%%%%%%%%%%%%%%%%%%%
%%%%%%%%%%%%%%%%%%%%%%%%%%%%%%%%%%%%%%%%%%%%%%%%%%%%%%%%%%%%%%
%%%%%%%%%%%%%%%%%%%%%%%%%%%%%%%%%%%%%%%%%%%%%%%%%%%%%%%%%%%%%
\section{Vanishing of Massey products over odd rigid fields}
%Let $p$ be a prime number. Let $F$ be a field containing a primitive $p$-th root of unity. Let $\chi:F^\times/{F^\times}^p\to {\rm Hom}({\rm Gal}_F,\F_p)$ be the Kummer homomorphism. In this section we shall prove the following theorem. 
\begin{thm}
\label{thm:p>2}

Let $p$ be a prime number and $n\geq 3$ an integer. Let $F$ be a field. Assume that $F$ contains a primitive $p$-th root of unity if ${\rm char}(F)\not=p$ and assume further that if $p=2$ then $-1$ is in $F^2$.  Let $G$ be the absolute Galois group $G_F$ of $F$ or its maximal pro-$p$ quotient $G_F(p)$. Then for any  $\chi \in H^1(G,\F_p)$, the $n$-fold Massey product $\langle \chi,\ldots,\chi\rangle$  is defined and contains 0.
%Then for any $a\in F^\times$, the $n$-Massey product $\langle \chi_a,\ldots,\chi_a \rangle$ is defined and contains 0.
\end{thm}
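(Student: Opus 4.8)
The plan is to reduce the statement, via Dwyer's theorem, to the existence of a unipotent representation with prescribed superdiagonal, and then to construct such a representation explicitly out of a metabelian pro-$p$ quotient of $G_F$ attached to the cyclotomic--Kummer tower of $a$ together with the matrices of Section~3. First I would make the routine reductions. A continuous homomorphism $G_F\to\U_{n+1}(\F_p)$ automatically factors through $G_F(p)$ (as $\U_{n+1}(\F_p)$ is a $p$-group) and $H^1(G_F,\F_p)=H^1(G_F(p),\F_p)$, so the cases $G=G_F$ and $G=G_F(p)$ coincide and I may take $G=G_F$. If $\Char(F)=p$ then $G_F(p)$ is free pro-$p$ (Artin--Schreier--Witt), hence $H^2(G_F(p),\F_p)=0$ and every Massey product vanishes (this is the Artin--Schreier input). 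If $\chi=0$ the zero defining system works. So assume $\Char(F)\ne p$, $\mu_p\subset F$ and $\chi\ne0$; by Kummer theory $\chi=\chi_a$ with $a\in F^\times\setminus F^{\times p}$. By Theorem~\ref{thm:Dwyer} it now suffices to construct a continuous homomorphism $\rho\colon G_F\to\U_{n+1}(\F_p)$ with $\rho_{i,i+1}=-\chi$ for $1\le i\le n$. (Consistency check, and the base case $n=2$: $\chi\cup\chi=\chi_{-1}\cup\chi=0$, because $-1\in F^{\times p}$ --- automatic for odd $p$ since $-1=(-1)^p$, and assumed for $p=2$ --- so every $2$-fold product forced by a defining system already vanishes.)

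Next I would build the relevant quotient. Put $M=F(\mu_{p^\infty},\sqrt[p^\infty]{a})$ (a compatible system of $p$-power roots of $a$) and let $\hat\Gamma$ be the maximal pro-$p$ quotient of $\Gal(M/F)$; since $F(\sqrt[p]{a})/F$ is a $p$-extension contained in $M$, the character $\chi=\chi_a$ factors through $\hat\Gamma$. By Kummer theory over $F(\mu_{p^\infty})$ one sees that $\hat\Gamma$ is an extension $1\to T\to\hat\Gamma\to S\to1$ in which $T$ (the part fixing $\mu_{p^\infty}$) and $S$ are procyclic pro-$p$ and $S$ acts on $T$ through the cyclotomic character; as $S$ is procyclic pro-$p$, hence free pro-$p$, $H^2(S,T)=0$ and the extension splits, $\hat\Gamma\simeq T\rtimes S=\langle\tau\rangle\rtimes\langle\sigma\rangle$ with $\sigma\tau\sigma^{-1}=\tau^{1+p^k}$. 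The decisive point --- and this is exactly where the hypothesis on $-1$ enters --- is that the conjugation exponent is a principal unit $1+p^k$ with $k\ge1$ (and $k\ge2$ when $p=2$), with no $-1$-factor: for odd $p$, $-1\notin 1+p\Z_p$ is automatic, while for $p=2$ the hypothesis $-1\in F^2$ gives $\mu_4\subset F$, which confines $\Gal(F(\mu_{2^\infty})/F)$ to $1+4\Z_2$. Write $c=\chi(\tau)$ and $d=\chi(\sigma)$ in $\F_p$ (here $T$ is trivial, so $\tau=1$ and $c=0$, precisely when $a$ is a $p^j$-th power in $F(\mu_{p^\infty})$ for every $j$).

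Finally I would assemble $\rho$ from the matrices of Section~3. Choose $S_0$ with $p^{S_0}\ge\max(n,p^k)$, work in $\U_{p^{S_0}+1}(\F_p)$, and afterwards compose with the surjection $\U_{p^{S_0}+1}(\F_p)\to\U_{n+1}(\F_p)$ coming from the action on the first $n+1$ steps of the flag, which preserves the entries in positions $(i,i+1)$, $i\le n$. Let $B=1+X$ be the regular unipotent of Section~3 and let $A$ be the matrix of Lemma~\ref{lem:2}(1) with $ABA^{-1}=B^{1+p^k}$ (with $A=1$ if $p^k>p^{S_0}$). Two properties of $A$ are essential: it has $p$-power order by Proposition~\ref{prop:order}, and $A_{i,i+1}=0$ for all $i$ --- because $(1+X)^{1+p^k}-1=X+X^{p^k}+X^{p^k+1}$ has the same linear part $X$ as $B-1$, so in the recipe of Lemma~\ref{lem:Janusz} the relevant unit is $f(X)=1+X^{p^k-1}+X^{p^k}$, whose lowest non-constant term has degree $p^k-1\ge2$. (It is precisely here that a $-1$-action would be fatal: a conjugator taking $B$ to $B^{-1}=1+X+X^2+\cdots$ would require $f(X)=1+X+\cdots$, with nonzero linear term, hence nonzero superdiagonal, destroying the condition $\rho_{i,i+1}=-\chi$ --- in parallel with the Artin--Schreier restriction on $2$-torsion in absolute Galois groups.) Now set $\rho(\tau)=B^{-c}$ and $\rho(\sigma)=A(1-dX)$ (reading $c$ as an integer). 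Then $\rho$ is a well-defined homomorphism of $\hat\Gamma$: both matrices have $p$-power order while $\langle\tau\rangle,\langle\sigma\rangle$ are procyclic pro-$p$, and $\rho(\sigma)\rho(\tau)\rho(\sigma)^{-1}=(ABA^{-1})^{-c}=B^{-c(1+p^k)}=\rho(\tau)^{1+p^k}$ since $1-dX$ commutes with $B$; moreover its superdiagonal reads $(B^{-c})_{i,i+1}=-c=-\chi(\tau)$ and $(A(1-dX))_{i,i+1}=-d=-\chi(\sigma)$, so $\rho_{i,i+1}=-\chi$ throughout $\hat\Gamma$. Composing $G_F\twoheadrightarrow\hat\Gamma\xrightarrow{\rho}\U_{p^{S_0}+1}(\F_p)\to\U_{n+1}(\F_p)$ yields the required representation, and Theorem~\ref{thm:Dwyer} completes the argument.

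I expect the main obstacle to be the construction of $\rho$, and inside it the structural input that $\chi_a$ is already visible on a metabelian pro-$p$ quotient $\langle\tau\rangle\rtimes\langle\sigma\rangle$ whose conjugation action is by a principal unit $1+p^k$ rather than by $-1$ (or $-(1+2^k)$); the hypothesis $-1\in F^2$ for $p=2$ is exactly the condition that rules out the bad actions, and without it the natural conjugator matrix has nonzero superdiagonal. The remaining points --- the vanishing of the superdiagonal of $A$, and matching the needed size $n+1$ with the size $p^{S_0}+1$ for which Section~3 is stated (handled by the flag-truncation surjection) --- are elementary.
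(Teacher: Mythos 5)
Your argument is correct in substance but genuinely reorganizes the proof, so a comparison is worthwhile. The paper reduces to $n=p^s$, passes to the finite extension $F(\zeta_{p^{s+1}},a^{1/p^{s+1}})$, and splits into three cases according to whether $\zeta_{p^{s+1}}\in F$ and whether $X^{p^{s+1}}-a$ is irreducible over $F(\zeta_{p^{s+1}})$; the reducible case is disposed of via Lang's irreducibility criterion (this is one place where $-1\in F^2$ enters for $p=2$, through the $-4F(\zeta_{2^{s+1}})^4$ alternative), after which $a$ is a root of unity modulo $p$-th powers and a cyclic cyclotomic extension suffices. You instead pass once and for all to the pro-$p$ group $\Gal\bigl(F(\mu_{p^\infty},a^{1/p^\infty})/F\bigr)\simeq T\rtimes S$ and write down a single representation $\rho(\tau)=B^{-c}$, $\rho(\sigma)=A(1-dX)$ covering all cases; the twist by $1-dX$ lets you realize an arbitrary value $d=\chi(\sigma)$ on the superdiagonal, where the paper instead chooses its generators so that $\chi_a(\sigma)=0$. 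Your route makes the role of the hypothesis $-1\in F^2$ conceptually transparent (it confines the cyclotomic image to $1+4\Z_2$, excluding the $-1$ and $-(1+2^k)$ actions whose conjugating matrices from Lemmas~\ref{lem:2} and~\ref{lem:3} have nonzero superdiagonal) and avoids Lang's theorem and the irreducibility case split; the paper's route stays entirely with finite extensions and needs no structure theory of the infinite tower. Both proofs consume the same Section~3 input: $A$, $B$, Proposition~\ref{prop:order}, and the vanishing of the superdiagonal of $A$ because $f(X)=1+X^{p^k-1}+X^{p^k}$ has no linear term.

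One step is under-justified as written. To see that $\rho(\tau)=B^{-c}$ defines a homomorphism you argue only that $B^{-c}$ has $p$-power order and $\langle\tau\rangle$ is procyclic pro-$p$; but a procyclic pro-$p$ group may be finite, say of order $p^e$, in which case the image of its generator must have order dividing $p^e$, which $B^{-c}$ (of order $p^{S_0+1}$ when $p\nmid c$) need not. The argument survives because $T=\Gal\bigl(M/F(\mu_{p^\infty})\bigr)$ is in fact either trivial or isomorphic to $\Z_p$, but this requires proof: if $T$ were finite of order $p^e\geq p$ with generator $\gamma$, the injectivity of the Kummer pairing would give $\gamma(a^{1/p^m})=\zeta\, a^{1/p^m}$ with $\zeta$ of order exactly $p^e$ for every large $m$, so $\gamma$ would fix $a^{1/p^j}=(a^{1/p^m})^{p^{m-j}}$ for all $j\leq m-e$; letting $m\to\infty$ puts every $p$-power root of $a$ into $F(\mu_{p^\infty})$ and forces $T=1$, a contradiction. (Torsion-freeness of $S$ is automatic from $\mu_p\subset F$, resp.\ $\mu_4\subset F$, which places $S$ inside $1+p\Z_p$, resp.\ $1+4\Z_2$.) A smaller point of the same nature: the cyclotomic character sends a topological generator of $S$ to some element of $(1+p^k\Z_p)\setminus(1+p^{k+1}\Z_p)$, not necessarily to $1+p^k$ itself; since $1+p^k$ topologically generates $1+p^k\Z_p$ you may choose $\sigma$ with image exactly $1+p^k$ (or replace $A$ by a suitable power), but this normalization should be stated before invoking Lemma~\ref{lem:2}.
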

It is enough to consider the case $G=G_F(p)$. Also if ${\rm char}(F)=p$ then by a result of Witt (\cite{Wi}, \cite[Chapter II, \S2, Corollary 1]{Se1}), we know that $G_F(p)$ is a free pro-$p$-group. Hence, $\langle \chi,\ldots,\chi\rangle=0$. 

So we may assume that ${\rm char}(F)\not=p$, and let us fix a primitive $p$-th root of unity $\xi$. 
Then $\chi=\chi_a$ for some $a\in F^\times$, 
where $\chi_a\in H^1(G_F,\F_p)= H^1(G_F(p),\F_p)$ is  the  character associating to $a$ via the Kummer map $F^\times\to H^1(G_F,\F_p)=H^1(G_F(p),\F_p)$.

Also it suffices to consider the case $n=p^s$ for some integer $s$ since if the $m$-fold Massey product $\langle \chi_a,\ldots,\chi_a\rangle$ is defined, then   for all $2\leq n<m$, all $n$-fold Massey products $\langle\chi_a,\ldots,\chi_a\rangle$ are defined and contain 0. Also we can assume $a\not \in F^p$.

 For each integer $r\geq 1$, we choose a primitive $p^r$-th root of unity $\zeta_{p^s}$ in the way that  $\zeta_{p^{r+1}}^p=\zeta_{p^r}$.
 
%We first treat the case $p>2$.
\begin{proof} 
{\bf Case 0}: First we assume that $\zeta_{p^{s+1}}$ is in $F$. Then the polynomial  $X^{p^{s+1}}-a$ is irreducible over $F$ since $a \not\in (F^\times)^p$. Let $L=F(a^{1/p^{s+1}})$. Then $L/F$ is a cyclic extension of order $p^{s+1}$ whose Galois group generated by $\sigma$, where $\sigma$ defined by $\sigma(a^{1/p^{s+1}})=\zeta_{p^{s+1}}a^{1/p^{s+1}}$. We define a representation $\varphi$ from ${\rm Gal}(L/F)\to \U_{p^s+1}(\F_p)$ by $\varphi(\sigma)=B=1+X$, where $X$ is the matrix as in the beginning of Section 3. Then $\varphi$ is well-defined  since $B^{p^{s+1}}=1$. (In fact $\varphi$ is an isomorphism to its image.)  
Let $\rho:{\rm Gal}(F)\to \U_{p^s+1}(\F_p)$ be the composite ${\rm Gal}(F) \twoheadrightarrow {\rm Gal}(L/F)\stackrel{\varphi}{\to} \U_{p^s+1}(\F_p)$.
We claim that $\rho_{i,i+1}=\chi_a$, for $i=1,\ldots,p^s$. 
Indeed, since both maps $\rho$ and $\chi_a$ factor through the quotient ${\rm Gal}(L/F)$, it suffices to check on the generator $\sigma$ of ${\rm Gal}(L/F)$. 
Since $\sigma(a^{1/p})= \sigma(a^{1/p^{s+1}})^{p^s}=a^{1/p}\zeta_p$, one has
\[
 \varphi_{i,i+1}(\sigma)=1=\chi_a(\sigma).
\]
Therefore $\rho_{i,i+1}=\chi_a$. It implies that  $\langle -\chi_a,\ldots,-\chi_a\rangle$ contains 0 and hence $\langle \chi_a,\ldots,\chi_a\rangle$ also contains 0.
\\
\\
From now on, we assume that $\zeta_{p^{s+1}}$ is not in $F$. Let $p^k$ be the largest index such that $\zeta_{p^k}$ is in $F$, $1\leq k\leq s$. 
\\
\\ 
{\bf Case 1:} Assume that the polynomial $X^{p^{s+1}}-a$ is irreducible over $F(\zeta_{p^{s+1}})$. Then the extension $L:=F(\zeta_{p^{s+1}}, a^{1/p^{s+1}})$ is Galois over $F$. 
 Define two automorphisms $\sigma,\tau\in {\rm Gal}(L/F)$ as follows: 
\[ 
\begin{aligned}
\tau\colon a^{1/p^{s+1}}\mapsto \zeta_{p^{s+1}} a^{1/p^{s+1}} \text{ and }\tau\colon \zeta_{p^{s+1}} \mapsto \zeta_{p^{s+1}};\\
\sigma\colon \zeta_{p^{s+1}}\mapsto \zeta_{p^{s+1}}^{1+p^k} \text{ and } \sigma\colon a^{1/p^{s+1}} \mapsto a^{1/p^{s+1}}.
\end{aligned}
\]
Then we have 
\[
\begin{aligned}
&\sigma\tau\sigma^{-1}(\zeta_{p^{s+1}})=\zeta_{p^{s+1}};\\
&\sigma\tau\sigma^{-1}(a^{1/p^{s+1}})=\sigma\tau(a^{1/p^{s+1}})= \sigma (\zeta_{p^{s+1}}) a^{1/p^{s+1}}=\zeta_{p^{s+1}}^{1+p^k} a^{1/p^{s+1}}=\tau^{1+p^k}(a^{1/p^{s+1}}).
\end{aligned}
\]
Therefore $\sigma\tau\sigma^{-1}=\tau^{1+p^k}$ (*). 
In the group ${\rm Gal}(L/F)$, the order of $\tau$  is $p^{s+1}$ and that of $\sigma$ is $p^{s+1-k}$ and the group ${\rm Gal}(L/F)$ is presented as
\[
{\rm Gal}(L/F)=\big \langle \sigma, \tau\mid \tau^{p^{s+1}}=\sigma^{p^{s+1-k}}=1, \sigma\tau\sigma^{-1}=\tau^{1+p^k}\big \rangle. 
\]
Let $B=1+X$ and $A$ in $\U_{p^s+1}(\F_p)$ be the matrices as in Lemma~\ref{lem:2} so that $ABA^{-1}=B^{1+p^k}$ (*).
We define a homomorphism $\varphi$ from ${\rm Gal}(L/F)$ to $\U_{p^s+1}(\F_p)$ by letting $\varphi(\tau)=B$ and $\varphi(\sigma)=A$. Then $\varphi$ is indeed well-defined because $(*)$ holds, ${\rm ord}(B)=p^{s+1}$ and ${\rm ord}(A)=p^{s+1-k}$ by Proposition~\ref{prop:order}.
 (In fact, $\varphi$ is an isomorphism to its image.) Let $\rho:{\rm Gal}(F)\to \U_{p^s+1}(\F_p)$ be the composite ${\rm Gal}(F) \twoheadrightarrow {\rm Gal}(L/F)\stackrel{\varphi}{\to} \U_{p^s+1}(\F_p)$.
We claim that $\rho_{i,i+1}=\chi_a$, for each $i=1,\ldots, p^s$. 
Indeed, since both maps $\rho$ and $\chi_a$ factor through the quotient ${\rm Gal}(L/F)$, it suffices to check on the generators of ${\rm Gal}(L/F)$. 
Since $\tau(a^{1/p})=(\tau(a^{1/p^{s+1}}))^{p^s}=\zeta_{p} a^{1/p}$, we get
\[
 \varphi_{i,i+1}(\tau)=1=\chi_a(\tau).
\]
On the other hand, since $\sigma(a^{1/p})=a^{1/p}$ and the nearby diagonal entries of the matrix $A$ are 0, we get
\[
 \varphi_{i,i+1}(\sigma)=0=\chi_a(\sigma).
\]
Therefore $\rho_{i,i+1}=\chi_a$. This implies that  $\langle -\chi_a,\ldots,-\chi_a\rangle$ contains 0 and hence $\langle \chi_a,\ldots,\chi_a\rangle$ also contains 0.
\\
\\
{\bf Case 2:} Now assume that  $X^{p^{s+1}}-a$ is reducible over $F(\zeta_{p^{s+1}})$. Then this implies that $a$ is in $F(\zeta_{p^{s+1}})^p$ if $p>2$ and it is in $-4F(\zeta_{2^{s+1}})^4\subseteq F(\zeta_{2^{s+1}})^2$ if $p=2$ by \cite[Chapter VI, \S 9, Theorem 9.1]{Lan}. So in any case, we always have $a$  in $F(\zeta_{p^{s+1}})^p$. 
 Since $F(\zeta_{p^{s+1}})/F$ is a cyclic extension (of degree $p^{s+1-k}$),  a $p$-th root $a^{1/p}$ of $a$ has to be in $F(\zeta_{p^{k+1}})$. 
By Kummer theory, we get  $a\in \langle [\zeta_{p^{k}}]\rangle\subseteq F^\times/{F^\times}^p$.  
Thus $a=\zeta_{p^{k}}^m\bmod {F^\times}^p$ for some $0<m<p$. 
Hence $\chi_a=m \chi_{\zeta_{p^k}}$. By the linearity of Massey products (see e.g. \cite[Lemma 6.2.4]{Fe}), to show $\langle \chi_a,\ldots,\chi_a\rangle$ is defined and contains 0,  it suffices to show that $\langle \chi_{\zeta_{p^k}},\ldots, \chi_{\zeta_{p^k}}\rangle$ is defined and contains 0, i.e., we can assume from the beginning that $a=\zeta_{p^k}$. 

Now let $L=F(\zeta_{p^{s+1+k}})$. Then $L/F$ is a cyclic extension of order $p^{s+1}$ whose Galois group is generated by $\sigma$, where $\sigma(\zeta_{p^{s+1+k}})=\zeta_{p^{s+1}}.\zeta_{p^{s+1+k}}=\zeta_{p^{s+1+k}}^{1+p^k}$. We define a representation $\varphi$ from ${\rm Gal}(L/F)\to \U_{p^{s+1}}(\F_p)$ by $\varphi(\sigma)=B=1+X$. Then $\varphi$ is well-defined.  
Let $\rho:{\rm Gal}(F)\to \U_{p^s+1}(\F_p)$ be the composite ${\rm Gal}(F) \twoheadrightarrow {\rm Gal}(L/F)\stackrel{\varphi}{\to} \U_{p^s+1}(\F_p)$.
We claim that $\rho_{i,i+1}=\chi_a$, for each $i=1,\ldots,p^s$. 
Indeed, since both maps $\rho$ and $\chi_a$ factor through the quotient ${\rm Gal}(L/F)$, it suffices to check on the generator $\sigma$ of ${\rm Gal}(L/F)$. 
Since $\sigma(a^{1/p})= \sigma(\zeta_{p^{s+1+k}}^{p^s})=(\zeta_{p^{s+1}}.\zeta_{p^{s+1+k}})^{p^s}=a^{1/p}\zeta_p$, one has
\[
 \varphi_{i,i+1}(\sigma)=1=\chi_a(\sigma).
\]
Therefore $\rho_{i,i+1}=\chi_a$. This implies that  $\langle -\chi_a,\ldots,-\chi_a\rangle$ contains 0 and hence $\langle \chi_a,\ldots,\chi_a\rangle$ also contains 0.
\end{proof}
\begin{ex}
\label{ex:p-cyclic}
 Let $p$ be a prime number and $G=\Z/p\Z$. Let $\chi\in H^1(G,\F_p)$ be the identity map. In \cite[Example 4.6]{MT1}, it is shown that if $p>2$ then the $p$-fold Massey product $\langle \chi,\ldots,\chi\rangle$ is defined but does not contain 0. 

Now assume that $p=2$, we claim that the $4$-fold Massey product $\langle\chi,\chi,\chi,\chi\rangle$ is not defined. 
For a contradiction, suppose that the $4$-fold Massey product $\langle\chi,\ldots,\chi\rangle$ is defined, then there exists a representation $\rho\colon G \to \bar{\U}_{5}(\F_2)$ such that $\rho_{i,i+1}=\chi$, for $i=1,\ldots,4$. Let $\bar{B}:=\rho(\bar{1})\in \bar{\U}_{5}(\F_2)$. Then all entries of $\bar{B}$ at the positions $(i,i+1)$, $i=1,\ldots, 4$, are equal to 1. Hence $B^2\not=1$, this contradicts with the fact that $\bar{B}$ is the image of an element of order $2$.
\end{ex}
\begin{rmk} Theorem~\ref{thm:p>2} is a generalization of \cite[Proposition 4.5]{MT1}. In \cite{MT1} we use the latter result to provide   an explanation to a part of the well-known Artin-Schreier's theorem \cite{AS1,AS2}  (respectively, Becker's theorem \cite{Be}) which says that the absolute Galois group $G_F$ (respectively, its maximal pro-$p$-quotient $G_F(p)$) of any field $F$ cannot have an element of odd prime order. 

We now use Theorem~\ref{thm:p>2} to give an explanation to the full   Artin-Schreier theorem (respectively, Becker's theorem) which  further says that $G_F$ (respectively, $G_F(2)$) cannot have elements of finite order greater than 2. In fact, it suffices to show that the absolute Galois group $G_F={\rm Gal}(F_{\rm sep}/F)$ cannot be of order 4. First note that Theorem~\ref{thm:p>2} and Example~\ref{ex:p-cyclic} imply that for any field $L$ if $G_L\simeq \Z/2\Z$ then $-1\not\in L^2$. (Not suprisingly $L^2$ means the set of all squares in $L$.)

Now suppose that  $|G_F|=4$.  Let $H$ be a subgroup of $G_F$ of order 2 and let $K$ be the fixed field $(F_{\rm sep})^H$. Then $G_K \simeq \Z/2\Z$.  Thus $-1$ is not in $K^2$, in particular $-1$ is not in $F^2$. Let $ L:= F(\sqrt{-1})$. Then ${\rm Gal}(L/F)$ is of order 2. Therefore $G_L\simeq \Z/2\Z$, hence  $-1\not\in L^2$, which is impossible.
\end{rmk}

 For $1\leq k\leq s$, we define the group
\[M_{p,k,s}= \big \langle \sigma, \tau\mid \tau^{p^{s+1}}=\sigma^{p^{s+1-k}}=1, \sigma\tau\sigma^{-1}=\tau^{1+p^k}\big \rangle\simeq \Z/p^{s+1}\Z \rtimes \Z/p^{s+1-k}\Z.
\]
We have a natural epimorphism $\lambda\colon M_{p,k,s}\to \Z/p^s\Z$ defined by $\tau\mapsto \bar{1}$, $\sigma\mapsto \bar{0}$. 
Note that when $k=s=1$, $M_{p,1,1}\simeq M_{p^3}$.  The following result is a corollary of the proof of Theorem~\ref{thm:p>2}. This result is also a generalization of \cite[Proposition 10.2]{EM1}.
\begin{cor}
\label{cor:automatic realization}
Let $p>2$  be an odd prime and $s\geq 1$ an integer. Let $G=G_F$ or $G_F(p)$ for a field $F$ containing a primitive $p$-th root of unity. Let $\psi:G\to \Z/p^s\Z$ be an epimorphism. Then 
\begin{enumerate} 
\item If $F$ contains a primitive $p^{s+1}$-th root of unity, then $\psi$ factors through the natural map $\Z/p^{s+1}\to \Z/p^s$.
\item If $F$ does not contain a primitive $p^{s+1}$-th root of unity, then $\psi$ factors through one of the epimorphisms:
\begin{enumerate}
\item the natural map $\Z/p^{s+1}\to \Z/p^s$;
\item the map $\lambda\colon M_{p,k,s}\to \Z/p^s\Z$ defined by $\tau\mapsto \bar{1}$, $\sigma\mapsto \bar{0}$, where $k$ is the largest integer such that $F$ contains a primitive $p^k$-th root of unity.
\end{enumerate}
\end{enumerate}
\end{cor}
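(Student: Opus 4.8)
The plan is to read the statement off the trichotomy in the proof of Theorem~\ref{thm:p>2}, applied to the mod-$p$ reduction of $\psi$, and then to upgrade that analysis from a degree-$p$ character to $\psi$ itself. As there, it is enough to treat $G=G_F(p)$. Since $\psi$ is onto, its composite with the reduction $\Z/p^s\Z\to\F_p$ is an epimorphism $\bar\psi\colon G\to\F_p$, so $\bar\psi=\chi_a$ for some $a\in F^\times\setminus(F^\times)^p$; let $E$ be the fixed field of $\ker\psi$, a cyclic extension of $F$ of degree $p^s$ whose unique degree-$p$ subextension is $F(a^{1/p})$. What must be shown is that $\psi$ is the composite of an epimorphism $G\twoheadrightarrow\Z/p^{s+1}\Z$ with the natural map, or of an epimorphism $G\twoheadrightarrow M_{p,k,s}$ with $\lambda$ — equivalently, that $E$ embeds into a Galois extension of $F$ with one of these two groups in the prescribed way.

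Suppose first $\zeta_{p^{s+1}}\in F$, the analogue of Case~0 of the proof of Theorem~\ref{thm:p>2}. Then $\zeta_{p^s}\in F$, so by Kummer theory $E=F(c^{1/p^s})$ for some $c\in F^\times$, necessarily with $c\notin(F^\times)^p$. Since $p>2$, the polynomial $X^{p^{s+1}}-c$ is irreducible over $F$, so $E':=F(c^{1/p^{s+1}})$ is a cyclic extension of $F$ of degree $p^{s+1}$ containing $E$, and restriction identifies $\Gal(E'/F)\to\Gal(E/F)$ with the natural surjection $\Z/p^{s+1}\Z\to\Z/p^s\Z$; composing $G\twoheadrightarrow\Gal(E'/F)$ with it proves (1).

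Now assume $\zeta_{p^{s+1}}\notin F$, and let $k\ge1$ be largest with $\zeta_{p^k}\in F$, so $k\le s$. The essential case is $k=s$ (that is, $\zeta_{p^s}\in F$ but $\zeta_{p^{s+1}}\notin F$), the case in which $\lambda$ is defined and in which the construction from Case~1 of the proof of Theorem~\ref{thm:p>2} is reused. Here $\Z/p^s\Z\cong\mu_{p^s}$ as $G$-modules, so by Kummer theory $\psi$ may be identified with a cocycle $g\mapsto g(c^{1/p^s})/c^{1/p^s}$ for some $c\in F^\times$; choosing a compatible $p^{s+1}$-th root $c^{1/p^{s+1}}$ produces a cocycle $\tilde\psi\colon g\mapsto g(c^{1/p^{s+1}})/c^{1/p^{s+1}}\in\mu_{p^{s+1}}$ whose $p$-th power is $\psi$. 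Because $\zeta_{p^s}\in F$ but $\zeta_{p^{s+1}}\notin F$, the cyclotomic character $\omega\colon G\to(\Z/p^{s+1}\Z)^\times$ (defined by $g(\zeta_{p^{s+1}})=\zeta_{p^{s+1}}^{\omega(g)}$) has image the order-$p$ subgroup $1+p^s\Z/p^{s+1}\Z$ acting on $\mu_{p^{s+1}}$, and $\tilde\psi$ is a $1$-cocycle for that action; hence $\phi:=(\tilde\psi,\omega)$ is a homomorphism $G\to\mu_{p^{s+1}}\rtimes(1+p^s\Z/p^{s+1}\Z)$. Identifying this semidirect product with $M_{p,s,s}$ via $\tau\leftrightarrow\zeta_{p^{s+1}}$ and $\sigma\leftrightarrow 1+p^s$ (the relation $\sigma\tau\sigma^{-1}=\tau^{1+p^s}$ being precisely the displayed action, and the orders matching), one checks from the definitions that $\lambda\circ\phi=\psi$; this gives (2)(b). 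This $\phi$ is the direct counterpart of the homomorphism $\varphi\colon\Gal(L/F)\cong M_{p,k,s}\to\U_{p^s+1}(\F_p)$ attached to $L=F(\zeta_{p^{s+1}},a^{1/p^{s+1}})$ in Case~1 of the proof of Theorem~\ref{thm:p>2}, and it subsumes the degenerate situation treated by the reduction to powers of $\zeta_{p^k}$ in Case~2 of that proof.

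It remains to treat $k<s$ (still with $\zeta_{p^{s+1}}\notin F$), where $\lambda$ is not defined and $\psi$ must land in (2)(a); concretely, one must show that every epimorphism $G\to\Z/p^s\Z$ lifts along $\Z/p^{s+1}\Z\to\Z/p^s\Z$. Since $\zeta_{p^k}\in F$, the reduction $\psi\bmod p^k$ is again of Kummer type and — because $s\ge k+1$ — already lifts to the genuine character $\psi\bmod p^{k+1}$; I would combine this with the compatibilities among the Bockstein obstructions attached to the tower $\Z/p^{s+1}\Z\to\Z/p^s\Z\to\cdots\to\Z/p^{k+1}\Z\to\Z/p^k\Z$, which over a field containing $\zeta_{p^k}$ but not $\zeta_{p^{k+1}}$ are controlled by cup products with the class of $\zeta_{p^k}$, to conclude that the obstruction to lifting $\psi$ one more step vanishes. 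Carrying out this last bookkeeping — and, in the same spirit, the general passage from the degree-$p$ character $\chi_a$ of the proof of Theorem~\ref{thm:p>2} to the full character $\psi$ — is the step I expect to be the main obstacle; everything else is a transcription of the three cases of that proof.
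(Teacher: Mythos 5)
Your treatment of case (1) and of case (2) when $k=s$ is correct, and it is essentially the intended argument: you are transcribing Cases 0 and 1 of the proof of Theorem~\ref{thm:p>2} from the single character $\chi_a$ to the full epimorphism $\psi$ via Kummer theory (your homomorphism $\phi=(\tilde\psi,\omega)$ into $\mu_{p^{s+1}}\rtimes(1+p^s\Z/p^{s+1}\Z)\cong M_{p,s,s}$ is exactly the map to $\Gal\bigl(F(\zeta_{p^{s+1}},c^{1/p^{s+1}})/F\bigr)$ in different clothing), which is all the paper offers by way of proof, since it only asserts that the statement is ``a corollary of the proof'' of that theorem.

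The genuine gap is the case $1\le k<s$, which you correctly isolate as the main obstacle and leave open; the Bockstein bookkeeping you propose there cannot be completed, because what you would need to prove in that case is false. As you note, $\lambda$ is a homomorphism only when $1\equiv 1+p^k\bmod p^s$, i.e.\ only when $k=s$ (and for $2\le k\le s-1$ the abelianization of $M_{p,k,s}$ is $\Z/p^k\Z\times\Z/p^{s+1-k}\Z$, which has no quotient isomorphic to $\Z/p^s\Z$ at all), so for $k<s$ option (2)(b) is unavailable and the corollary would force every epimorphism onto $\Z/p^s\Z$ to lift through $\Z/p^{s+1}\Z\to\Z/p^s\Z$. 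This fails: take $p=3$, $s=2$, $F=\Q_{13}$, so that $\zeta_3\in F$, $\zeta_9\notin F$, $k=1$, and $G_F(3)=\langle x,y\mid yxy^{-1}=x^{4}\rangle$ is the Demushkin group of the Remark in Section 5. The assignment $\psi(x)=\bar 3$, $\psi(y)=\bar 1$ defines an epimorphism onto $\Z/9\Z$ (the relation only imposes $3\psi(x)=0$), but a lift $\tilde\psi$ to $\Z/27\Z$ would need $3\tilde\psi(x)=0$, i.e.\ $\tilde\psi(x)\in\{0,9,18\}$, none of which reduces to $\bar 3$ modulo $9$. So the corollary, as printed, is really only a consequence of the proof of Theorem~\ref{thm:p>2} when $\zeta_{p^{s+1}}\in F$ or when $k=s$ (the latter containing the case $s=1$, i.e.\ \cite[Proposition 10.2]{EM1} that it is said to generalize); your write-up is correct precisely on that range, and no argument can extend it to $k<s$ without changing the statement.
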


\begin{thm}
\label{thm:Massey rigid}
 Let $n\geq 3$ be an integer and let $p$ be an odd prime number. Let $F$ be a $p$-rigid field, which contains a primitive $p$-th root of unity. Then for any $\alpha_1,\ldots,\alpha_n\in H^1(G_F,\F_p)$, the $n$-fold Massey product $\langle \alpha_1,\ldots,\alpha_n\rangle$ contains 0 whenever it is defined.
\end{thm}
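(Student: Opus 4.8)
The plan is to reduce the $n$-fold product $\langle\alpha_1,\dots,\alpha_n\rangle$ to the diagonal product $\langle\chi,\dots,\chi\rangle$ already handled in Theorem~\ref{thm:p>2}, using $p$-rigidity of $F$ to force all the $\alpha_i$ to be proportional to a single class. So assume $\langle\alpha_1,\dots,\alpha_n\rangle$ is defined, say by a defining system $M=(a_{ij})$. Specialising condition~(2) in the definition of a defining system to $j=i+2$ gives $\alpha_i\cup\alpha_{i+1}=[a_{i,i+1}\cup a_{i+1,i+2}]=[\delta a_{i,i+2}]=0$ in $H^2(G_F,\F_p)$ for $i=1,\dots,n-1$. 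Suppose first that every $\alpha_i$ is nonzero. By the cup-product characterisation of $p$-rigidity, $\alpha_i$ and $\alpha_{i+1}$ are $\F_p$-linearly dependent for each $i$, and since they are nonzero this forces $\alpha_{i+1}=c_{i+1}\alpha_i$ with $c_{i+1}\in\F_p^\times$; iterating, $\alpha_i=d_i\alpha_1$ with $d_i\in\F_p^\times$ and $d_1=1$. By Theorem~\ref{thm:p>2} applied with $\chi=\alpha_1$ (its extra $p=2$ hypothesis being vacuous since $p$ is odd), the product $\langle\alpha_1,\dots,\alpha_1\rangle$ is defined and contains $0$; rescaling a defining system $(b_{ij})$ of it entrywise by $b_{ij}\mapsto(d_i d_{i+1}\cdots d_{j-1})b_{ij}$ yields, by $\F_p$-bilinearity of the cup product, a defining system of $\langle\alpha_1,\dots,\alpha_n\rangle$ whose value is $(d_1\cdots d_n)$ times the original value (the standard scaling behaviour of Massey products, cf.\ \cite[Lemma 6.2.4]{Fe}). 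Hence $0\in\langle\alpha_1,\dots,\alpha_n\rangle$.

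It remains to treat the case in which $\alpha_j=0$ for some $j$, where rigidity is not needed. Using Theorem~\ref{thm:Dwyer}, fix a homomorphism $\bar\rho\colon G_F\to\bar{\U}_{n+1}(\F_p)$ with $(\bar\rho)_{i,i+1}=-\alpha_i$ for all $i$. Sending a unitriangular matrix to the pair formed by its top-left $j\times j$ block and its bottom-right $(n+1-j)\times(n+1-j)$ block is a group homomorphism $\pi\colon\bar{\U}_{n+1}(\F_p)\to\U_j(\F_p)\times\U_{n+1-j}(\F_p)$ --- these blocks of a product of unitriangular matrices depend only on the corresponding blocks of the factors --- and $\pi$ is split by the block-diagonal embedding $\iota\colon\U_j(\F_p)\times\U_{n+1-j}(\F_p)\hookrightarrow\U_{n+1}(\F_p)$. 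The composite $\rho'=\iota\circ\pi\circ\bar\rho\colon G_F\to\U_{n+1}(\F_p)$ reduces, modulo the centre, to a homomorphism $G_F\to\bar{\U}_{n+1}(\F_p)$ whose $(i,i+1)$-entry equals $-\alpha_i$ for $i\neq j$ and equals $0=-\alpha_j$ for $i=j$, because the entry in position $(j,j+1)$ straddles the two blocks and is killed by $\iota\circ\pi$. Thus $\rho'$ exhibits a defining system for $\langle\alpha_1,\dots,\alpha_n\rangle$ of value $0$, again by Theorem~\ref{thm:Dwyer}, so $0\in\langle\alpha_1,\dots,\alpha_n\rangle$.

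Since Theorem~\ref{thm:p>2} is the only substantial ingredient, the genuine content here is the rigidity step, namely the passage from $\alpha_i\cup\alpha_{i+1}=0$ for all $i$ to the proportionality of all the $\alpha_i$. The point I expect to require the most care in the write-up is the degenerate case in which some $\alpha_i$ vanish; isolating it through the block-cutting construction above seems cleaner than trying to propagate proportionality across the zeros.
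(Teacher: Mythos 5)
Your argument is correct and takes essentially the same route as the paper: rigidity applied to the consecutive cup products $\alpha_i\cup\alpha_{i+1}=0$ forces all the (nonzero) $\alpha_i$ to be proportional, linearity of Massey products reduces everything to the diagonal case, and Theorem~\ref{thm:p>2} finishes. The only divergence is the degenerate case where some $\alpha_j=0$: the paper simply asserts that ``the result follows,'' while your block-diagonal splitting of $\bar{\U}_{n+1}(\F_p)$ gives a correct and explicit justification of that step.
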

\begin{proof} For each $i$, $\alpha_i=\chi_{a_i}$ for some element $a_i\in F^\times$. If one of the $a_i$'s is in $(F^\times)^p$ then the corresponding character $\chi_{a_i}$ is trivial and hence the result follows.

We assume now that all $a_i$'s are not in $(F^\times)^p$. Since $\langle \chi_{a_1},\ldots,\chi_{a_n}\rangle$ is defined, $\chi_{a_1}\cup \chi_{a_2}=0$. This implies that $a_2= a_1^k F^p$ for some $1\leq k<p$ since $F$ is $p$-rigid. Hence $\chi_{a_2}=k\chi_{a_1}$. By the linearity of Massey products (see e.g. \cite[Lemma 6.2.4]{Fe}), it is enough to consider the case $k=1$, i.e., $\chi_{a_2}=\chi_{a_1}$. Similarly, it is enough to consider the case that all $\chi_{a_i}$'s are equal. But in this case  Theorem~\ref{thm:p>2} applies and hence the result follows.
\end{proof}
%%%%%%%%%%%%%%%%%%%%%
\newpage
\appendix
\begin{center}
by Ido Efrat, J\'an Min\'a\v{c} and \NDT.
\end{center}
\vspace*{10pt}

In this appendix we construct, for each $n\geq 3$, examples of pro-$p$ groups
which do not satisfy the kernel $n$-unipotent property.
These examples are not realizable as maximal pro-$p$ Galois groups of fields containing a root of unity of order $p$.
They also show that several other related results on the structure of such maximal pro-$p$ Galois groups are not valid for general torsion-free pro-$p$ groups (see the remarks below).

Let us keep the notation being as in Section~\ref{sec:6}.
We fix an integer $N\geq1$.
Let $S$ be a free pro-$p$ group on generators  $x_1,\ldots,x_N$.
Let $R_0$ be the closed normal subgroup of $S$ generated by
\[
r_{ij}=[x_1,x_2][x_i,x_j]^{-1}, \quad 1\leq i<j\leq N,
\]
and let $R$ be any closed normal subgroup of $S$ such that $RS_{(3)}=R_0S_{(3)}$.
Let $G=S/R$ and denote the coset of $x_i$ in $G$ by $\bar x_i$.
We note that $G_{(3)}=RS_{(3)}/R$.
We also fix a set $\sL$ of finite groups, and set
\[
G_\sL=\bigcap_{H\in\sL}\bigcap_\rho\hbox{Ker}(\rho),
\]
where the second intersection is over all group homomorphisms $\rho\colon G\to H$.

\begin{appxprop}
\label{prop A1}
Suppose that $N>|H|$ for every $H\in \sL$.
Then:
\begin{enumerate}
\item[(a)]
$[\bar{x}_1,\bar{x}_2]\not\in G_{(3)}$;
\item[(b)]
$[\bar{x}_1,\bar{x}_2]\in G_\sL$;
\item[(c)]
$G_\sL\not\subseteq G_{(n)}$ for every $n\geq3$.
\end{enumerate}
\end{appxprop}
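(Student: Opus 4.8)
The plan is to deduce (c) formally from (a) and (b), to establish (a) via the Magnus calculus of Lemmas~\ref{lem:0a}--\ref{lem:2a}, and to establish (b) by a pigeonhole argument on the generators $x_1,\dots,x_N$.

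\textbf{Part (a).} Since $G_{(n)}\subseteq G_{(3)}$ for all $n\ge3$, it suffices to show $[\bar x_1,\bar x_2]\notin G_{(3)}=RS_{(3)}/R$, equivalently $[x_1,x_2]\notin RS_{(3)}=R_0S_{(3)}$. I would work in the $\F_p$-vector space $S_{(2)}/S_{(3)}$: by Lemma~\ref{lem:1a}(c) it injects, via the degree-$\le 2$ part of the Magnus expansion, into the space of noncommutative polynomials of degree $2$, and the classes of $[x_i,x_j]$ ($1\le i<j\le N$) are sent to the linearly independent elements $X_iX_j-X_jX_i$ (for $p=2$ one also carries along the classes of $x_i^2$, which only enlarges the target and affects nothing below). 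Modulo $S_{(3)}$ one has $r_{ij}\equiv[x_1,x_2]-[x_i,x_j]$, so the image of $R_0$ in $S_{(2)}/S_{(3)}$ is spanned by the vectors $[x_1,x_2]-[x_i,x_j]$ with $(i,j)\ne(1,2)$; a one-line linear-algebra check shows $[x_1,x_2]$ is not in that span, which gives~(a).

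\textbf{Part (b).} Fix $H\in\sL$ and a homomorphism $\rho\colon G\to H$. Replacing $H$ by $\rho(G)$, I may assume $H$ is a finite $p$-group, $\rho$ is surjective, and $|H|<N$. Lift $\rho$ through $S\twoheadrightarrow G$ to $\phi\colon S\to H$ and put $h_i=\phi(x_i)$. Since $N>|H|$, the pigeonhole principle yields $a<b$ with $h_a=h_b$, hence $\phi([x_a,x_b])=1$ and, because $r_{ab}=[x_1,x_2][x_a,x_b]^{-1}$, $\phi([x_1,x_2])=\phi(r_{ab})$. As $r_{ab}\in R_0\subseteq RS_{(3)}\subseteq(\ker\phi)\,S_{(3)}$, we get $\phi([x_1,x_2])\in\phi(S_{(3)})=H_{(3)}$; applying the same to each $r_{ij}$ shows $[H,H]\subseteq H_{(3)}$. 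For the special choice $R=R_0$ this finishes the proof at once, since then $r_{ab}\in R_0=R\subseteq\ker\phi$ forces $\phi([x_1,x_2])=\phi(r_{ab})=1$. The crux — the step I expect to be the main obstacle — is the passage, for a general admissible $R$, from $\phi([x_1,x_2])\in H_{(3)}$ to $\phi([x_1,x_2])=1$: here one must exploit that $H$ is a finite $p$-group generated by more than $|H|$ elements whose pairwise commutators already lie in $H_{(3)}$, together with $\bigcap_m H_{(m)}=1$, to push $\phi([x_1,x_2])$ arbitrarily far down the $p$-Zassenhaus filtration of $H$ and conclude it is trivial. Granting this, $[\bar x_1,\bar x_2]\in\ker\rho$ for every such $\rho$, that is, $[\bar x_1,\bar x_2]\in G_\sL$.

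\textbf{Part (c).} This is formal: by (b), $[\bar x_1,\bar x_2]\in G_\sL$; by (a), $[\bar x_1,\bar x_2]\notin G_{(3)}\supseteq G_{(n)}$ for every $n\ge3$; hence $G_\sL\not\subseteq G_{(n)}$ for all $n\ge3$.
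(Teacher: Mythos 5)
Your parts (a) and (c) are correct and essentially reproduce the paper's argument: the paper also gets (c) formally from (a) and (b), and proves (a) by the same linear algebra in $S_{(2)}/S_{(3)}$ — it quotes the linear independence of the classes of the $[x_i,x_j]$ from Vogel's thesis instead of rederiving it from the Magnus expansion, and packages the ``not in the span'' check as a functional $\varphi\colon TS_{(3)}/S_{(3)}\to\Z/p\Z$ sending every $[x_i,x_j]$ to $1$, hence every $r_{ij}$ to $0$ but $[x_1,x_2]$ to $1$.

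The problem is part (b), and it sits exactly where you flagged it — but your proposed repair cannot succeed. The paper's proof of (b) is your ``$R=R_0$'' argument run for all admissible $R$: after the pigeonhole step it uses the identity $[\bar x_1,\bar x_2]=[\bar x_i,\bar x_j]$ \emph{in $G$}, which requires $r_{ij}\in R$. In other words the paper tacitly reads the hypothesis on $R$ as including $R_0\subseteq R$ (true in every instance it uses, namely $R=R_0$ and $R=R_0S_3$), and under that reading your pigeonhole argument already finishes the proof verbatim. By contrast, for an $R$ satisfying only $RS_{(3)}=R_0S_{(3)}$ the conclusion of (b) is actually false, so no descent down the Zassenhaus filtration of $H$ can rescue it. Concretely, take $p$ odd, $\sL=\{M_{p^3}\}$, $N>p^3$, and the surjection $\phi\colon S\to H:=M_{p^3}$ with $\phi(x_1)=x$, $\phi(x_2)=y$, $\phi(x_i)=1$ for $i\geq3$. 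One checks $[H,H]=\langle x^p\rangle=H_{(3)}=\phi(S_{(3)})$, so every $\phi(r_{ij})$ lies in $\phi(S_{(3)})$ and one may choose $s_{ij}\in S_{(3)}$ with $\phi(r_{ij}s_{ij})=1$. Letting $R$ be the closed normal subgroup generated by the $r_{ij}s_{ij}$, we get $RS_{(3)}=R_0S_{(3)}$ and $R\subseteq\ker\phi$, yet the induced $\rho\colon G\to M_{p^3}$ satisfies $\rho([\bar x_1,\bar x_2])=[x,y]=x^{-p}\neq1$, so $[\bar x_1,\bar x_2]\notin G_{\sL}$.

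This example also refutes the specific mechanism you hoped to exploit: here $H$ is a finite $p$-group generated by more than $|H|$ elements, all pairwise commutators of the generators lie in $H_{(3)}$, and still $\phi([x_1,x_2])\neq1$; the point is that $H_{(m)}$ can stabilize at a nontrivial subgroup for many steps ($(M_{p^3})_{(m)}=\langle x^p\rangle$ for $2\leq m\leq p$), so knowing $\phi([x_1,x_2])\in H_{(3)}$ gives no traction. The correct move is the paper's: use $r_{ij}\in R$, i.e.\ prove (b) under the intended hypothesis $R_0\subseteq R$ (equivalently $R_0\subseteq R\subseteq R_0S_{(3)}$), after which your own argument closes the case.
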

\begin{proof}
(a) \quad
Let $T$ be the subgroup of $S$ generated by the commutators $[x_i,x_j]$, $1\leq i<j\leq N$.
By \cite[Proposition 1.3.2]{Vo},  the cosets of these commutators in $S_{(2)}/S_{(3)}$ are $\Z/p\Z$-linearly independent.
Therefore we can define a homomorphism $\varphi\colon TS_{(3)}/S_{(3)}\to \Z/p\Z$ by
$\varphi([x_i,x_j]S_{(3)})=1$ for $1\leq i<j\leq N$.
Then $\varphi(r_{ij}\bmod S_{(3)})=0$ for each $1\leq i<j\leq N$,
and hence $RS_{(3)}/S_{(3)}\subseteq \ker(\varphi)$.
On the other hand $[x_1,x_2]\mod S_{(3)}$ is not in $\ker(\varphi)$.
Therefore $[x_1,x_2]\not\in RS_{(3)}$, and hence
$[\bar{x}_1,\bar{x}_2]\not\in G_{(3)}$.

(b) \quad
Let $H\in \sL$ and let $\rho\colon G\to H$ be a group homomorphism.
Since $N>|H| $, there exist $1\leq i<j\leq N$ such that $\rho(\bar{x}_i)=\rho(\bar{x}_j)$.
Since $[\bar{x}_1,\bar{x}_2]=[\bar{x}_i,\bar{x}_j]$ in $G$, we have
$\rho([\bar{x}_1,\bar{x}_2])=\rho([\bar{x}_i,\bar{x}_j])=[\rho(\bar{x}_i),\rho(\bar{x}_j)]=1$, as desired.

\medskip

(c)  \quad
Use (a), (b), and the inclusion $G_{(n)}\subseteq G_{(3)}$.
\end{proof}

\begin{appxrmks}
\begin{enumerate}
\item[(1)]
Proposition \ref{prop A1}(c) with $\sL=\{\U_n(\F_p)\}$ gives $G_{\langle n\rangle}\not\subseteq G^{(n)}$ for $n\ge3$ and
$N>|\U_n(\F_p)|$.
Thus $G$ does not satisfy the kernel $n$-unipotent condition for any $n$ such that $n\ge3$ and $N>p^{(n-1)n/2}$.
\item[(2)]
In particular, suppose that $N>p^3$.
Then $G$ does not satisfy the kernel $3$-unipotent condition.
By \cite[Theorem D]{EM2} (see also \cite[Example 12.2]{Ef}), this implies that $G$ is not realizable as the maximal pro-$p$
Galois group $G_F(p)$ of any field $F$ containing a root of unity of order $p$.
\item[(3)]
Let $S_i$, $G_i$, $i=1,2,\ldots$, be again the lower central filtrations of $S,G$, respectively.
Take $R=R_0S_3$.
Then $RS_{(3)}=R_0S_{(3)}$.
As $R\leq S_2$ we have $G/G_2\cong S/S_2\cong\Z_p^N$.
Also, $G_2=G_2/G_3=\langle[\bar x_1,\bar x_2]\rangle\cong\Z_p$.
Thus in this case $G$ is an extension of a torsion-free group by a torsion-free group, and therefore is also torsion-free.
 \item[(4)]
Let $p>2$, let $\sL=\{\Z/p^2\Z,M_{p^3}\}$, and suppose that $N>p^3$.
Since $G^{(3)}\subseteq G_{(3)}$, Proposition \ref{prop A1}(c) implies that $G_\sL\not\subseteq G^{(3)}$.
In view of (3), this shows that the property of absolute Galois groups given in \cite[Main Theorem]{EM1} does not hold in general for torsion-free pro-$p$ groups.
Similarly, taking $p=2$ and $\sL=\{\Z/2\Z,\Z/4\Z,D_4\}$, we obtain that the property of absolute Galois groups given in  \cite[Corollary 2.18]{MS2} (see also \cite[Corollary 11.3]{EM1}) does not hold in general for torsion-free pro-$2$ groups.
Finally, taking $p>2$ and $\sL=\{\Z/p\Z,H_{p^3}\}$, we obtain the same for \cite[Theorem D]{EM2}.
\item[(5)]
Proposition \ref{prop A1}(c) with $\sL=\{\U_{p+1}(\F_p)\}$ gives $G_{\langle p+1\rangle}\not\subseteq G^{(3)}$.
Therefore the group $G$ provides us an example of a pro-$p$ group not satisfying the
conclusion in Proposition~\ref{prop:smallest}, part (1).

\end{enumerate}
\end{appxrmks}


\begin{thebibliography}{999999999}
\bibitem[AS1]{AS1}  E. Artin and O. Schreier, {\it Algebraische Konstruktion reeller K\"orper}, Abh. Math. Sem. Univ. Hamburg 5 (1927) pp. 85-99. Reprinted in:  Artin's Collected Papers (Eds. S. Lang and J. Tate), Springer-Verlag, New York, 1965,  pp. 258-272.
\bibitem[AS2]{AS2} E. Artin and O. Schreier, {\it Eine Kennzeichnung der reell abgeschlossenen K\"orper}, Abh. Math. Sem. Univ. Hamburg 5 (1927) pp. 225-231. Reprinted in: Artin's Collected Papers (Eds. S. Lang and J. Tate), Springer-Verlag, New York, 1965,  pp. 289-295.
\bibitem[Be]{Be} E. Becker, {\it Euklidische K\"orper und euklidische H\"ullen von K\"orpern}, collection of articles dedicated to
Helmut Hasse on his seventy-fifth birthday, II, J. Reine Angew. Math. 268/269 (1974), 41-52.

\bibitem[BLMS]{BLMS} J. Benson, N. Lemire, J. Min\'a\v{c}  and J. Swallow, {\it Detecting pro-$p$-groups that are not absolute Galois groups}, J. Reine Angew. Math. 613 (2007), 175-191.
\bibitem[CM]{CM} B. Chandler and W. Magnus, {\it The History of Combinatorial Group Theory: A Case Study in the History of Ideas}, Studies in the History of Mathematics and Physical Sciences (1st ed.), Springer, 1982.

\bibitem[CMQ]{CMQ} S. K. Chebolu, J. Min\'a\v{c} and C. Quadrelli, {\it Detecting fast solvablity of equations via small powerfull Galois groups}, arXiv:1310.7623, to appear in Trans. AMS.
\bibitem[DDMS]{DDMS} J. D. Dixon, M. P. F.  Du Sautoy, A. Mann and D. Segal, {\it Analytic pro-p groups}, second edition, Cambridge Studies in Advanced Mathematics, 61, Cambridge University Press, Cambridge, 1999.
\bibitem[Dwy]{Dwy75} W. G. Dwyer, {\it Homology, Massey products and maps between groups}, J. Pure Appl. Algebra 6 (1975), no. 2, 177-190.
\bibitem[Ef1]{Ef} I. Efrat, {\it The Zassenhaus filtration, Massey products, and representations of profinite groups}, Adv. Math. 263 (2014), 389-411.
\bibitem[Ef2]{Ef2} I. Efrat, {\it Filtrations of free groups as intersections}, Arch.  Math. (Basel) 103 (2014), 411-420. 
\bibitem[EM1]{EM1} I. Efrat and J. Min\'a\v{c},  {\it On the descending central sequence of absolute Galois groups}, Amer. J. Math. 133 (2011), no. 6, 1503-1532.
\bibitem[EM2]{EM2} I. Efrat and J. Min\'a\v{c},  {\it Galois groups and cohomological functors}, preprint (2011), available at arXiv:1103.1508v1.
\bibitem[EK]{EK} A. J. Engler and J. Koenigsmann, {\it Abelian subgroups of pro-p Galois groups}, Trans. Amer. Math. Soc. 350 (1998), no. 6, 2473-2485.
\bibitem[Fe]{Fe} R. Fenn, {\it Techniques of Geometric Topology}, London Math. Soc. Lect. Notes 57, Cambridge, 1983.
\bibitem[GLMS]{GLMS} W. Gao, D. Leep, J. Min\'a\v{c} and T. Smith, {\it Galois groups over nonrigid fields}, Valuation theory and its applications, Vol. II (Saskatoon, SK, 1999), 61-77, Fields Inst. Commun., 33, Amer. Math. Soc., Providence, RI, 2003.
\bibitem[Gr]{Gr} O. Gr{\"u}n, {\it {\"U}ber eine Faktorgruppe freier Gruppen I}, Deutsche Mathematik 1 (1936), 772-782.
\bibitem[HW]{HW} M. Hopkins and K. Wickelgren, {\it Splitting varieties for triple Massey products},  to appear in J. Pure Appl. Algebra.
\bibitem[Ja]{Ja} G. J. Janusz, {\it Faithful representations of $p$-groups at characteristic $p$, II},  J. Algebra 22 (1972), 137-160.
\bibitem[JW]{JW} B. Jacob and R.  Ware, {\it A recursive description of the maximal pro-$2$ Galois group via Witt rings},
Math. Z. 200 (1989), no. 3, 379-396. 

\bibitem[Ko1]{Ko1} H. Koch, {\it \"{U}ber die Faktorgruppen einer absteigenden Zentralreihe}, Math. Nach. 22 (1960), 159-161.
\bibitem[Ko2]{Ko} H. Koch, {\it Galois theory of $p$-extensions}, Springer Monographs in Mathematics (2001).
\bibitem[Kr]{Kr} W. Krull, {\it  Galoissche Theorie der unendlichen algebraischen Erweiterungen},  Math. Ann. 100 (1928), no. 1, 687-698.
\bibitem[La]{La} J. Labute, {\it Classification of Demushkin groups}, Can. J. Math. 19 (1966), 106-132. 
\bibitem[Lan]{Lan} S. Lang, {\it Algebra}, third revised edition, Springer-Verlag, New York, 2002.
\bibitem[Laz]{Laz} M. Lazard, {\it Sur les groupes nilpotents et les anneaux de Lie}, Ann. Sci. E. N. S. 71 (1954), 101-190.

\bibitem[Mag1]{Mag1}  W. Magnus, {\it Beziehungen zwischen Gruppen und Idealen in einem speziellen Ring}, Math. Ann. 111 (1935), no. 1, 259-280.

\bibitem[Mag2]{Mag} W. Magnus, {\it {\"U}ber Beziehungen zwischen h{\"o}heren Kommutatoren}, J. Reine Angew. Math. 177 (1937), 105-115.

\bibitem[Mo1]{Mo1} M. Morishita, {\it Milnor invariants and Massey products for prime numbers}, Compositio Math. 140 (2004), 69-83.
\bibitem[Mo2]{Mo} M. Morishita, {\it Knots and primes. An introduction to arithmetic topology}, Universitext. Springer, London, 2012.

\bibitem[MeSu]{MeSu} A. S. Merkurjev and A. A. Suslin, {\it $K$-cohomology of Severi-Brauer varieties and the norm residue homomorphism}, Math. USSR Izv. 21 (1983), 307-340. English translation of Russian original:  Izv. Akad. Nauk SSSR Ser. Mat. 46 (1982), no. 5, 1011-1046, 1135-1136.

\bibitem[MS1]{MS1} J. Min\'a\v{c} and M. Spira, {\it Formally real fields, Pythagorean fields, C-fields and W-groups}, Math. Z. 205 (1990), no. 4, 519-530.
\bibitem[MS2]{MS2} J. Min\'a\v{c} and M. Spira, {\it Witt rings and Galois groups}, Ann. of Math. (2) 144 (1996), no. 1, 35-60.
\bibitem[MT1]{MT1} J. Min\'a\v{c} and N. D. T{\^a}n, {\it Triple Massey products and Galois theory}, arXiv:1307.6624,  to appear in J.  Eur. Math. Soc.
\bibitem[MT2]{MT2} J. Min\'a\v{c} and N. D. T{\^a}n, {\it Triple Massey products over global fields},  preprint (2014), 	arXiv:1403.4586 .
\bibitem[MT3]{MT3} J. Min\'a\v{c} and N. D. T{\^a}n, {\it The Kernel Unipotent Conjecture  in Galois theory}, in preparation.
\bibitem[NSW]{NSW} J. Neukirch, A. Schmidt and K. Wingberg, {\it Cohomology of number fields}, second edition, Grundlehren der Mathematischen Wissenschaften [Fundamental Principles of Mathematical Sciences], 323. Springer-Verlag, Berlin, 2008. 

\bibitem[RZ]{RZ} L. Ribes and P. Zalesskii, {\it Profinite groups},  second edition, Ergebnisse der Mathematik und ihrer Grenzgebiete, 3. Folge, A Series of Modern Surveys in Mathematics [Results in Mathematics and Related Areas, 3rd Series, A Series of Modern Surveys in Mathematics], 40, Springer-Verlag, Berlin, 2010.
\bibitem[Ro]{Ro} P. Roquette, {\it From FLT to finite groups. The remarkable career of Otto Gr{\"u}n}, Jahresbericht der Deutschen Mathematiker-Vereinigung  107 (2005) 117-154.

\bibitem[R{\"o}h]{Roh} F. R{\"o}hl, {\it Review and some critical comments on a paper of Gr{\"u}n concerning the dimension subgroup conjecture}, Bol. Soc. Braz. Mat.
16 (1985), 11-27.
\bibitem[Se1]{Se1} J.-P. Serre, {\it Structures de certain pro-$p$-groups}, S{\'e}m. Bourbaki,  expos{\'e} 252, (1962/63).
\bibitem[Se2]{Se} J.-P. Serre, {\it Galois cohomology}, Corr. 2 printing; Springer 2002 (Springer Monographs in Mathematics).
\bibitem[Sko]{Sko} A. I. Skopin, {\it The factor groups of an upper central series of free groups}(Russsian), Doklady Akad. Nauk SSSR (N.S.) 74 (1950), 425-428.
\bibitem[Vi]{Vi}  F. R. Villegas, {\it Relations between quadratic forms and certain Galois extensions}, a manuscript, Ohio State University, 1988,
http://www.math.utexas.edu/users/villegas/osu.pdf.
\bibitem[Voe]{Voe}  V. Voevodsky, {\it On motivic cohomology with $\Z/l$-coefficients}, Ann. of Math. (2) 174 (2011), no. 1, 401-438. 
\bibitem[Vo]{Vo} D. Vogel, {\it Massey products in Galois cohomology of number fields}, Ph.D thesis, Universit\"{a}t Heidelberg, 2004.
\bibitem[Vo2]{Vo2} D. Vogel, {\it On the Galois group of 2-extensions with restricted ramification}, J. Reine Angew. Math. 581 (2005), 117-150. 
\bibitem[We]{We} A. J. Weir, {\it Sylow $p$-subgroups of the general linear group over finite fields of characteristic $p$}, Proc. Amer. Math. Soc. 6 (1955), no. 4, 454-464.
\bibitem[Wa1]{Wa1} R. Ware, {\it When are Witt rings group rings? II}, Pacific J. Math. 76 (1978), no. 2, 541-564.
 \bibitem[Wa2]{Wa2} R. Ware, {\it  Quadratic forms and pro-2-groups. III. Rigid elements and semidirect products}, Comm. Algebra 13 (1985), no. 8, 1713-1736.
\bibitem[Wa3]{Wa3} R. Ware, {\it Galois groups of maximal p-extensions}, Trans. Amer. Math. Soc. 333 (1992), no. 2, 721-728.
\bibitem[Wi1]{Wi} E. Witt, {\it Konstruktion von galoisschen  K{\"o}rpern der Charakteristik $p$ zu vorgegebener Gruppe der Ordnung $p^f$}, J. Reine. Angew. Math. 174 (1936), 237-245.
\bibitem[Wi2]{Wi2} E. Witt, {\it Treue Darstellung Liescher Ringe},  J. Reine. Angew. Math. 177 (1937), 152-160.
\bibitem[Zas]{Zas} H. Zassenhaus, {\it Ein Verfahren, jeder endlichen $p$-Gruppe einen Lie-Ring mit der Characteristic $p$ zuzuordnen}, Abh. Mat. Sem. Univ. Hamburg 13 (1940) 200-207.

\end{thebibliography}
\end{document}